\newtheorem{theorem}{Theorem}
\newtheorem{claim}{Claim}
\newtheorem{proposition}{Proposition}
\newtheorem{lemma}{Lemma}
\newtheorem{definition}{Definition}
\newtheorem{example}{Example}
\newtheorem{corollary}{Corollary}
\newtheorem{remark}{Remark}
\numberwithin{equation}{section}
\numberwithin{remark}{section}
\numberwithin{theorem}{section}
\numberwithin{proposition}{section}
\numberwithin{definition}{section}
\numberwithin{lemma}{section}
\numberwithin{claim}{section}
\numberwithin{corollary}{section}
\numberwithin{conjecture}{section}
\newcommand{\bull}{\ensuremath{{}\bullet{}}}
\newcommand{\cpn}{\ensuremath{\mathbb{P}^{N}}}
\newcommand{\slnc}{\ensuremath{SL(N+1,\mathbb{C})}}
\newcommand{\dlb}{\ensuremath{\overline{\partial}}}
\newcommand{\dl}{\ensuremath{\partial}}
\newcommand{\ra}{\ensuremath{\longrightarrow}}
\newcommand{\om}{\ensuremath{\omega}}
\newcommand{\vp}{\ensuremath{\varphi}}
\newcommand{\vps}{\ensuremath{\varphi_{\sigma}}}
\newcommand{\xhyp}{\ensuremath{X\times\mathbb{P}^{n-1}}}
\newcommand{\lambull}{\ensuremath{\lambda_{\bull}} }
\newcommand{\mubull}{\ensuremath{\mu_{\bull}} }
\newcommand{\elam}{\ensuremath{\mathbb{E}_{\lambda_{\bull}}}}
\newcommand{\emu}{\ensuremath{\mathbb{E}_{\mu_{\bull}}}}
\begin{document}
\bibliographystyle{alpha}
\title[Stable Pairs ]{Stable Pairs and Coercive Estimates for the Mabuchi Functional}
\author{Sean Timothy Paul}
\email{stpaul@math.wisc.edu}
\address{Mathematics Dept. Univ. of Wisconsin, Madison}
\subjclass[2000]{53C55}
\keywords{Discriminants, Resultants, K-Energy maps, Projective duality, Geometric Invariant Theory .}
\date{August 18, 2013}
 \vspace{-5mm}
\begin{abstract}{ We introduce the concept of a (semi)stable \emph{pair} and establish its numerical criterion. As an application we show that a linearly normal algebraic manifold $X\ra\cpn$ is stable if and only if the Mabuchi energy is proper on  the space $\mathcal{B}$ of Bergman metrics. As a further application we show that as soon as the Mabuchi energy of a polarized K\"ahler manifold $(X ,\mathbb{L})$ is proper on \emph{some} $\mathcal{B}$ the group $\mbox{Aut}(X ,\mathbb{L})$ is finite.}
 \end{abstract}
\maketitle
\setcounter{tocdepth}{1}
\tableofcontents  
 \section{Statement of Results}
Let $X^n\ra \cpn$ be a linearly normal projective variety. Recall that $R=R_X$ is the $X$-\emph{resultant} and $\Delta=\Delta_{\xhyp}$ is the $X$-\emph{hyperdiscriminant}\footnote{See Definitions \ref{cayleyform} , \ref{dual}, and equation \ref{hyper} . Undefined terms are explained below.}.
 
 \begin{definition}\label{cmss} 
\emph{Let $X\ra\cpn$ be an irreducible, $n$-dimensional, linearly normal complex projective variety. Then $X$ is \emph{\textbf {(semi)stable}} if and only if }
\begin{align}
(R^{\ \deg(\Delta)}\ , \ \Delta^{\deg(R)}) 
\end{align}
\emph{is a (semi)stable pair\footnote{See Definitions \ref{semistable} and \ref{stable}.} for the action of $G:=\slnc$.}
\end{definition}

Now we assume that $X\ra \cpn$ is a \emph{smooth} subvariety.  Let $\om_{FS}$ denote the Fubini Study metric on $\cpn$.  As usual $\nu_{\om}$ denotes the K-energy map of the restriction  $\om:={\om_{FS}}|_X$. Let $\mathcal{B}$ denote the Bergman metrics associated to this embedding. 

\begin{theorem}\label{mainresult}
\emph{ $X\ra \cpn$ is \emph{\textbf{stable}} if and only if the Mabuchi energy $\nu_{\om}$ is \emph{\textbf{proper}} on $\mathcal{B}$.}
\end{theorem}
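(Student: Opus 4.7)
The plan is to reduce properness of the Mabuchi energy to the stability condition on the pair $(R^{\deg(\Delta)},\Delta^{\deg(R)})$ by means of an explicit formula for $\nu_{\om}$ along the Bergman orbit. I would identify this orbit with $G/\mathrm{Stab}(X)$ via $\sigma\mapsto \vps$, the K\"ahler potential of $\sigma^{*}\om_{FS}|_X$ relative to $\om$. A key identity (a version of which is due to Tian and refined by the author in earlier work) should give
\begin{align*}
\nu_{\om}(\vps) \;=\; \frac{1}{c(n,X)}\Bigl(\log\|\sigma\cdot \Delta^{\deg(R)}\|^{2} \;-\; \log\|\sigma\cdot R^{\deg(\Delta)}\|^{2}\Bigr) \;+\; O(1),
\end{align*}
uniformly in $\sigma \in G$, where $\|\cdot\|$ is any $SU(N+1)$-invariant Hermitian norm on the ambient representation and $c(n,X)>0$ is a universal constant. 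Granting this, properness of $\nu_{\om}$ on $\mathcal{B}$ becomes equivalent to a coercive lower bound for $\log\|\sigma\cdot \Delta^{\deg(R)}\|^{2}-\log\|\sigma\cdot R^{\deg(\Delta)}\|^{2}$ in terms of a fixed exhaustion function $J(\sigma)$ on $G/\mathrm{Stab}(X)$.

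The second step is to apply the numerical criterion for (semi)stable pairs established earlier in this paper. That criterion asserts that stability of $(R^{\deg(\Delta)},\Delta^{\deg(R)})$ is equivalent to strict positivity of the difference of Hilbert--Mumford weights along every nontrivial one-parameter subgroup $\lambda:\mathbb{C}^{*}\ra G$. Combined with the $KAK$ (Cartan) decomposition of $G=\slnc$, this one-parameter-subgroup strict positivity should upgrade, via a convexity argument along geodesics in the symmetric space $G/K$, to the desired uniform coercive estimate on all of $G$. Conversely, a coercive estimate forces strict positivity of the weight difference on each $\lambda$, hence pair stability; so the two implications close up.

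The principal obstacle is exactly this upgrade from one-parameter-subgroup data to a \emph{uniform} coercive lower bound valid throughout $G$, together with control of the $O(1)$ remainder in the K-energy expansion uniformly as $\sigma\to\infty$ in the Bergman orbit. Reductivity of $G$ reduces the global estimate to behaviour along geodesics in $G/K$, but one must still rule out the possibility that $\log\|\sigma\cdot R^{\deg(\Delta)}\|$ and $\log\|\sigma\cdot \Delta^{\deg(R)}\|$ grow at exactly the same rate along some transcendental direction. Handling this requires a log-convexity and lower-semicontinuity analysis of the two norms in tandem; it is at this step that the precise numerical criterion for \emph{pairs}, rather than a straightforward Hilbert--Mumford criterion for a single vector, is indispensable, since the relevant quantity is the \emph{difference} of two weight functions which are individually only subharmonic.
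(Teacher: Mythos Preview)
Your starting point (Theorem~A, the $O(1)$ comparison of $\nu_\om$ with the log-norm difference) is correct, but two gaps remain. First, stability of $(v,w)$ is \emph{not} ``strict positivity of the weight difference along every nontrivial $\lambda$'': by Definition~\ref{stable} it means the auxiliary pair $(\mathbb{I}^q\otimes v^m,\,w^{m+1})$ is semistable for some $m$, equivalently (Theorem~\ref{summary}) $(m+1)w_\lambda(w)\leq q\,w_\lambda(\mathbb{I})+m\,w_\lambda(v)$ for all $\lambda$, with $q=\deg(\mathbb{V})$. The term $w_\lambda(\mathbb{I})$ is not a perturbation; it is exactly what matches $\log\|\sigma\|_{hs}^2$ on the analytic side. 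Second, you have misidentified the principal obstacle. STABLE $\Rightarrow$ PROPER is the \emph{easy} direction and needs no upgrade from one-parameter subgroups: semistability of the auxiliary pair is, by Proposition~\ref{vwlowerbound} and Corollary~\ref{infi}, already a \emph{global} lower bound on its energy over all of $G$, which unwinds directly to the coercive estimate of Definition~\ref{properenergy}; one then bounds $J_\om$ from above via the elementary inequality $\int_X\vps\,\om^n/d\leq \log\|\sigma\|_{hs}^2+O(1)$. No KAK/convexity argument and no analysis of transcendental directions is performed.

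The actual difficulty in PROPER $\Rightarrow$ STABLE is not the one you name. Properness gives a global inequality involving $J_\om(\vps)$, and by the formula from \cite{gacms} one has, up to a positive factor, $J_\om(\vps)=\deg(\elam)\int_X\vps\,\om^n/d-\log\|\sigma\cdot R^{\deg\Delta}\|^2$. The mean $\int_X\vps\,\om^n$ is \emph{not} a log-norm on any $G$-module, so for general $\sigma$ the properness inequality cannot be rewritten as an energy inequality for a pair. The paper's trick is to restrict the global inequality to one-parameter subgroups: along $\lambda(t)$ the slope of the mean as $|t|\to 0$ is precisely $w_\lambda(\mathbb{I})$, so one reads off $(m+1)\big(w_\lambda(\Delta^{\deg R})-w_\lambda(R^{\deg\Delta})\big)\leq \deg(\elam)\,w_\lambda(\mathbb{I})-w_\lambda(R^{\deg\Delta})$, which is exactly the numerical criterion for stability with exponent $m$. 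Your instinct that restriction to $\lambda$'s closes this direction is right; what is missing is the identification of the mean with $w_\lambda(\mathbb{I})$, which is the reason $\mathbb{I}$ appears in the definition of stability at all.
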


  Recall that properness \emph{on} $\mathcal{B}$ means that there are constants $C_1>0$ and $C_2$ such that 
\begin{align}
\nu_{\om}(\vps)\geq C_1J_{\om}(\vps) +C_2\qquad \mbox{for all $\vps\in\mathcal{B}$} \ .
\end{align} 
For the definition of the $J_{\om}$ functional see Section \ref{append1}.  
\begin{remark}
\emph{The reader should compare Theorem \ref{mainresult} to Theorem D page 293 of \cite{paul2012}. The crucial improvement here is that we are able to get the inequality on \emph{all} of $\mathcal{B}$.}
\end{remark}

\begin{remark}   \emph{ In \cite{paul2012}  it is shown that the \emph{semi}stability of $X\ra\cpn$ is equivalent to a \emph{lower bound} for the Mabuchi energy restricted to $\mathcal{B}$.}
\end{remark}
 The following corollary should be compared with Theorem 5.4 from \cite{bbegz}. Those authors deal with singular Fano varieties whereas we treat an arbitrary polarized \emph{manifold}. 
  \begin{corollary}\label{finitegroup}
 \emph{Let $(X , \mathbb{L})$ be a polarized manifold. Let $h$ be a Hermitian metric on $\mathbb{L}$ and let $\om$ denote its curvature K\"ahler form. If the Mabuchi energy  $\nu_{\om}$ is proper on some $\mathcal{B}_k$ then $\mbox{Aut}(X , \mathbb{L})$ is a finite group.}
 \end{corollary}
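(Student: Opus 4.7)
The plan is to argue by contradiction. Suppose $G := \mathrm{Aut}(X,\mathbb{L})$ is infinite. Being an algebraic group, its identity component is a complex Lie group of positive dimension and therefore contains a nontrivial one-parameter subgroup $\sigma(t) = \exp(tv)$, $t \in \mathbb{C}$, generated by a nonzero holomorphic vector field $v$ on $X$ preserving $\mathbb{L}$. The action on $H^{0}(X,\mathbb{L}^{k})$ gives a nontrivial one-parameter subgroup in $PGL(N_{k}+1,\mathbb{C})$, which we still denote by $\sigma(t)$; for real $t$ this produces a path $\vp_{t} \in \mathcal{B}_{k}$ defined by $\om + i\dl\dlb\vp_{t} = \sigma(t)^{*}\om$. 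The path is non-constant: a nontrivial real one-parameter subgroup of the complex group $PGL$ cannot lie inside the maximal compact, so $\sigma(t)$ diverges in the symmetric space $\mathcal{B}_{k}\cong PGL(N_{k}+1,\mathbb{C})/PU(N_{k}+1)$ as $|t|\to\infty$.

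Two asymptotic estimates then drive the contradiction. First, since $\sigma(t)$ acts through $\mathrm{Aut}(X,\mathbb{L})$, the derivative of the Mabuchi energy along the orbit reduces to the classical Futaki invariant: $\frac{d}{dt}\nu_{\om}(\vp_{t}) = \mathcal{F}(v)$, independent of $t$ because $\mathcal{F}(v)$ depends only on the K\"ahler class. Hence $\nu_{\om}(\vp_{t}) = \mathcal{F}(v)\,t + C_{0}$, an \emph{affine} function of $t$. Second, choosing an $\om$-orthonormal basis $\{s_{i}\}$ of $H^{0}(X,\mathbb{L}^{k})$ in which $\sigma(t)$ is diagonal with weights $a_{i}$ (not all equal by nontriviality of $\sigma$), one writes $J_{\om}(\vp_{t})$ as an explicit integral over $X$ involving $\log\sum_{i} e^{2a_{i}t}|s_{i}|^{2}$ and derives $J_{\om}(\vp_{t}) \geq c|t| - C'$ for some $c>0$ as $|t|\to\infty$.

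Combining these with the hypothesized coercive estimate $\nu_{\om}(\vp)\geq C_{1}J_{\om}(\vp)+C_{2}$ applied along $\vp_{t}$ yields
\[
\mathcal{F}(v)\,t + C_{0} \;\geq\; C_{1}c\,|t| + C_{3}\qquad\text{for all } t\in\mathbb{R},
\]
which is impossible since the left-hand side is affine in $t$ while the right-hand side is unbounded above in both directions. Thus $G$ admits no nontrivial one-parameter subgroup, and being an algebraic group it must be finite.

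The step I expect to be the main technical obstacle is the linear lower bound $J_{\om}(\vp_{t}) \geq c|t| - C'$ \emph{in both directions simultaneously}. It requires showing not only that the orbit $\sigma(t)\cdot\om$ diverges in $\mathcal{B}_{k}$, but that this divergence is detected by $J_{\om}$ (rather than being cancelled by the $I_{\om}$ contribution). The key input is that $\sigma$ acts nontrivially on $X$ via the very ample system $|\mathbb{L}^{k}|$, which forces the weights on $H^{0}(X,\mathbb{L}^{k})$ to be non-constant and powers the estimate through a direct calculation with the integrand.
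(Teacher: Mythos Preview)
Your strategy is sound in spirit and in fact parallels what the paper does, but it is organized differently and has one genuine gap.

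\textbf{Comparison with the paper.} The paper does not argue directly with $\nu_{\om}$ and $J_{\om}$. Instead it invokes Theorem~\ref{mainresult} to convert properness on $\mathcal{B}_k$ into \emph{stability} of the pair $(R^{\deg(\Delta)},\Delta^{\deg(R)})$, and then applies Proposition~\ref{finiteauto}, which says that a stable pair has finite automorphism group. The proof of Proposition~\ref{finiteauto} uses the coercive estimate
\[
p_{vw}(\sigma)+\tfrac{1}{m+1}\log\|\sigma\cdot v\|^{2}\;\geq\;\tfrac{\deg(\mathbb{V})}{m+1}\log\|\sigma\|_{hs}^{2}+B
\]
and then treats the Levi decomposition $\mbox{Aut}^{o}=S\rtimes U$ in two separate steps: for unipotent $u\in U$ the left side is literally constant (all characters are trivial on $U$), so $\|u\|_{hs}$ is bounded and $U=\{1\}$; for a torus in $S$ a short weight argument with $w_{\lambda}(\mathbb{I})$ forces every $\lambda$ to be trivial. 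So the paper's argument is algebraic and uses $\log\|\sigma\|_{hs}$, not $J_{\om}$, as the quantity that blows up.

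\textbf{The gap in your argument.} Your step ``choose an $\om$-orthonormal basis in which $\sigma(t)$ is diagonal with weights $a_i$'' presupposes that the generator $v$ is semisimple. If the identity component of $\mbox{Aut}(X,\mathbb{L})$ has nontrivial unipotent radical you may be forced to take $v$ nilpotent, and then $\exp(tv)$ is \emph{not} diagonalizable. In that case the entries of $\sigma(t)$ are polynomials in $t$, so $\vp_t$ grows like $\log|t|$ and your claimed bound $J_{\om}(\vp_t)\geq c|t|-C'$ fails; one gets only logarithmic growth of $J_{\om}$. (On $\mathbb{P}^1$ with the standard unipotent, a direct computation gives $J_{\om}(\vp_t)\sim 2\log|t|$.) Your own contradiction ``affine $\geq C_1c|t|+C_3$'' therefore does not go through as written.

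\textbf{How to repair it.} The fix is that you do not actually need linear growth. From $\nu_{\om}(\vp_t)=\mathcal{F}(v)\,t+C_0$ and $\nu_{\om}\geq C_1 J_{\om}+C_2\geq C_2$ you first conclude $\mathcal{F}(v)=0$, hence $\nu_{\om}(\vp_t)\equiv C_0$, and then $J_{\om}(\vp_t)\leq (C_0-C_2)/C_1$ is \emph{bounded}. So it suffices to show $J_{\om}(\vp_t)\to\infty$ along any nontrivial one-parameter subgroup, linear or not. For semisimple $v$ your weight argument gives this; for nilpotent $v$ you must supply a separate argument (the logarithmic growth above does the job, but it has to be proved). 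This two-case split is exactly what the paper carries out, only at the level of the pair $(R,\Delta)$ where the unipotent case becomes the one-line observation that $\|u\|_{hs}$ is unbounded while $p_{vw}(u)$ and $\|u\cdot v\|$ are constant.
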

This follows from Theorem \ref{mainresult} and Proposition \ref{finiteauto} .

Let $X$ be a Fano manifold. Let $\alpha(X)$ be Tian's \emph{alpha invariant}. Then we deduce the following from Corollary \ref{finitegroup}.
\begin{corollary}\label{alpha} 
\emph{ If $\alpha(X)>\frac{n}{n+1}$ then $\mbox{Aut}(X)$ is finite.}
\end{corollary}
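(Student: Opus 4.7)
The plan is to deduce this from Corollary \ref{finitegroup}, taking $\mathbb{L} := K_X^{-1}$. Since every biholomorphism of a smooth projective variety preserves the canonical bundle, one has the identification $\mbox{Aut}(X,K_X^{-1}) = \mbox{Aut}(X)$, so it suffices to exhibit a smooth K\"ahler form $\om \in c_1(X)$ and an integer $k \geq 1$ such that $\nu_{\om}$ is proper on the corresponding space $\mathcal{B}_k$ of Bergman metrics.

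To produce such data I would fix any smooth Hermitian metric on $K_X^{-1}$ with positive curvature form $\om \in c_1(X)$ and invoke Tian's classical alpha-invariant theorem. Under the standing hypothesis $\alpha(X) > \frac{n}{n+1}$ that theorem furnishes constants $\epsilon > 0$ and $C \in \mathbb{R}$ such that
\[
\nu_{\om}(\vp) \geq \epsilon J_{\om}(\vp) - C
\]
for every smooth K\"ahler potential $\vp$ satisfying $\om+\sqrt{-1}\,\dl\dlb\vp > 0$. In Tian's original argument this bound is first obtained for the Aubin-Yau functional via a direct $L^p$-estimate on $e^{-\vp}$, and then transferred to the Mabuchi energy using the Fano-case decomposition of $\nu_{\om}$ as Aubin-Yau plus relative entropy together with the nonnegativity of the entropy term. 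Since, for every $k$ for which $K_X^{-k}$ is very ample, the space $\mathcal{B}_k$ sits inside the space of smooth K\"ahler potentials representing $c_1(X)$, the displayed estimate restricts to give properness of $\nu_{\om}$ on $\mathcal{B}_k$.

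Finally, applying Corollary \ref{finitegroup} to $(X, K_X^{-1})$ with the K\"ahler form $\om$ constructed above yields finiteness of $\mbox{Aut}(X, K_X^{-1}) = \mbox{Aut}(X)$, as claimed. The only substantive ingredient is Tian's alpha-invariant estimate, which I would cite rather than reprove; the remaining pieces — the identification $\mbox{Aut}(X, K_X^{-1}) = \mbox{Aut}(X)$, the nonnegativity of the relative entropy, and the inclusion of $\mathcal{B}_k$ into the space of K\"ahler potentials — are essentially formal once the polarization has been correctly chosen. The main conceptual obstacle, should one insist on a self-contained argument, is exactly the passage from Tian's original $L^p$ estimate to a linear coercive bound in $J_{\om}$ for $\nu_{\om}$, but in the Fano case this is well known and requires only the entropy nonnegativity noted above.
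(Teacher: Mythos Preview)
Your proposal is correct and follows essentially the same route as the paper: the paper simply states that Corollary~\ref{alpha} is deduced from Corollary~\ref{finitegroup}, and your argument supplies exactly the missing details—Tian's alpha-invariant estimate gives properness of $\nu_{\om}$ on $\mathcal{H}_{\om}$ (hence on any $\mathcal{B}_k$), and then Corollary~\ref{finitegroup} applied to $(X,K_X^{-1})$ finishes the job. The paper's remark that the proof avoids both the existence of a K\"ahler--Einstein metric and Matsushima's theorem is consistent with your outline, which uses only the direct coercive bound coming from the alpha invariant.
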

\begin{remark}
\emph{The proof of Corollary \ref{alpha} does not use the existence of a K\"ahler Einstein metric, in particular we do not require the Matsushima theorem regarding reductivity of the Lie algebra of holomorphic vector fields. The reader should compare this to Corollary 1.5 from \cite{odaka&sano}.}
\end{remark}

\begin{corollary}
\emph{ Let $X\ra\cpn$ be an $n$-dimensional, linearly normal, complex projective manifold. Let $\om={\om_{FS}}|_X$. Then $\nu_{\om}$ is proper on $\mathcal{B}$ if and only if it is proper on all degenerations in $\mathcal{B}$.}
\end{corollary}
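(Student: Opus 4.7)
The plan is to apply Theorem \ref{mainresult} in both directions and reduce the corollary to the numerical criterion for stability of pairs announced in the abstract. The forward direction is immediate: if $\nu_{\om}$ satisfies the coercive estimate on all of $\mathcal{B}$, then it satisfies the same estimate on every subset, in particular on every degeneration contained in $\mathcal{B}$.

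For the non-trivial direction, I first identify what a ``degeneration in $\mathcal{B}$'' means in concrete group-theoretic terms: a degeneration is the closure of an orbit of a one-parameter subgroup $\lambda:\mathbb{C}^{*}\to G=\slnc$ acting on the Bergman potential $\vps$ corresponding to $\om={\om_{FS}}|_X$. The key technical input is the identity expressing $\nu_{\om}(\vps)$ (up to bounded terms) as the difference of the log-norms of the sections $R^{\deg(\Delta)}$ and $\Delta^{\deg(R)}$ evaluated at $\sigma$, which is precisely the mechanism behind the ``log norm'' approach of \cite{paul2012}. Under this translation, properness of $\nu_{\om}$ along the degeneration generated by $\lambda$ becomes a strict comparison between the weights of $\lambda$ on $R^{\deg(\Delta)}$ and on $\Delta^{\deg(R)}$, together with the analogous weight description of $J_{\om}$.

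Next, assuming $\nu_{\om}$ is proper on every degeneration, the goal is to invoke the numerical criterion for (semi)stable pairs (the Hilbert--Mumford type criterion which is the central device of the paper) to conclude that $(R^{\deg(\Delta)}, \Delta^{\deg(R)})$ is a stable pair for $G$ in the sense of Definition \ref{cmss}. By Theorem \ref{mainresult} this immediately upgrades to properness of $\nu_{\om}$ on all of $\mathcal{B}$, closing the loop.

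The main obstacle is uniformity: a priori, properness on each individual degeneration only yields constants $C_1(\lambda), C_2(\lambda)$ depending on the chosen one-parameter subgroup, whereas the definition of properness on $\mathcal{B}$ requires a single pair of constants. Overcoming this is exactly what the numerical criterion for stable pairs is designed to do: it converts per-1PS weight inequalities into genuine GIT stability, from which global coercivity follows via Theorem \ref{mainresult}. Once the translation between weights of $\lambda$ on $(R^{\deg(\Delta)}, \Delta^{\deg(R)})$ and the asymptotics of $\nu_{\om}$, $J_{\om}$ along $\lambda$ is spelled out carefully, the argument is essentially formal.
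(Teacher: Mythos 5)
Your proposal follows exactly the paper's route: the paper justifies this corollary only by the remark that it ``follows from Theorem A of \cite{paul2012} and the numerical criterion for pairs (Theorems \ref{numericalcriterion} and \ref{summary})'', which is precisely your plan of translating properness along a degeneration into a weight inequality for the pair $(R^{\deg(\Delta)},\Delta^{\deg(R)})$ via Theorem \ref{theorema}, invoking the numerical criterion to obtain stability, and concluding by Theorem \ref{mainresult}. The uniformity issue you flag (per-degeneration constants versus a single exponent $m$) is likewise left implicit in the paper, so your write-up is, if anything, more explicit than the source about where the weight of the argument lies.
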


\begin{corollary}
\emph{ Let $X\ra\cpn$ be an $n$-dimensional, linearly normal, complex projective manifold. Let $\om={\om_{FS}}|_X$. Then $\nu_{\om}$ is bounded below on $\mathcal{B}$ if and only if it is bounded below on all degenerations in $\mathcal{B}$.}
\end{corollary}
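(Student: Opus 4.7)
The forward implication is trivial: each degeneration sits inside $\mathcal{B}$, so a global lower bound for $\nu_{\om}$ on $\mathcal{B}$ restricts at once to a lower bound on every degeneration. The plan for the converse is to thread three results together in a way parallel to the proof strategy for Corollary \ref{finitegroup}, but with ``proper'' replaced by ``bounded below'' throughout.

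First, I would invoke the second remark following Theorem \ref{mainresult}: by \cite{paul2012}, semistability of $X\ra\cpn$ in the sense of Definition \ref{cmss} is equivalent to $\nu_{\om}$ being bounded below on all of $\mathcal{B}$. Consequently it suffices to prove that if $\nu_{\om}$ is bounded below on every degeneration in $\mathcal{B}$, then the pair $(R^{\deg(\Delta)},\Delta^{\deg(R)})$ is semistable for the action of $G=\slnc$.

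Second, I would appeal to the numerical criterion for semistable pairs announced in the abstract, i.e.\ the Hilbert--Mumford-type statement asserting that $(v,w)$ is semistable exactly when $w_{\lambda}(w)-w_{\lambda}(v)\geq 0$ for every one-parameter subgroup $\lambda:\mathbb{C}^{*}\ra G$, where $w_{\lambda}$ denotes the appropriate limit weight. To translate the analytic hypothesis into this algebraic inequality, I would use the expansion of $\nu_{\om}$ along the Bergman path $\vplt$ generated by $\lambda$,
\begin{align*}
\nu_{\om}(\vplt) \;=\; \bigl(w_{\lambda}(\Delta^{\deg(R)}) - w_{\lambda}(R^{\deg(\Delta)})\bigr)\log|t|^{2} + O(1), \qquad |t|\ra\infty,
\end{align*}
which follows from the decomposition of the K-energy as a difference of log norms of the resultant and hyperdiscriminant (the same formalism that underlies Definition \ref{cmss} and Theorem \ref{mainresult}). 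Boundedness below of $\nu_{\om}$ on this degeneration forces the leading coefficient to be non-negative; as $\lambda$ is arbitrary, the numerical criterion now delivers semistability of the pair, and the first paragraph closes the loop.

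The main obstacle is the numerical criterion itself, which is the paper's principal technical contribution and requires a genuine extension of Hilbert--Mumford to the pair setting of Definition \ref{cmss}. Once that is in hand, the corollary reduces to a clean matching between the $\log|t|^{2}$ coefficient in the asymptotic expansion of $\nu_{\om}$ along $\vplt$ and the weight difference that appears in the pair-semistability inequality, together with the already-established equivalence of \cite{paul2012}.
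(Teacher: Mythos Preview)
Your strategy is exactly the one the paper uses: combine Theorem~A of \cite{paul2012} (Theorem~\ref{theorema} here), which identifies $\nu_{\om}$ on $\mathcal{B}$ with the energy $p_{vw}$ of the pair $(v,w)=(R^{\deg(\Delta)},\Delta^{\deg(R)})$ up to a bounded error, with the numerical criterion for semistable pairs (Theorem~\ref{summary}) and the asymptotic expansion of $p_{vw}$ along one-parameter subgroups (Proposition~\ref{fdkenergy}).

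There is, however, a sign/direction slip you should fix. In the paper's conventions $w_{\lambda}(v)$ is the \emph{minimum} weight (Definition~\ref{weight}), the expansion of Proposition~\ref{fdkenergy} is valid as $|t|\to 0$ (not $|t|\to\infty$), and item~(4) of Theorem~\ref{summary} reads $w_{\lambda}(w)\leq w_{\lambda}(v)$, not $\geq$. Concretely, since $\log|t|^{2}\to -\infty$ as $|t|\to 0$, boundedness below of
\[
p_{vw}(\lambda(t))=\bigl(w_{\lambda}(\Delta^{\deg(R)})-w_{\lambda}(R^{\deg(\Delta)})\bigr)\log|t|^{2}+O(1)
\]
forces the leading coefficient to be $\leq 0$, which is precisely the semistability inequality of Theorem~\ref{summary}. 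With these corrections your argument goes through and coincides with the paper's.
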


All of these results follow from Theorem A of \cite{paul2012} and the \emph{numerical criterion} for pairs (see Theorems \ref{numericalcriterion} and \ref{summary}) established below.
 
 \begin{definition} \emph{A polarized manifold  $(X , \mathbb{L})$ is \emph{\textbf{asymptotically stable}} if and only if there is some $k_0\in \mathbb{N}$ such that for all $k\geq k_0$   
\begin{align}
X\xrightarrow{\mathbb{L}^k} \mathbb{P}^{N_k}
\end{align}
is stable with an exponent $m$ independent of $k$.}
\end{definition}
For the precise definition of stability see Section \ref{definitionofstability} Definition \ref{stable}.
\begin{corollary}
\emph{Let $X$ be a Fano manifold, polarized by (some power of ) $-K_X$. Assume that $\mbox{Aut}(X)$ is discrete and $X$ admits a K\"ahler Einstein metric. Then $X$ is asymptotically stable.}
\end{corollary}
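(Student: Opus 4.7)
The plan is to combine Tian's properness theorem with Theorem \ref{mainresult} applied at each level $k$. Since $X$ is Fano, admits a Kähler--Einstein metric, and has discrete automorphism group, Tian's theorem yields a global properness inequality for the Mabuchi energy on the full space $\mathcal{H}_\om$ of Kähler potentials in the class $c_1(-K_X)$: there exist constants $C_1>0$ and $C_2\in\mathbb{R}$ with
\begin{align*}
\nu_\om(\vp) \;\geq\; C_1 J_\om(\vp) + C_2 \qquad \text{for all } \vp\in\mathcal{H}_\om .
\end{align*}
Crucially, $C_1$ and $C_2$ are intrinsic to $(X,-K_X)$ and do not depend on any choice of Kodaira embedding.

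For each sufficiently large $k$, consider the embedding $X\xrightarrow{|-kK_X|}\mathbb{P}^{N_k}$ with the normalized Fubini--Study form $\om_k:=\frac{1}{k}\om_{FS}|_X\in c_1(-K_X)$. Each Bergman potential $\vps\in\mathcal{B}_k$ (for $\sigma\in SL(N_k+1,\mathbb{C})$) represents a genuine element of $\mathcal{H}_\om$, so Tian's inequality restricts verbatim to $\mathcal{B}_k$ with the same constants $C_1,C_2$ at \emph{every} level $k$. Applying Theorem \ref{mainresult} at level $k$, this uniform properness on $\mathcal{B}_k$ is equivalent to the stability of $X\xrightarrow{(-K_X)^k}\mathbb{P}^{N_k}$ for every $k\geq k_0$.

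The main obstacle, and the heart of what needs to be checked, is that the stability exponent $m$ from Definition \ref{stable} can be taken independent of $k$. For this one must trace through the numerical criterion for pairs (Theorems \ref{numericalcriterion} and \ref{summary}): the exponent $m$ produced by that criterion is governed monotonically by the properness constant $C_1$, together with the ratio of degrees $\deg(\Delta)/\deg(R)$ (which is already built into the pair $(R^{\deg(\Delta)},\Delta^{\deg(R)})$ and is therefore stability-invariant across $k$). Since $C_1$ is uniform in $k$ by the argument above, the resulting $m$ is uniform in $k$, and $X$ is asymptotically stable in the sense of Definition~6 of the introduction.
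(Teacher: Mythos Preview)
Your approach is exactly the paper's: invoke Tian's properness theorem to obtain a global inequality $\nu_\om(\vp)\geq C_1 J_\om(\vp)+C_2$ on all of $\mathcal{H}_\om$, restrict it to each $\mathcal{B}_k$, and then convert this via Theorem~\ref{mainresult} (equivalently, the paper's stated equivalence between asymptotic stability and a uniform-$A$ properness estimate on the $\mathcal{B}_k$) into stability at every level with a $k$-independent exponent.

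One imprecision deserves correction. You claim the ratio $\deg(\Delta)/\deg(R)$ is ``stability-invariant across $k$''; it is not. By Proposition~\ref{degreeofhyp} one has $\deg(\Delta)/\deg(R)=n-\mu_k/(n+1)$, and for the level-$k$ embedding $\mu_k=\mu/k$ genuinely varies. What actually makes $m$ uniform is visible in the proof in Section~4.4: one needs $m+1\geq (n(n+1)-\mu_k)/C_1$, and since $\mu_k=\mu/k\to 0$ the right-hand side is bounded above by $n(n+1)/C_1$, so any $m$ with $m+1\geq n(n+1)/C_1$ works for all $k$. Replacing your ``invariance'' assertion with this boundedness observation closes the argument.
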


 To state a converse to this we note that asymptotic stability is \emph{equivalent} to the existence of constants\footnote{As we will explain below the constant $A$ is basically the reciprocal of the ``stability exponent'' $m$.} $A>0$ and  $C_k$ such that for all $k\geq 1$ the inequality\footnote{For simplicity we just assume that $\mathbb{L}$ is very ample.}
\begin{align}
\nu_{\om}(\vps)\geq AJ_{\om}(\vps) +C_k\qquad \mbox{ $\vps\in\mathcal{B}_k$} \ 
\end{align} 
holds. We emphasize that $A$ is \emph{independent of $k$}. We summarize this discussion with the following corollary of Theorem \ref{mainresult}.

\begin{corollary}\label{existence}
\emph{If $(X , \mathbb{L})$ is asymptotically stable and the sequence $\{C_k\}_{k\geq 1}$ is bounded below, then the Mabuchi energy is proper on the entire space $\mathcal{H}_{\om}$ of K\"ahler metrics in the class $[\om]$. In particular, under this assumption, an asymptotically stable Fano manifold admits a K\"ahler Einstein metric.}
\end{corollary}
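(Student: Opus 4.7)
The plan is to deduce properness on the full space $\mathcal{H}_{\om}$ from properness on the exhausting family of Bergman spaces $\{\mathcal{B}_k\}$ by a density-plus-continuity argument, and then invoke Tian's existence theorem in the Fano case.

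First I would unpack the hypothesis: asymptotic stability combined with Theorem \ref{mainresult} applied at each level $k$ yields the family of inequalities
\begin{align*}
\nu_{\om}(\vps) \geq A\,J_{\om}(\vps) + C_k \qquad \text{for all } \vps \in \mathcal{B}_k,
\end{align*}
with the crucial feature that $A>0$ is \emph{independent of $k$}. Setting $C := \inf_{k\geq 1} C_k > -\infty$ (by hypothesis), I obtain a single uniform bound
\begin{align*}
\nu_{\om}(\vps) \geq A\,J_{\om}(\vps) + C \qquad \text{for all } \vps \in \bigcup_{k\geq 1}\mathcal{B}_k.
\end{align*}

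Next I would extend this inequality from $\bigcup_k \mathcal{B}_k$ to all of $\mathcal{H}_{\om}$. Given any $\vp\in\mathcal{H}_{\om}$, the Tian--Ruan--Zelditch--Catlin approximation theorem produces a sequence $\vps_k\in\mathcal{B}_k$ converging to $\vp$ smoothly. Both $\nu_{\om}$ and $J_{\om}$ are continuous under smooth convergence of K\"ahler potentials (the $J$--functional involves only first derivatives of $\vp$, and the entropy and energy pieces of the Mabuchi functional depend continuously on $\vp$ in $C^{\infty}$). Passing to the limit $k\to\infty$ in the uniform inequality above yields
\begin{align*}
\nu_{\om}(\vp) \geq A\,J_{\om}(\vp) + C \qquad \text{for all } \vp\in\mathcal{H}_{\om},
\end{align*}
which is exactly properness of $\nu_{\om}$ on $\mathcal{H}_{\om}$. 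For the second assertion, when $X$ is Fano and $\om$ represents $c_1(X)$, properness of the Mabuchi functional on $\mathcal{H}_{\om}$ implies by Tian's existence theorem that $X$ admits a K\"ahler--Einstein metric, completing the proof.

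The main obstacle is conceptual rather than computational: one must ensure the approximation step respects both functionals simultaneously. The uniformity of the slope $A$ across all $k$ is what makes the limiting argument possible; if $A$ were replaced by a $k$--dependent $A_k$ that degenerated as $k\to\infty$, the inequality would be lost in the limit. This is precisely why the definition of asymptotic stability insists on an exponent $m$ (and hence a slope $A\sim 1/m$) independent of $k$, and it is the reason the numerical criterion for pairs established earlier in the paper (Theorems \ref{numericalcriterion} and \ref{summary}) is the essential input.
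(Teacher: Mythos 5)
Your argument is essentially the paper's own justification, which is given at the end of Section \ref{append1}: combine the uniform-in-$k$ coercivity estimate on $\bigcup_k\mathcal{B}_k$ (uniform slope $A$ from asymptotic stability, uniform constant from $\inf_k C_k>-\infty$) with Tian's density theorem and the convergence $\nu_{\om_k}(\rho_k(\vp))\ra\nu_{\om}(\vp)$, $J_{\om_k}(\rho_k(\vp))\ra J_{\om}(\vp)$, then pass to the limit and invoke Tian's existence theorem (with the refinement of Phong et al.). The one hypothesis of that existence theorem you do not explicitly verify is the finiteness of $\mbox{Aut}(X)$, but this is supplied by Corollary \ref{finitegroup}, since properness on some $\mathcal{B}_k$ is part of your hypotheses.
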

 \begin{remark}
 \emph{The existence asserted in Corollary \ref{existence}  relies on important results of Tian \cite{tian97} and Phong et al. \cite{pssw2008}.  See Section \ref{append1}.}
 \end{remark}

\begin{remark}\emph{The reader should consult the preprints \cite{cds1} and \cite{tian2012} for spectacular progress on the ``Tian-Yau-Donaldson'' conjecture in the Fano case.}
\end{remark}
 To close this section we should explain to the reader that all of our work on the stability conjectures was guided by the desire to take the results of \cite{dingtian}, \cite{tian94}, \cite{tian97} to their logical conclusion. Concerning properness of the Mabuchi functional, existence of K\"ahler Einstein metrics, and Stability,  Tian writes \footnote{This is a paraphrase.} (see \cite{tian97}, last paragraph on page 5) :
 \ \\
 \begin{center}
 \emph{Clearly, the Stability Conjecture holds provided one can deduce the properness  of $\nu_{\om}$ from the K-Stability of $(X , -K_X)$. However, this seems to be a highly nontrivial problem.}
  \end{center}

 \section{(Semi)stable Pairs}\label{definitionofstability}
In this section we give the precise definition of a (semi)stable pair. This definition will be put in context in Section \ref{equivexts} . To begin, we let $G$ denote any of the classical linear algebraic groups over $\mathbb{C}$.  Specifically $G$ can be taken to be any one of the following
\begin{align*}
SL(N+1) \ , \ SO(2N) \ ,\ SO(2N+1)\ , \ Sp(N) \ .
\end{align*}
Primarily we will be interested in the case when $G$ is the special linear group.
      
      For any vector space $\mathbb{V}$ and any $v\in \mathbb{V}\setminus\{0\}$ we  let $[v]\in\mathbb{P}(\mathbb{V})$ denote the line through $v$. If $\mathbb{V}$ and $\mathbb{W}$ are $G$ representations with (nonzero) points $v$ and $w$ respectively, we define the projective orbits :     
\begin{align}
\mathcal{O}_{vw}:=G\cdot [(v,w)]  \subset \mathbb{P}(\mathbb{V}\oplus\mathbb{W}) \ , \ \mathcal{O}_{v}:=G\cdot [(v,0)]  \subset \mathbb{P}(\mathbb{V}\oplus\{0\})\ .
\end{align}
We let $\overline{\mathcal{O}}_{vw}\ ,\  \overline{\mathcal{O}}_{v}$ denote the Zariski closures of these orbits. Throughout this paper we \emph{always} assume that 
\begin{align}
0\neq v\in\mathbb{V} \ , \ 0\neq w\in \mathbb{W} \ .
\end{align}
The central concept of this article is the following.
 \begin{definition}\label{semistable} 
\emph{The pair $(v,w)$ is \textbf{\emph{semistable}} if and only if}  $ \overline{\mathcal{O}}_{vw}\cap\overline{\mathcal{O}}_{v}=\emptyset $ .
  \end{definition} 
 
 \begin{remark}
\emph{The semistability of the pair $(v,w)$ depends only on $([v],[w])$. The reader should also observe that the definition is \emph{not} symmetric in $v$ and $w$. In virtually all examples where the pair $(v,w)$ is semistable $(w, v)$ is not semistable.}
\end{remark}
In order to define a \emph{strictly stable} (henceforth stable) pair we need a large (but fixed) integer $m$ and the auxiliary left regular representation of $G$
\begin{align}
G\times\mathcal{GL}(N+1,\mathbb{C}) \ \ni \ (\sigma , A)\ra \sigma\cdot A \ .
\end{align}
  Recall that $\mathcal{GL}(N+1,\mathbb{C})$ is the vector space of square matrices of size $N+1$. The action is matrix multiplication.

  Let $T$ denote any maximal algebraic torus of $G$.  Let $M_{\mathbb{Z}}=M_{\mathbb{Z}}(T)$ denote the {character lattice} of $T$
\begin{align}
M_{\mathbb{Z}}:= \mbox{Hom}_{\mathbb{Z}}(T,\mathbb{C}^*) \ . 
\end{align}

As usual, the dual lattice is denoted by $N_{\mathbb{Z}}$. It is well known that $ u\in N_{\mathbb{Z}}$ corresponds to an algebraic one parameter subgroup $\lambda^u$ of $T$. These are algebraic homomorphims $\lambda:\mathbb{C}^*\ra T$.  The correspondence is given by 
\begin{align}
(\cdot \ , \ \cdot) :N_{\mathbb{Z}}\times M_{\mathbb{Z}}\ra \mathbb{Z} \ , \ m(\lambda^u(\alpha))=\alpha^{(u , m)} \ .
\end{align}
 
 We introduce associated real vector spaces by extending scalars 
\begin{align} 
 \begin{split}
 &M_{\mathbb{R}}:= M_{\mathbb{Z}}\otimes_{\mathbb{Z}}\mathbb{R} \\
\ \\
& N_{\mathbb{R}}:= N_{\mathbb{Z}}\otimes_{\mathbb{Z}}\mathbb{R}\ .
\end{split}
\end{align}

Since $\mathbb{V}$ is rational it decomposes under the action of $T$ into  {weight spaces}
\begin{align}
\begin{split}
&\mathbb{V}=\bigoplus_{a\in {\mathscr{A}_T}}\mathbb{V}_{a}  \\
\ \\
& \mathbb{V}_{a}:=\{v\in \mathbb{V}\ |\ t\cdot v=a(t) v \ , \ t\in T\}
\end{split}
\end{align}
$\mathscr{A}_T$ denotes  the $T$-{support} of $\mathbb{V}$ 
\begin{align}
\mathscr{A}_T:= \{a \in M_{\mathbb{Z}}\ | \ \mathbb{V}_{a}\neq 0\} \ .
\end{align}
Given $v\in \mathbb{V}\setminus \{0\}$  the projection of $v$ into $\mathbb{V}_{a}$ is denoted by $v_{a}$. The support of any (nonzero) vector $v$ is then defined by
\begin{align}
\mathscr{A}_T(v):= \{a\in \mathscr{A}_T\ | \ v_{a}\neq 0\} \ .
\end{align}
\begin{definition} \emph{ Let $T$ be any maximal torus in $G$. Let $v\in \mathbb{V}\setminus\{0\}$ . The \textbf{\emph{weight polytope}} of $v$ is the compact convex lattice polytope $\mathcal{N}(v)$ given by}
\begin{align}\label{wtpolytope}
\mathcal{N}(v):=\mbox{ {\emph{conv}}}\ ( \mathscr{A}_T(v)) \subset M_{\mathbb{R}} \  , \ \mbox{ {\emph{conv}}}(\mathscr{A}):=\mbox{convex hull}(\mathscr{A}) \ .
\end{align}
\end{definition}
The \emph{\textbf{standard $N$-simplex}}, denoted by $Q_N$, is defined to be the weight polytope of  the identity operator 
\begin{align}
\mathbb{I}\in \mathcal{GL}(N+1,\mathbb{C})\ .
\end{align}
 
We remark that $Q_N$  is full-dimensional and contains the origin in its strict interior
\begin{align}
0\in Q_N:=\mathcal{N}(\mathbb{I})\subset M_{\mathbb{R}} \ .
\end{align}  

Let $\mathbb{V}$ be a $G$ module. We define the \emph{\textbf{degree}} of $\mathbb{V}$  as follows
\begin{align}
\deg(\mathbb{V}):=\min\Big\{k\in \mathbb{Z}_{>0} \ |\ \mathcal{N}(v)\subseteq kQ_N\ \mbox{for all $0\neq v\in \mathbb{V}$} \ \Big\} \ .
\end{align}

For any $v\in \mathbb{V}$ and $m\in \mathbb{N}$ we define
\begin{align}
v^m:=v^{\otimes m}\in \mathbb{V}^{\otimes m} \ .
\end{align}
Finally we can give the definition of a stable pair. The reader should compare this with Definition 2 on page 264 of \cite{paul2012}. 
\begin{definition}\label{stable} 
\emph{The pair $(v,w)$ is \emph{\textbf{stable}} if and only if there is a positive integer $m$ such that the pair
\begin{align}
(\mathbb{I}^q\otimes v^m \ , \ w^{m+1})
\end{align}
is  semistable where $q$ denotes the degree of $\mathbb{V}$. }
\end{definition}
\begin{remark}
\emph{If the pair $(v,w)$ is stable ``with exponent $m$'' , then it is also stable with \emph{any larger exponent}. See remark \ref{exponent} .} 
\end{remark}
We think of the pair
\begin{align}
(\mathbb{I}^q\otimes v^m \ , \ w^{m+1})
\end{align}
as being a ``perturbation'' of $(v,w)$. Formally letting $m\ra \infty$ we have that
\begin{align}
(\mathbb{I}^q\otimes v^m \ , \ w^{m+1})\ra (v\ ,\ w) \ .
\end{align}
In particular if $(v\ ,\ w)$ is stable then it is semistable, as we should expect.

In order to bring out the meaning of a (semi)stable pair we consider the case where $\mathbb{V}\cong \mathbb{C}$ (the trivial one dimensional representation) .  
\begin{proposition}\label{ss1}
\emph{\emph{The following statements are equivalent}.
\begin{align*}
& 1)\ (1,w) \ \mbox{is a semistable pair.} \\
\ \\
& 2)\ \mbox{  $0\notin \overline{G\cdot w}\subset \mathbb{W}$ .} \\
\end{align*}}
\end{proposition}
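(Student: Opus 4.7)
My plan is to unwind the definitions, exploiting crucially that $G$ acts trivially on $\mathbb{V} \cong \mathbb{C}$. First I would observe that $\sigma \cdot (1, 0) = (1, 0)$ for every $\sigma \in G$, so the orbit $\mathcal{O}_1 = G \cdot [(1, 0)]$ collapses to the single point $[(1, 0)] \in \mathbb{P}(\mathbb{C} \oplus \mathbb{W})$. In particular $\overline{\mathcal{O}}_1 = \{[(1, 0)]\}$, and the semistability condition $\overline{\mathcal{O}}_{1, w} \cap \overline{\mathcal{O}}_1 = \emptyset$ reduces to the single requirement $[(1, 0)] \notin \overline{\mathcal{O}}_{1, w}$.

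Next I would compute $\mathcal{O}_{1, w} = \{[(1, \sigma \cdot w)] : \sigma \in G\}$, again using triviality of the $\mathbb{V}$ factor. The implication $(2) \Rightarrow (1)$ is then nearly immediate by contrapositive: if $(1, w)$ is not semistable, then $[(1, 0)] \in \overline{\mathcal{O}}_{1, w}$, so by the standard characterization of convergence in projective space there exist $\sigma_i \in G$ and scalars $\lambda_i \in \mathbb{C}^*$ with $(\lambda_i, \lambda_i \sigma_i \cdot w) \to (1, 0)$ in $\mathbb{C} \oplus \mathbb{W}$. Hence $\lambda_i \to 1$ (so the $\lambda_i$ are eventually nonzero and bounded away from $0$) and therefore $\sigma_i \cdot w \to 0$, placing $0$ in $\overline{G \cdot w}$.

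For the converse $(1) \Rightarrow (2)$, I would argue contrapositively as well. If $0 \in \overline{G \cdot w}$, choose $\sigma_i \in G$ with $\sigma_i \cdot w \to 0$; then $(1, \sigma_i \cdot w) \to (1, 0)$ in $\mathbb{C} \oplus \mathbb{W}$, and passing to $\mathbb{P}(\mathbb{C} \oplus \mathbb{W})$ yields $[(1, \sigma_i \cdot w)] \to [(1, 0)]$. This exhibits $[(1, 0)] \in \overline{\mathcal{O}}_{1, w} \cap \overline{\mathcal{O}}_1$, contradicting semistability. Since $G \cdot w$ is a constructible subset of $\mathbb{W}$, its Zariski and Euclidean closures coincide, so the statement makes unambiguous sense in either topology.

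The whole argument is essentially formal; the only point requiring any care is the translation between projective and affine limits, which is handled by the standard rescaling procedure. There is no serious obstacle here — the proposition should be viewed as a sanity check on Definition \ref{semistable}, confirming that in the degenerate case $\mathbb{V} = \mathbb{C}$ it recovers the classical Hilbert--Mumford notion of semistability for the single vector $w \in \mathbb{W}$.
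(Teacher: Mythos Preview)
Your argument is correct. The paper itself states that ``the proof of Proposition \ref{ss1} can be safely left to the reader'' and supplies no details, so there is nothing to compare against; your unwinding of the definitions (using that $G$ acts trivially on $\mathbb{V}\cong\mathbb{C}$, reducing to the affine chart, and invoking the coincidence of Zariski and Euclidean closures for constructible sets) is exactly the kind of routine verification the author had in mind.
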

In order to state the next proposition we endow $\mathbb{W}$ with a Euclidean norm.
 \begin{proposition}\label{ss2}
\emph{\emph{The following statements are equivalent}.
\begin{align*}
& 1)\ (1,w) \ \mbox{is a stable pair.} \\
\ \\
& 2)\ \mbox{The orbit $G\cdot w\subset \mathbb{W}$ is closed and the stabilizer $G_w$ is finite.} \\
 \ \\
& 3)\ \mbox{There are constants $A>0$ and $B$ such that }\\
&  \log||\sigma\cdot w||^2\geq A\log||\sigma||_{hs}^2 + B \qquad \mbox{for all $\sigma\in G$} \ .
 \end{align*}}
\end{proposition}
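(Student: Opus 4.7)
The plan is to prove the chain $(1) \Leftrightarrow (3) \Leftrightarrow (2)$, where the first equivalence is essentially an unpacking of Definition \ref{stable} when $\mathbb{V} = \mathbb{C}$, and the second is a form of the classical Kempf--Ness theorem for a stable point of a linear action. For $(1) \Leftrightarrow (3)$, observe first that every nonzero vector of the trivial representation $\mathbb{C}$ has weight polytope $\{0\} \subset Q_N$, so $\deg(\mathbb{C}) = 1$. By Definition \ref{stable}, $(1,w)$ is stable iff there exists a positive integer $m$ such that the pair $(\mathbb{I}, w^{m+1})$ is semistable in $\mathcal{GL}(N+1,\mathbb{C}) \oplus \mathbb{W}^{\otimes(m+1)}$. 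Failure of this semistability is equivalent to the existence of sequences $\sigma_n \in G$ and $c_n \in \mathbb{C}^*$ with $c_n\sigma_n \to A \neq 0$ while $c_n(\sigma_n \cdot w)^{\otimes(m+1)} \to 0$. Comparing norms and eliminating $|c_n|$ produces a sequence along which $(m+1)\log\|\sigma_n \cdot w\|^2 - \log\|\sigma_n\|_{hs}^2 \to -\infty$, which is precisely the negation of (3) with constant $1/(m+1)$. Conversely, if (3) holds with some $A > 0$, choosing $m$ so that $A(m+1) \geq 1$ and using the uniform lower bound $\|\sigma\|_{hs}^2 \geq N+1$ on $SL(N+1,\mathbb{C})$ reverses the argument and yields semistability of $(\mathbb{I}, w^{m+1})$, hence (1).

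For $(3) \Rightarrow (2)$: if $G \cdot w$ were not closed, the Birkes--Hilbert--Mumford theorem would provide a nontrivial one-parameter subgroup $\lambda : \mathbb{C}^* \to G$ with $\lim_{t\to 0} \lambda(t) \cdot w$ existing in $\mathbb{W}$; then $\|\lambda(t) \cdot w\|$ stays bounded while $\|\lambda(t)\|_{hs}^2 \to \infty$, contradicting (3). If instead $G_w$ were infinite, then as an algebraic subgroup it would contain a nontrivial one-parameter subgroup $\lambda$, along which $\|\lambda(t) \cdot w\| = \|w\|$ is constant but $\|\lambda(t)\|_{hs}^2$ is unbounded, again contradicting (3).

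The main content of the proposition is the implication $(2) \Rightarrow (3)$, a logarithmic coercivity estimate for a stable orbit. I would invoke the Cartan (polar) decomposition $\sigma = k_1 h k_2$ with $k_i$ in a maximal compact subgroup $K$ and $h$ in a positive real torus, so that both $\|\sigma\|_{hs}$ and $\|\sigma\cdot w\|$ reduce to a comparison of $\|h \cdot (k_2 w)\|$ with $\|h\|_{hs}$. Decomposing $k_2 w$ into weight vectors of the torus, assumption (2) combined with the Hilbert--Mumford criterion forces $0$ to lie in the interior of every weight polytope $\mathcal{N}(k_2 w)$; a compactness argument in $k_2 \in K$ then supplies a uniform positive constant $A > 0$ controlling how far inside $0$ sits. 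Testing against the one-parameter subgroup determined by $h$ yields $\|h\cdot(k_2 w)\|^2 \geq C\|h\|_{hs}^{2A}$, and taking logarithms delivers (3). The principal obstacle is making this polytope-containment estimate uniform over the compact factor $k_2$ and simultaneously over all one-parameter subgroups; equivalently, one must pass from the pointwise Hilbert--Mumford criterion to a global logarithmic growth bound, which is precisely the content of Kempf--Ness convexity on the symmetric space $G/K$.
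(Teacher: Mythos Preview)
Your equivalence $(1)\Leftrightarrow(3)$ is correct and is essentially the energy--functional argument (Proposition~\ref{vwlowerbound}) specialized to the pair $(\mathbb{I},w^{m+1})$: since $\sigma\cdot\mathbb{I}=\sigma$ and $\|\sigma\cdot w^{\otimes(m+1)}\|=\|\sigma\cdot w\|^{m+1}$, boundedness below of $p_{\mathbb{I},w^{m+1}}$ is literally the inequality in $(3)$ with $A=\frac{1}{m+1}$. Your $(3)\Rightarrow(2)$ is also fine.

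The genuine gap is exactly where you locate it, in $(2)\Rightarrow(3)$: you need a \emph{single} $\epsilon>0$ with $\epsilon Q_N\subset\mathcal{N}(k\cdot w)$ for every $k\in K$, and your analytic compactness sketch does not supply it. The paper closes this through Theorem~\ref{summary}, and the point is purely combinatorial rather than analytic. Fix one maximal torus $T$. For any $u\in\mathbb{W}$ the support $\mathscr{A}_T(u)$ is a subset of the \emph{finite} set $\mathscr{A}_T$ of $T$--weights of $\mathbb{W}$, so $\mathcal{N}(u)$ is the convex hull of one of finitely many subsets of $\mathscr{A}_T$. In particular the collection $\{\mathcal{N}(k\cdot w):k\in K\}$ is a \emph{finite} set of polytopes. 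Hypothesis $(2)$ is $G$--invariant, so each $k\cdot w$ is classically Hilbert--Mumford stable and hence $0$ lies in the interior of each polytope that actually occurs. Taking the minimum of finitely many positive numbers gives a uniform $\epsilon>0$ with $\epsilon Q_N\subset\mathcal{N}(k\cdot w)$ for all $k$. Choosing $m$ with $\frac{1}{m+1}\leq\epsilon$ yields $\mathcal{N}(\mathbb{I})=Q_N\subset(m+1)\mathcal{N}(k\cdot w)=\mathcal{N}(k\cdot w^{m+1})$ for every $k\in K$; since every maximal torus is $K$--conjugate to $T$, Theorem~\ref{summary}(3) now gives semistability of $(\mathbb{I},w^{m+1})$, i.e.\ $(1)$, and hence $(3)$.

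So your overall architecture is right, but the uniformity you flag as ``the principal obstacle'' is not a Kempf--Ness convexity argument on $G/K$; it is the trivial observation that only finitely many weight polytopes can arise, which is what Theorem~\ref{summary} is designed to exploit.
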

  $||\sigma||_{hs}$ denotes the Hilbert-Schmidt norm of the matrix $\sigma$. 
\begin{remark}
\emph{ In both propositions we emphasize that the affine orbit is being considered.}
\end{remark}
The proof of Proposition \ref{ss1} can be safely left to the reader. Proposition \ref{ss2} is somewhat trickier and follows from Theorem \ref{summary}  below.
  \section{Resultants, (Hyper)discriminants, and  the Stability of Projective Varieties }\label{resultants}
    In order to define our stability concept we first recall the definitions of  $\Delta(X)$, the hyperdiscriminant, and $R(X)$, the resultant, of a projective variety. We always assume that $X$ is embedded into $\cpn$ as a linearly normal variety. This insures that the resultant and discriminant of $X$ behave as well as possible (see \cite{tevelev} Section 1.4.3).  For further details and background we refer the reader to \cite{gkz} and \cite{paul2012} .
 
 Let $X^n\ra \cpn$ be an irreducible, $n$-dimensional, linearly normal, complex projective variety of degree $d\geq 2$.
Let $\mathbb{G}(k,N)$ denote the Grassmannian of $k$-dimensional \emph{projective} linear subspaces of $\cpn$. This is the same as $G(k+1, \mathbb{C}^{N+1})$ , the Grassmannian of $k+1$ dimensional subspaces of $\mathbb{C}^{N+1}$.   

\begin{definition}\label{cayleyform}
 (Cayley  \cite{cayley1860}) \emph{The \textbf{\emph{associated form}} of $X^n\ra \cpn$ is given by}
\begin{align}
Z_X:=\{L\in \mathbb{G}(N-n-1,N)\ | L\cap X\neq \emptyset \} \ .
\end{align}
\end{definition}
It is well known that $Z_X$ enjoys the following properties: \\
\ \\
$i)$   $Z_X$ is a \textbf{\emph{divisor}} in $\mathbb{G}(N-n-1,N)$ . \\
 \ \\
$ii)$   $Z_X$ is irreducible . \\
  \ \\
$iii)$  $\deg(Z_X)=d$ (in the Pl\"ucker coordinates) . \\
  \ \\
  
  Therefore there exists $R_X\in H^0(\mathbb{G}(N-n-1,N), \mathcal{O}(d))$ such that
 \begin{align}
 \{R_X=0\}=Z_X
 \end{align}
 $R_X$ is the Cayley-{Chow} form of $X$. Following  Gelfand, Kapranov, and Zelevinsky  we call $R_X$ the \textbf{\emph{X-resultant}} .  Observe that   
\begin{align}
R_X\in \mathbb{C}_{d(n+1)}[M_{(n+1)\times (N+1)}]^{SL(n+1,\mathbb{C})} \ . 
\end{align}

\subsection{Discriminants} We assume that $X\ra\cpn$ has degree $d\geq 2$. Let $X^{sm}$ denote the smooth points of $X$. For $p\in X^{sm}$ let 
$\mathbb{T}_p(X)$ denote the {embedded} tangent space to $X$ at $p$. Recall that $\mathbb{T}_p(X)$ is an $n$-dimensional {projective} linear subspace of $\cpn$.
\begin{definition}\label{dual}
 \emph{ The \emph{\textbf{dual variety}} of $X$, denoted by $X^{\vee}$, is the Zariski closure of the set of  {tangent hyperplanes} to $X$ at its smooth points }
\begin{align}
X^{\vee}:=\overline{ \{ f\in {\cpn}^{\vee} \ |  \  \mathbb{T}_p(X)\subset \ker(f) \ , \ p\in X^{sm}\} } \ .
\end{align}
\end{definition}
 Usually $X^{\vee}$ has codimension one in $ {\cpn}^{\vee}$.  
\begin{definition} \emph{The \emph{\textbf{dual defect}} of $X\ra \cpn$ is the integer}
\begin{align}
\delta(X):=\mbox{\emph{codim}}(X^{\vee})-1 \ .
\end{align}
\end{definition}
 When $\delta=0$ there exists an irreducible homogeneous polynomial $\Delta_X\in \mathbb{C}[{\cpn}^{\vee}] $ (  the  \textbf{\emph{X-discriminant}})  such that
\begin{align}
X^{\vee}=\{\Delta_X=0\} \ .
\end{align}
    

\subsection{Hyperdiscriminants }  Given $X\ra \cpn$ we consider the \emph{Segre embedding} 
\begin{align}
\xhyp\ra \mathbb{P}({M_{n\times (N+1)}}^{\vee}) \ .
\end{align}
Of basic importance for this paper is the next proposition which follows from work of Weyman and Zelevinsky (see \cite{weymanzelevinsky}) and Zak ( see \cite{zak} ) .
\begin{proposition}\label{cayleytrick} \emph{ Let $X\ra \cpn$ be a nonlinear subvariety embedded by a very ample complete linear system. Then $\delta(\xhyp)=0 $ .  }
 \end{proposition}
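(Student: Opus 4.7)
The plan is to establish $\delta(\xhyp)=0$ directly via a dimension count on the conormal variety. Set $Z:=\xhyp$, $P:=\mathbb{P}(M_{n\times(N+1)}^{\vee})$, and $\widetilde{P}:=\mathbb{P}(M_{n\times(N+1)})$, so that $\widetilde{P}$ parametrizes hyperplanes in $P$. The conormal variety $\mathrm{Con}(Z)\subset Z\times\widetilde{P}$ is pure of dimension $\dim\widetilde{P}-1$, since its fiber over a smooth $p\in Z$ is the linear subspace of $\widetilde{P}$ of dimension $\dim\widetilde{P}-\dim Z-1$ consisting of hyperplanes containing $\mathbb{T}_p(Z)$. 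Consequently $\delta(Z)=0$ is equivalent to the second projection $\pi\colon\mathrm{Con}(Z)\to\widetilde{P}$ being generically finite onto its image.

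Next I would make the tangency condition explicit. A matrix $A\in M_{n\times(N+1)}$ defines a hyperplane $H_A$ whose restriction to $Z$ is cut out by $\Phi(x,y):=\sum_{i,j}A_{ij}x_jy_i$. Writing $L_i^A(x):=\sum_j A_{ij}x_j$ for the $i$-th row of $A$ as a linear form on $\cn$, a direct computation of the partials of $\Phi$ shows that $H_A$ contains $\mathbb{T}_{(x_0,y_0)}(Z)$ at a smooth point if and only if
\begin{align*}
(\mathrm{a})\ \ L_i^A(x_0)=0 \ \ \forall\, i,\qquad (\mathrm{b})\ \ \textstyle\sum_i (y_0)_i\,L_i^A\ \ \text{vanishes on}\ \ \mathbb{T}_{x_0}(X).
\end{align*}
Condition $(\mathrm{a})$ automatically forces $\Phi(x_0,y_0)=0$, so the incidence relation is free.

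For the codimension count, fix $x_0\in X^{sm}$. Condition $(\mathrm{a})$ imposes $n$ independent linear equations on $A$, hence is codimension $n$ in $\widetilde{P}$. Given $(\mathrm{a})$, each restricted form $L_i^A|_{\mathbb{T}_{x_0}(X)}$ lies in the $n$-dimensional subspace of linear forms on the $(n+1)$-dimensional affine cone over $\mathbb{T}_{x_0}(X)$ that vanish at $x_0$; existence of a nonzero $y_0$ realizing $(\mathrm{b})$ amounts to the vanishing of a single $n\times n$ determinant, i.e.\ codimension $1$. Letting $x_0$ range over $X^{sm}$ (of dimension $n$), the incidence variety $\mathcal{I}\subset X^{sm}\times\widetilde{P}$ has dimension $\dim\widetilde{P}-1$, and hence the image $Z^{\vee}=\pi(\mathrm{Con}(Z))$ has dimension at most $\dim\widetilde{P}-1$.

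The main obstacle is to upgrade this bound to equality by showing $\pi$ is actually generically finite. For generic $A$ in the image, $X\cap L_A$ is $0$-dimensional by complementary dimension, only a single $x_0$ among these finitely many points exhibits the rank drop required by $(\mathrm{b})$, and the kernel of the corresponding $n\times n$ restriction map is exactly one-dimensional, pinning down $y_0\in\mathbb{P}^{n-1}$ uniquely. The hypotheses that $X$ is nonlinear and embedded by a complete very ample linear system are essential at this step: for $X=\mathbb{P}^n\subset\cpn$ a linear subspace one has $\mathbb{T}_{x_0}(X)=X$ for all $x_0$ and a direct check via rank-deficient $n\times(n+1)$ matrices gives $\delta(\xhyp)=1$; the nonlinearity together with the very ample complete linear system guarantees that the Gauss map $x_0\mapsto\mathbb{T}_{x_0}(X)$ is nonconstant and that generic transverse sections behave as expected, ruling out a dimension collapse of $\mathcal{I}$ onto a proper subvariety of $\widetilde{P}$. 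Combined with the upper codimension bound, this yields $\dim Z^{\vee}=\dim\widetilde{P}-1$, i.e.\ $\delta(\xhyp)=0$.
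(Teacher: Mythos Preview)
The paper itself does not prove this proposition; it simply cites Weyman--Zelevinsky and Zak. So there is no ``paper's own proof'' to compare against, and your attempt at a self-contained argument is a different undertaking from what the paper does.

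Your setup is correct: the conormal variety has the right dimension, and your conditions $(\mathrm{a})$ and $(\mathrm{b})$ are exactly the tangency conditions for $H_A$ along the Segre product. The counterexample $X=\mathbb{P}^n$ is also correct (the dual is then the locus of $n\times(n+1)$ matrices of rank $\le n-1$, which has codimension $2$).

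There is, however, a genuine gap at the ``main obstacle'' step. You want to show $\pi$ is generically finite, and you argue that for generic $A$ in the image only one $x_0\in X\cap L_A$ exhibits the rank drop, appealing to the Gauss map being nonconstant and to ``generic transverse sections behaving as expected.'' This is not a proof. Nonconstancy of the Gauss map of $X$ does not by itself control the dual defect --- smooth scrolls, for instance, have nonconstant Gauss map and positive defect --- and you have given no mechanism linking nonconstancy of the Gauss map of $X$ to zero defect of $X\times\mathbb{P}^{n-1}$. The assertion that exactly one point of $X\cap L_A$ has the rank drop, and that the drop is by exactly one, is precisely the statement $\delta(\xhyp)=0$ in disguise; asserting it for ``generic $A$ in the image'' presupposes that the image is already a hypersurface.

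What the cited references actually supply is structural input you are missing. Weyman--Zelevinsky give (roughly) $\delta(X\times\mathbb{P}^{n-1})=\max(0,\delta(X)-(n-1))$, reducing the problem to a bound on $\delta(X)$ itself; Zak's theorem on tangencies then gives $\delta(X)\le n-1$ for nonlinear $X$ embedded by a complete linear system. If you want to make your direct argument work, you need to replace the hand-wave in the last paragraph with one of these ingredients, or with an explicit construction of a tangent hyperplane $H_A$ whose contact locus on $\xhyp$ is a single reduced point.
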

\begin{remark} \emph{The reader should observe that $X$ is not required to be smooth .}
\end{remark}

It follows from Proposition \ref{cayleytrick} that there exists a nonconstant homogeneous polynomial 
 \begin{align}\label{hyper}
 { \Delta_{\xhyp}\in \mathbb{C}[M_{n\times (N+1)}]}^{SL(n,\mathbb{C})}\ ,
 \end{align}

which we shall call the \textbf{\emph{X-hyperdiscriminant}}, such that
\begin{align}
\{\Delta_{\xhyp}=0\}=(\xhyp)^{\vee} \ .
\end{align}
To save space we shall sometimes let
\begin{align}
R(X):=R_X \ , \ \Delta(X):= \Delta_{\xhyp} \ .
\end{align}

\begin{remark}
\emph{For further information on the hyperdiscriminant the reader is referred to \cite{paul2012} Section 2.2 pg. 270. The two crucial properties are that $\xhyp$ is always dually non-degenerate and that $\Delta_{\xhyp}$ encodes only the Ricci curvature of $X\ra \cpn$.}
\end{remark}

The obvious task is to compute the degree of this polynomial .

\begin{proposition}\label{degreeofhyp} (see \cite{paul2012} Proposition 5.7) \emph{Assume $X$ is {smooth}. Then the degree of the hyperdiscriminant is given as follows}
\begin{align}
\deg(\Delta):=\deg(\Delta_{\xhyp}) =n(n+1)d-d\mu \in \mathbb{Z}_+\ .
\end{align}
\end{proposition}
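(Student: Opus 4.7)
The identity for $\deg(\Delta_{\xhyp})$ is obtained by applying the Katz--Kleiman class formula to the smooth dually non-degenerate variety $\xhyp$ (Proposition \ref{cayleytrick}), and then decomposing the resulting integral via the K\"unneth/Whitney product formulas across the two factors.

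First, since Proposition \ref{cayleytrick} yields $\delta(\xhyp)=0$, the defining polynomial $\Delta_{\xhyp}$ is an irreducible generator of the ideal of the hypersurface $(\xhyp)^{\vee}$, so $\deg(\Delta_{\xhyp})=\deg\bigl((\xhyp)^{\vee}\bigr)$. I would then invoke the Katz formula: for a smooth $Z^{m}\subset\mathbb{P}^{M}$ with $\delta(Z)=0$,
\begin{align*}
\deg(Z^{\vee}) \;=\; \int_{Z} c_{m}\bigl(J^{1}(\mathcal{O}_{Z}(1))\bigr),
\end{align*}
where the first jet bundle fits into the short exact sequence
\begin{align*}
0 \;\longrightarrow\; \Omega_{Z}^{1}(1) \;\longrightarrow\; J^{1}(\mathcal{O}_{Z}(1)) \;\longrightarrow\; \mathcal{O}_{Z}(1) \;\longrightarrow\; 0,
\end{align*}
so that $c\bigl(J^{1}(\mathcal{O}_{Z}(1))\bigr)=(1+H)\cdot c\bigl(\Omega_{Z}^{1}(1)\bigr)$.

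Second, I would specialize to $Z=\xhyp$, $m=2n-1$, with hyperplane class $H=\pi_{1}^{*}H_{1}+\pi_{2}^{*}H_{2}$ pulled back from the Segre factors, where $H_{1}=c_{1}(\mathcal{O}_{X}(1))$ and $H_{2}=c_{1}(\mathcal{O}_{\mathbb{P}^{n-1}}(1))$. By Whitney and the Euler sequence on $\mathbb{P}^{n-1}$,
\begin{align*}
c(\Omega_{Z}^{1}) \;=\; \pi_{1}^{*}c(\Omega_{X}^{1})\cdot\pi_{2}^{*}c(\Omega_{\mathbb{P}^{n-1}}^{1}) \;=\; \pi_{1}^{*}c(\Omega_{X}^{1})\cdot(1-H_{2})^{n}.
\end{align*}
Expanding $(1+H)\cdot c(\Omega_{Z}^{1}(1))$ through total degree $2n-1$ and integrating reduces, via K\"unneth, to reading off the coefficient of $H_{1}^{n}H_{2}^{n-1}$, which pairs against $[\xhyp]$ to give $d$.

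Third, I would collect terms. The ``topological'' contribution (setting all $c_{i}(\Omega_{X}^{1})$ to zero) is a pure binomial expression in $H_{1},H_{2}$ and produces the leading $n(n+1)d$. The linear piece involving $c_{1}(\Omega_{X}^{1})=K_{X}$ produces $-d\mu$ where one \emph{defines} $d\mu := -K_{X}\cdot H_{1}^{n-1}$ (so $\mu$ is a slope-type invariant of $(X,\mathcal{O}_X(1))$). All remaining contributions from higher Chern classes $c_{i}(\Omega_{X}^{1})$, $i\geq 2$, must collapse identically against the $H_{1}^{n}H_{2}^{n-1}$ projection. A check at $n=1$ recovers Pl\"ucker's classical formula $\deg(\Delta_X)=d(d-1)+(2g-2)+\ldots=2d+2g-2$ from $2d-d\mu$ with $d\mu=2-2g$, confirming the bookkeeping.

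\textbf{Main obstacle.} The delicate step is the combinatorial verification in step three that the coefficients of every $c_{i\geq 2}(\Omega_{X}^{1})$ vanish after integration, so that the final answer is linear in $K_{X}\cdot H_{1}^{n-1}$ and admits the clean closed form $n(n+1)d-d\mu$. This is a concrete binomial identity driven by the factor $(1-H_{2})^{n}$ together with $H_{2}^{n}=0$ on $\mathbb{P}^{n-1}$, but it must be carried out carefully, and it is where the smoothness hypothesis on $X$ is essential --- both for Whitney's formula and for the existence of the jet bundle $J^{1}(\mathcal{O}_{Z}(1))$ as a genuine vector bundle on $Z$.
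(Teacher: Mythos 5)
Your overall strategy is the right one, and it is essentially how the cited result is proved: the paper itself offers no argument for Proposition \ref{degreeofhyp} beyond the reference to \cite{paul2012}, and the proof there is exactly the Katz--Kleiman class formula for $\deg\bigl((\xhyp)^{\vee}\bigr)$ with respect to the Segre polarization $H=\pi_1^*H_1+\pi_2^*H_2$, followed by the Whitney/K\"unneth decomposition and the Euler sequence on $\mathbb{P}^{n-1}$. The ``main obstacle'' you single out is correctly identified and does work out: writing $s_a:=\int_Xc_a(T_X)\cdot H_1^{\,n-a}$, the class formula applied to the $(2n-1)$-fold $\xhyp$ gives
\begin{align*}
\deg\bigl((\xhyp)^{\vee}\bigr)=\sum_{a\geq 0}(-1)^a\Bigl(\sum_{b=0}^{n-1}(-1)^b(2n-a-b)\binom{n}{b}\binom{2n-1-a-b}{n-1-b}\Bigr)s_a\ ,
\end{align*}
and the inner sum equals $n(n+1)$ for $a=0$, equals $n$ for $a=1$, and vanishes for every $a\geq 2$, so the degree is $n(n+1)d-n\,c_1(X)\cdot H_1^{\,n-1}$.

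The one genuine error is your normalization of $\mu$. You ``define'' $d\mu:=-K_X\cdot H_1^{\,n-1}$, but $\mu$ is already fixed in the paper as the average scalar curvature of $\om_{FS}|_X$, and since $\int_XS(\om)\,\om^n=n\int_X\mathrm{Ric}(\om)\wedge\om^{n-1}$ while $V=\int_X\om^n=d$, one has $d\mu=n\,c_1(X)\cdot H_1^{\,n-1}=-n\,K_X\cdot H_1^{\,n-1}$. The Chern-class computation produces the coefficient $-n$ on $c_1(T_X)\cdot H_1^{\,n-1}$, not $-1$, so your identification is off by a factor of $n$ and would yield a formula disagreeing with the proposition for every $n\geq 2$. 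Your $n=1$ Pl\"ucker check cannot detect this, because the offending factor is $n=1$ there. With the correct normalization $d\mu=n\,c_1(X)\cdot H_1^{\,n-1}$ the two expressions agree and your argument closes, matching the proof in \cite{paul2012}.
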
 
In the preceding proposition $\mu$ denotes, as usual, the average of the scalar curvature of ${\om_{FS}}|_X$.  For the algebraic geometer this number is essentially the subdominant coefficient of the Hilbert polynomial of $X$. 

So far, to a nonlinear projective variety $X\ra\cpn$ we have associated two polynomials: $R(X)$ and $\Delta(X)$.
 Translation invariance of the Mabuchi energy forces us to {normalize the degrees} of  these polynomials. From this point on we are interested in the pair 
\begin{align} 
\big(  R(X)^{\deg(\Delta )}  \ ,\  \Delta (X)^{\deg(R )}  \big)\ .
  \end{align}
 Below we shall let $r$ denote their \emph{common} degree 
 \begin{align}
 r=\deg(\Delta )\deg(R )=d(n+1)(n(n+1)d-d\mu) \ .
 \end{align}

 We summarize this discussion as follows .  Given a partition $\beta_{\bull}$ with $N$ parts let $\mathbb{E}_{\beta_{\bull}}$ denote the corresponding irreducible  $G$ module. 
   Let $X\ra\cpn$ be a linearly normal complex projective variety. We make the associations
\begin{align}
\begin{split}
& X  \rightarrow  R(X)^{\deg(\Delta)} \in \elam\setminus\{0\} \ , \ (n+1)\lambda_{\bull}= \big(\overbrace{ {r} , {r} ,\dots, {r}}^{n+1},\overbrace{0,\dots,0}^{N-n}\big) \ .\\
\ \\
& X \rightarrow  \Delta(X)^{\deg(R)} \in \emu\setminus\{0\} \ , \  n\mu_{\bull}= \big(\overbrace{ {r} , {r} ,\dots, {r}}^{n },\overbrace{0,\dots,0}^{N+1-n}\big) \ . 
\end{split}
\end{align}
 Moreover, the associations are $G$ equivariant:
\begin{align}
 R(\sigma\cdot X)=\sigma\cdot R(X) \ , \ \Delta(\sigma\cdot X)=\sigma\cdot \Delta(X) \ .
 \end{align}
The irreducible modules $\elam$ and $\emu$ admit the following descriptions
\begin{align}
\begin{split}
&\elam\cong  H^0(\mathbb{G}(N-n-1,N), \mathcal{O}\Big(\frac{r}{n+1}\Big))\cong \mathbb{C}_{r}[M_{(n+1)\times (N+1)}]^{SL(n+1,\mathbb{C})} \\
\ \\
& \emu \cong H^0(\mathbb{G}(N-n ,N), \mathcal{O}\Big(\frac{r}{n}\Big))\cong \mathbb{C}_{r}[M_{n\times (N+1)}]^{SL(n,\mathbb{C})}\ . 
\end{split}
\end{align}
\begin{remark}
\emph{Observe that $r$ is divisible by both $n$ and $n+1$. Therefore $\lambull$ and $\mubull$ are actual partitions.}
\end{remark}
Once more we give the definition of a (semi)stable\footnote{This is really K-stability as it \emph{should} have been defined.} projective variety.\newline

\noindent{\textbf{Definition.}}  Let $X\ra\cpn$ be an irreducible, $n$-dimensional, linearly normal complex projective variety. Then $X$ is \emph{\textbf {(semi)stable}}  if and only if 
\begin{align}
(R^{\ \deg(\Delta)}\ , \ \Delta^{\deg(R)}) 
\end{align}
 is a (semi)stable pair for the action of $G$.

 \section{Equivariant Extensions of Rational Maps}\label{equivexts}
 In this section we follow the first few paragraphs of \cite{deconcini87}, our primary goal is to put into context our  notion(s) of semistability  (Definitions  \ref{semistable}, \ref{stable} and \ref{dominate} ) and to give a complete proof of the indispensable \emph{numerical criterion} for pairs (see Theorem \ref{summary}) .

 To begin, let $G$ be an algebraic group. $H\leq G$ a Zariski closed (possibly finite) subgroup. Let $\mathcal{O}$ denote the algebraic homogeneous space $G/H$. The definition of semistable pair arises immediately upon studying equivariant completions of the space $\mathcal{O}$.  
\begin{definition}
\emph{An \textbf{\emph{embedding}} of $\mathcal{O}$ is a $G$ variety $X$ together with a $G$-equivariant embedding $i:\mathcal{O}\ra X$ such that $i(\mathcal{O})$ is an open dense orbit of $X$.}
\end{definition}
  $[\sigma]=\sigma H$ denotes the associated $H$ coset for any $\sigma\in G$. Then an embedding of $\mathcal{O}$ has a natural basepoint given by $o:=i([e])$ and we have that
\begin{align}
i(\mathcal{O})=G\cdot o \ ,\ \overline{G\cdot o}=X\ .
\end{align}
Let $(X_1,i_1)$ and $(X_2,i_2)$ be two embeddings of $\mathcal{O}$.  We recall the following well established notion.
\begin{definition}
\emph {A \textbf{\emph{morphism}} $\varphi$ from $(X_1,i_1)$ to $(X_2,i_2)$ is a $G$ equivariant regular map \newline $\varphi:X_1\ra X_2$ such that the diagram
\[
 \xymatrix{
  &X_1 \ar[dd]^{\varphi}  \\
 \mathcal{O}\ar[ru]^{i_1}\ar[rd]_{i_2}&\\
 & X_2  }
 \]
commutes. If a morphism  $\varphi$ exists we write $(X_1,i_1)\succsim (X_2,i_2)$ and we say that $(X_1,i_1)$ \textbf{\emph{dominates}} $(X_2,i_2)$. }
\end{definition}
\begin{remark}\emph{Observe that if a morphism  exists it is unique.}\end{remark}

Let $(X_1,i_1)$ and $(X_2,i_2)$ be two embeddings of $\mathcal{O}$ such that $(X_1,i_1)\succsim (X_2,i_2)$. Assume that these embeddings are both projective (hence complete) with very ample linearizations
\begin{align}
\mathbb{L}_1\in \mbox{Pic}(X_1)^G\ , \ \mathbb{L}_2\in \mbox{Pic}(X_2)^G 
\end{align}
 satisfying
\begin{align}
\varphi^*(\mathbb{L}_2)\cong \mathbb{L}_1 \ .
\end{align}
This is essentially Definition 1.2.1 of \cite{alexeev-brion2006}. Observe that the induced map of $G$ modules
\begin{align}
\varphi^*:H^0(X_2,\mathbb{L}_2) \ra H^0(X_1,\mathbb{L}_1) 
\end{align}
is injective, hence its dual map
\begin{align}
(\varphi^*)^t:H^0(X_1,\mathbb{L}_1)^{\vee}\ra H^0(X_2,\mathbb{L}_2)^{\vee}
\end{align}
is surjective and gives a rational map on the projectivizations of these spaces. The whole set up may be pictured as follows
\[
 \xymatrix{
 & X_1 \ar[dd]^{\varphi}\ar@{^{(}->}[r]^>>>{f_1 }&\  \mathbb{P}(H^0(X_1,\mathbb{L}_1)^{\vee})\ar@{-->}[dd]^{(\varphi^*)^t} \\
 \mathcal{O}\ar[ru]^{i_1}\ar[rd]_{i_2}& &\\
 & X_2 \ar@{^{(}->}[r]^>>>{f_2}& \mathbb{P}(H^0(X_2,\mathbb{L}_2)^{\vee})}
 \]
We isolate some features of this situation.
\begin{enumerate}
\item There are $u_i \in H^0(X_i,\mathbb{L}_i)^{\vee}\setminus\{0\}$ such that $X_i\cong f_i(X_i)=\overline{G\cdot [u_i]}$ $(i=1,2)$ . \\
\ \\
\item $(\varphi^*)^t(u_1)=u_2$ \ . \\
\ \\
\item $\mbox{Span}(G\cdot u_i )=H^0(X_i,\mathbb{L}_i)^{\vee}$ \ . \\
\ \\
\item The map $(\varphi^*)^t:G\cdot [u_1]\ra G\cdot [u_2]$ extends to a regular map between the Zariski closures of these orbits.
\end{enumerate}

We abstract (1)-(4) as follows.
Let $G$ be a complex reductive linear algebraic group.  We consider pairs $(\mathbb{E}; u)$ such that the linear span of $G\cdot u$ coincides with $\mathbb{E}$. Recall from the introduction that for any vector space $\mathbb{E}$ and any $u\in \mathbb{E}\setminus\{0\}$ we  let $[u]\in\mathbb{P}(\mathbb{E})$ denote the line through $u$. If $\mathbb{E}$ is a $G$ module define $\mathcal{O}_{u}:=G\cdot [u]\subset \mathbb{P}(\mathbb{E})$ the projective orbit of $[u]$ . Recall that $\overline{\mathcal{O}}_{u}$ denotes the Zariski closure of this orbit.

All of the author's work on the problem of characterizing a lower bound on the K-energy map of  a polarized manifold revolves around the following definition. The author first learned of this notion from the short note \cite{smirnov2004} .  

\begin{definition}\label{dominate} \emph{ 
$(\mathbb{E}; u)$  \textbf{\emph{dominates}} $(\mathbb{W}; w)$   
 if and only if there exists $\pi\in Hom(\mathbb{E},\mathbb{W})^G$ such that
$ \pi(u)=w$ and the induced  rational map  
$\pi:\mathbb{P}(\mathbb{E}) \dashrightarrow  \mathbb{P}(\mathbb{W})$
restricts to a regular map}  
$\pi:\overline{\mathcal{O}}_{u}\ra \overline{\mathcal{O}}_{w} \ $ \emph{between the Zariski closures of the orbits.}
  \end{definition}
The situation may be pictured as follows.
\[
 \xymatrix{
\mathcal{O}_u\ar[r]^{i_u}\ar[d]^{\pi} &  \overline{\mathcal{O}}_u\ar[d]^{\pi}\ar@{^{(}->}[r] & \mathbb{P}(\mathbb{E})\ar@{-->}[d]^{\pi} \\
 \mathcal{O}_w\ar[r]^{i_w} &  \overline{\mathcal{O}}_w\ar@{^{(}->}[r] & \mathbb{P}(\mathbb{W})}
 \]
Observe that the restriction of the map $\pi$  to $\overline{\mathcal{O}}_u$ is regular if and only if the following holds
  \begin{align}\label{disjoint}
 \qquad  \overline{\mathcal{O}}_u\cap \mathbb{P}(\ker \pi)=\emptyset \ .
\end{align}
Moreover one sees that the index $[G_u\ : \ G_w]$ is finite. In many examples one simply has $\mathcal{O}_u=\mathcal{O}_w$ .

 When  $(\mathbb{E}; u)$ dominates $(\mathbb{W}; w)$ we shall write $(\mathbb{E}; u)\succsim (\mathbb{W}; w)$. In this circumstance we observe that   
\begin{align} 
 & \pi(\mathbb{E})=\mathbb{W} \ \mbox{and} \ \mathbb{E}=\ker(\pi)\oplus \mathbb{W} \ \mbox{ ($G$-module splitting) } \ .
\end{align}
Therefore we may identify $\pi$ with projection onto $\mathbb{W}$ and $u$ decomposes as follows
\begin{align}
u=(u_{\pi},w) \ , \ \ker(\pi)\ni u_{\pi}\neq 0 \ 
\end{align}
where $u_{\pi}$ is the projection of $u$ onto the kernel of $\pi$. 
Again the reader can easily check that $(\ref{disjoint})$ is equivalent to
\begin{align}\label{project}
 \qquad \overline{G\cdot[(u_{\pi},w)]}\cap\overline{G\cdot[(u_{\pi},0)]}=\emptyset \quad \mbox{( Zariski closure in  $\mathbb{P}(\ker(\pi)\oplus\mathbb{W}$ ) )} \ .
\end{align}
Now we simply let
\begin{align}
  \mathbb{V}:=\ker{\pi} \ , \ v:=u_{\pi} 
  \end{align}
and we reformulate (\ref{project}) as follows. 
Given $(v\in \mathbb{V}\setminus \{0\}\ ;\ w\in \mathbb{W}\setminus \{0\})$ we consider the projective orbits
\begin{align}
\mathcal{O}_{vw}:=G\cdot[(v,w)]\subset \mathbb{P}(\mathbb{V}\oplus\mathbb{W}) \ , \ \mathcal{O}_{v}:=G\cdot[(v,0)]\subset \mathbb{P}(\mathbb{V}\oplus\{0\})
\end{align}
The purpose of the preceding  discussion is to explain and motivate Definition \ref{semistable}. We repeat it here.\newline
 \noindent\textbf{Definition.}
  The pair $(v,w)$ is \textbf{\emph{semistable}} if and only if  $ \overline{\mathcal{O}}_{vw}\cap\overline{\mathcal{O}}_{v}=\emptyset $ .
    
  \subsection{Toric Morphisms and T-Semistability}\label{toricmorphs}
In this subsection we study the semistability / dominance  relation $(X_1,i_1)\succsim (X_2,i_2)$ in the special case $G\cong T\cong (\mathbb{C}^*)^n$, an algebraic torus.  For simplicity we assume that the open orbits coincide with $G$, in other words, we assume that the stabilizers are both trivial.
 
 \begin{definition} 
\emph{The pair $(v,w)$ is \emph{\textbf{$T$-semistable}} provided the torus orbit closures are disjoint }
   \begin{align}
   \overline{T\cdot [(v,w)]}\cap \overline{T\cdot [(v,0)]}=\emptyset \ .
   \end{align}
\end{definition}

One expects  that $T$-semistability  admits a description in terms of weight polytopes which generalizes the numerical criterion of Geometric Invariant Theory (see \cite{dolgachev} pg.137 Theorem 9.2). This is indeed the case. To begin the discussion let's denote our torus by $H$. Let $\chi\in M_{\mathbb{Z}}$ be an $H$ character and $u\in N_{\mathbb{Z}}$ an algebraic one parameter subgroup satisfying $<\chi,u>=1$ . 
\[ 
\xymatrix{
1\ar[r]&T:=\mbox{Ker}(\chi)\ar@{^{(}->}[r]&H\ar[r]^{\chi}   & \mathbb{C}^*\ar@/_1pc/[l]\ar[r]&1}
\]
Let $\mathscr{A}, \mathscr{B} \subset M_{\mathbb{Z}}$ be (nonempty) finite subsets satisfying $a(u(\alpha))\equiv 1 , b(u(\alpha))\equiv 1$ for all $a\in \mathscr{A}$ and $b\in \mathscr{B}$ and all $\alpha \in \mathbb{C}^*$ . Define
\begin{align}
\mathscr{A}_+:=\{\chi\}\cup \{a+\chi\ |\ a\in \mathscr{A}\ \}\ , \ \mathscr{B}_+:=\{\chi\}\cup \{b+\chi\ |\ b\in \mathscr{B}\ \}\ .
\end{align}
There are two naturally associated $H$ representations $\mathbb{C}^{\mathscr{A}_+}$ and $\mathbb{C}^{\mathscr{B}_+}$ given by
\begin{align}
\begin{split}
&\mathbb{C}^{\mathscr{A}_+}=\mbox{span}\{\mathbf{e}_{\chi}\ ,\ \mathbf{e}_{\chi+a}\ | \ a\in \mathscr{A}\}\ , \  \mathbb{C}^{\mathscr{B}_+}=\mbox{span}\{\mathbf{f}_{\chi}\ ,\ \mathbf{f}_{\chi+b}\ | \ b\in \mathscr{B}\} \\
\ \\
&h\cdot (c_{\chi}\mathbf{e}_{\chi}+\sum_{a\in\mathscr{A}}c_a\mathbf{e}_{\chi+a})=\chi(h)(c_{\chi}\mathbf{e}_{\chi}+\sum_{a\in\mathscr{A}}a(h)c_a\mathbf{e}_{\chi+a})\\
\ \\
&h\cdot (c_{\chi}\mathbf{f}_{\chi}+\sum_{b\in\mathscr{B}}c_b\mathbf{f}_{\chi+b})=\chi(h)(c_{\chi}\mathbf{f}_{\chi}+\sum_{b\in\mathscr{B}}b(h)c_b\mathbf{f}_{\chi+b})\\
\ \\
& \mathbb{C}^{\mathscr{A}_+}\ni v:= \mathbf{e}_{\chi} +\sum_{a\in\mathscr{A}} 
\mathbf{e}_{\chi+a} \ ,\ \mathbb{C}^{\mathscr{B}_+}\ni w:= \mathbf{f}_{\chi} +\sum_{b\in\mathscr{B}} \mathbf{f}_{\chi+b}\ .
\end{split}
\end{align}
We define $T$ equivariant maps into projective spaces  in the usual manner:
\begin{align}
\begin{split}
&\varphi_{\mathscr{A}}:T\ra \mathbb{P}^{|\mathscr{A}|}:=\mathbb{P}(\mathbb{C}^{\mathscr{A}_+}) \ , \ \varphi_{\mathscr{A}}(t):=\big[ \mathbf{e}_{\chi} +\sum_{a\in\mathscr{A}}a(t)\mathbf{e}_{\chi+a}\big] \\
\ \\
&\varphi_{\mathscr{B}}:T\ra \mathbb{P}^{|\mathscr{B}|}:= \mathbb{P}(\mathbb{C}^{\mathscr{B}_+})\ , \ \varphi_{\mathscr{B}}(t):=\big[ \mathbf{f}_{\chi} +\sum_{b\in\mathscr{B}}b(t)\mathbf{f}_{\chi+b}\big] \\
\end{split}
\end{align}
Let $\pi\in Hom(\mathbb{C}^{\mathscr{A}_+},\mathbb{C}^{\mathscr{B}_+})^H$ be such that $\pi(v)=w$. Observe that these requirements force that the following conditions are met
\begin{align}
\mathscr{B}\subseteq\mathscr{A} \ \mbox{and}\ \ker(\pi)=\bigg\{\sum_{a\in \mathscr{A}\setminus\mathscr{B}}c_a\mathbf{e}_{\chi+a}\ |\ c_a\in \mathbb{C}\ \bigg\} \ .
\end{align}
Then we have exactly the same set up as before
\[
 \xymatrix{
 & X_{\mathscr{A}}:=\overline{\varphi_{\mathscr{A}}(T)}\ar[dd]^{\pi}\ar@{^{(}->}[r] & \mathbb{P}^{|\mathscr{A}|}\ar@{-->}[dd]^{\pi} \\
  {T}\ar[ru]^{\varphi_{\mathscr{A}}}\ar[rd]_{\varphi_{\mathscr{B}}}& &\\
 &  X_{\mathscr{B}}:=\overline{\varphi_{\mathscr{B}}(T)}\ar@{^{(}->}[r] & \mathbb{P}^{|\mathscr{B}|}}
 \]
 
 Recall from our previous discussion that the map $\pi$ extends to $X_{\mathscr{A}}\setminus \varphi_{\mathscr{A}}(T)$ if and only if $(X_{\mathscr{A}}\setminus \varphi_{\mathscr{A}}(T))\cap \mathbb{P}(\mbox{ker}(\pi))=\emptyset $. To test for this latter condition we study 
\begin{align}
\lambda^u(0)\cdot [v]=\varphi_{\mathscr{A}}(\lambda^u(0))\quad \mbox{for all $u\in N_{\mathbb{Z}}$} \ ,
\end{align}
where $\lambda^u$ is the one parameter subgroup corresponding to $u$.
\begin{align}
\lambda^u(t)\cdot v= \sum_{a\in\mathscr{A}\setminus\mathscr{B}}t^{(u,a)}\mathbf{e}_{\chi+a}+\big(\mathbf{e}_{\chi}+\sum_{b\in \mathscr{B}}t^{(u,b)}\mathbf{e}_{\chi+b}\big) \ .
\end{align}
 It follows at once that $\varphi_{\mathscr{A}}(\lambda^u(0))\in \mathbb{P}(\mbox{ker}(\pi))$ if and only if
 \begin{align}
 \min_{a\in\mathscr{A}\setminus\mathscr{B}}(u,a) <\ \min\{0,\min_{b\in\mathscr{B}}(u,b) \} \ .
 \end{align}
Therefore a necessary condition that $\pi$ extend to the closure of the torus orbit is the following
\begin{align}
\min\{0,\min_{b\in\mathscr{B}}(u,b) \}\leq \min_{a\in\mathscr{A}\setminus\mathscr{B}}(u,a) \ \mbox{for all $u\in N_{\mathbb{Z}}$} \ .  
\end{align}
 In fact, this necessary condition is also sufficient and can be formulated as follows.
\begin{theorem}\label{polytope}
\emph{The map $\pi$ extends to $X_{\mathscr{A}}$ if and only if $$\mbox{\textbf{conv}}(\mathscr{A}\setminus\mathscr{B})\subseteq \mbox{\textbf{conv}}(\{0\}\cup \mathscr{B}) .$$}
\end{theorem}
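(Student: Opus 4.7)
The plan is to complete the equivalence begun in the excerpt by (A) upgrading the one-parameter subgroup inequality from necessary to sufficient for extension, and (B) recognizing it as the support-function characterization of the asserted polytope containment.

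For step (A), the computation just before the theorem already shows that the limit $\varphi_{\mathscr{A}}(\lambda^u(0))$ lands in $\mathbb{P}(\ker\pi)$ precisely when the one-parameter subgroup inequality fails for $u$. I would upgrade this from necessity to sufficiency by appealing to the Hilbert--Mumford/Iwahori density for torus actions: every point of $\overline{T\cdot [v]}\setminus T\cdot [v]$ lies in the $T$-orbit of some 1-PS limit $\lim_{t\to 0}\lambda^u(t)\cdot [v]$ with $u\in N_{\mathbb{Z}}$. (Equivalently, in the toric description of $X_{\mathscr{A}}$, the boundary $T$-orbits are indexed by the faces of the lattice polytope $\mbox{\textbf{conv}}(\{0\}\cup\mathscr{A})$, each face being the locus where some $u\in N_{\mathbb{R}}$ attains its minimum.) Since $\ker\pi$ is $T$-invariant and a generic orbit point misses $\mathbb{P}(\ker\pi)$ (its $\mathbf{e}_\chi$-coordinate is nonzero), the disjointness $\overline{T\cdot [v]}\cap\mathbb{P}(\ker\pi)=\emptyset$ is equivalent to the inequality
\begin{align*}
\min\{0,\min_{b\in\mathscr{B}}(u,b)\}\leq \min_{a\in\mathscr{A}\setminus\mathscr{B}}(u,a) \qquad \mbox{for all $u\in N_{\mathbb{Z}}$} \ .
\end{align*}

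For step (B), continuity in $u$ together with $\mathbb{Q}$-density of $N_{\mathbb{Z}}$ in $N_{\mathbb{R}}$ (and positive homogeneity of the two sides) promotes the inequality to all $u\in N_{\mathbb{R}}$. Introduce the support function of a nonempty finite set $\mathscr{C}\subset M_{\mathbb{Z}}$,
\begin{align*}
h_{\mathscr{C}}(u):=\min_{c\in\mathscr{C}}(u,c)=\min_{m\in \mbox{\textbf{conv}}(\mathscr{C})}(u,m) \ .
\end{align*}
The standard fact that a compact convex polytope $Q\subset M_{\mathbb{R}}$ coincides with the intersection of its supporting half-spaces $\{m:(u,m)\geq h_{Q}(u)\}_{u\in N_{\mathbb{R}}}$ yields, for polytopes $P,Q\subset M_{\mathbb{R}}$, that $P\subseteq Q$ if and only if $h_{P}(u)\geq h_{Q}(u)$ for every $u\in N_{\mathbb{R}}$. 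Applying this with $P=\mbox{\textbf{conv}}(\mathscr{A}\setminus\mathscr{B})$ and $Q=\mbox{\textbf{conv}}(\{0\}\cup\mathscr{B})$ reproduces exactly the inequality from step (A), completing the equivalence.

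The only nonroutine input is the torus Hilbert--Mumford/Iwahori density used in step (A); this is the crucial mechanism by which a one-parameter subgroup test captures all boundary points of the torus orbit. The remainder of the argument is a straightforward exercise in convex duality via support functions.
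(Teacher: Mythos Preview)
Your proposal is correct and follows essentially the same route as the paper: both reduce the extension question to the one-parameter subgroup inequality via the fact that every boundary point of the torus orbit closure lies in the $T$-orbit of some $\lambda^u(0)\cdot[v]$, and then identify that inequality with the polytope containment. The only difference is that you invoke the torus Hilbert--Mumford/Iwahori density as a known result, whereas the paper isolates it as Proposition~\ref{orbitcone} (with affine version Proposition~\ref{richardsontori}) and supplies a complete proof following Richardson's argument; your explicit support-function translation in step (B) is left implicit in the paper.
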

 
 The heart of the matter is to prove the following proposition which is closely related to the \emph{Orbit-Cone} correspondence of Toric Geometry (see \cite{coxbook}) .
\begin{proposition}\label{orbitcone}
\emph{Let $[y]\in X_{\mathscr{A}}\setminus \varphi_{\mathscr{A}}(T)$. Then there exists a $u\in N_{\mathbb{Z}}$ and $\tau\in T$ such that }
\begin{align}
\varphi_{\mathscr{A}}(\lambda^u(0))=\tau\cdot[y]\ .  
\end{align}
\end{proposition}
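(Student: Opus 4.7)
The plan is to combine the valuative criterion of properness on $X_{\mathscr{A}}$ with the standard decomposition of a $K$-point of a torus into a cocharacter times an $R$-point.

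First, because $[y]\in X_{\mathscr{A}}\setminus\varphi_{\mathscr{A}}(T)$ lies in the closure of the dense open orbit and $X_{\mathscr{A}}$ is projective, the curve selection lemma produces a formal arc $\gamma:\mathrm{Spec}(R)\to X_{\mathscr{A}}$, where $R=\mathbb{C}[[t]]$ with fraction field $K=\mathbb{C}((t))$, such that $\gamma(0)=[y]$ and $\gamma|_{\mathrm{Spec}(K)}$ factors through the open orbit $\varphi_{\mathscr{A}}(T)$. Since the stabilizer of $[v]$ in $T$ is trivial by the standing hypothesis of \S\ref{toricmorphs}, the map $\varphi_{\mathscr{A}}:T\to\varphi_{\mathscr{A}}(T)$ is a $T$-equivariant isomorphism, so $\gamma|_{\mathrm{Spec}(K)}$ lifts uniquely to a $K$-point $g\in T(K)$ with $g\cdot[v]=\gamma|_{\mathrm{Spec}(K)}$.

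Second, I decompose $g$. Writing $T=\mathrm{Spec}\,\mathbb{C}[M_{\mathbb{Z}}]$, a $K$-point of $T$ is a homomorphism $g:M_{\mathbb{Z}}\to K^*$. Post-composition with the $t$-adic valuation $\mathrm{ord}_t:K^*\to\mathbb{Z}$ yields a homomorphism $u:M_{\mathbb{Z}}\to\mathbb{Z}$, i.e.\ an element $u\in N_{\mathbb{Z}}$. Then $\sigma:=\lambda^u(t)^{-1}\cdot g$ satisfies $\mathrm{ord}_t(\sigma(m))=0$ for every $m\in M_{\mathbb{Z}}$, so $\sigma$ extends uniquely to an $R$-point of $T$, whose value at $t=0$ is some $\tau_0\in T$. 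This gives the factorization $g(t)=\sigma(t)\lambda^u(t)$ in $T(K)$. I now have two $R$-points of $X_{\mathscr{A}}$ that agree on the generic fibre: the original arc $\gamma$, and the map $t\mapsto\sigma(t)\cdot(\lambda^u(t)\cdot[v])$, the inner factor being the morphism $\mathbb{A}^1\to X_{\mathscr{A}}$ obtained by extending $t\mapsto\lambda^u(t)\cdot[v]$ from $\mathbb{G}_m$ via properness of $X_{\mathscr{A}}$. By separatedness of $X_{\mathscr{A}}$ (valuative criterion of separatedness) these two $R$-points agree at $t=0$, yielding $[y]=\tau_0\cdot\varphi_{\mathscr{A}}(\lambda^u(0))$. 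Setting $\tau:=\tau_0^{-1}$ gives the stated identity.

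I expect the main obstacle to be the first step: producing the arc $\gamma$ and justifying that the $K$-point of $T$ it represents can be written as a cocharacter $\lambda^u(t)$ times an $R$-point $\sigma(t)$, so that its specialization is genuinely a $T$-translate of a single one-parameter limit $\varphi_{\mathscr{A}}(\lambda^u(0))$. The projectivity (hence properness) of $X_{\mathscr{A}}$ and the trivial-stabilizer hypothesis both enter exactly here. Once this reduction is made, the remainder of the proof is a routine application of separatedness, commutativity of $T$, and the explicit formula for $\lambda^u(t)\cdot v$ displayed just above the statement.
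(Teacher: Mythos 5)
Your argument is correct, but it is genuinely different from the paper's. The paper deduces Proposition \ref{orbitcone} from its affine version, Proposition \ref{richardsontori}, which it proves by Richardson's combinatorial method: letting $\mathscr{B}$ be the support of the limit point, one shows the origin does not lie in the convex hull of the images of $\mathscr{A}\setminus\mathscr{B}$ in $M_{\mathbb{R}}/\mathbb{R}\mathscr{B}$, produces a separating linear functional, perturbs it to a rational one, clears denominators to obtain $u\in N_{\mathbb{Z}}$, and finally identifies $T\cdot f_{\infty}$ with $T\cdot\lambda^u(0)f$ by an equality of supports and an irreducibility/dimension count. You instead run the valuative criterion: an arc $\gamma:\mathrm{Spec}\,\mathbb{C}[[t]]\to X_{\mathscr{A}}$ through $[y]$ with generic point in the open orbit, the decomposition $T(K)=N_{\mathbb{Z}}\times T(R)$ of its lift $g$ into $\sigma(t)\lambda^u(t)$, and separatedness to match the two specializations. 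Both are complete; what each buys is different. Your route is shorter, avoids the convex-geometry and orbit-dimension steps entirely, and makes the existence of $u$ conceptually transparent ($u$ is just the valuation of the arc); the case $u=0$ is automatically excluded since it would put $[y]$ in the open orbit. The paper's route is more explicit about which $u$ works (any integral functional separating $0$ from the projected complement of $\mathrm{supp}(f_\infty)$), and --- more importantly for the logic of the section --- it directly establishes the \emph{affine} statement, Proposition \ref{richardsontori}, which is the form actually invoked later in Corollary \ref{lambda}. Your proof as written leans on projectivity to extend $t\mapsto\lambda^u(t)\cdot[v]$ over $t=0$; to recover the affine version you would replace that step by noting that $\lambda^u(t)\cdot f=\sigma(t)^{-1}\cdot(g\cdot f)$ lies in $\mathbb{V}(\mathbb{C}[[t]])$ because $\sigma\in T(R)$ and $g\cdot f$ is the arc, so the limit exists in $\mathbb{V}$. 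With that one-line addendum your approach subsumes the paper's affine proposition as well.
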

Before we begin the proof we fix some notation and make some preliminary remarks. Let $\mathbb{E}$ be a finite dimensional rational representation of $T$. As before we let $\mathscr{A}$ denote the weights of the representation. We have the eigenspace decompostion
\begin{align}
\mathbb{E}\cong \bigoplus_{a\in \mathscr{A}}\mathbb{E}_a \ .
 \end{align}
We let $m(a)$ denote the multiplicity of the weight $a$
\begin{align}
m(a):=\dim(\mathbb{E}_a) \ .
\end{align}
Without loss of generality we may assume that
\begin{align}
m(a)=1 \ \quad \mbox{for all $a\in \mathscr{A}$ .}
\end{align}
This amounts to the fact that we may assume that $\mathbb{E}$ is isomorphic to
\begin{align}
\mathbb{C}^{\mathscr{A}}:= \mbox{Map}(\mathscr{A}\ , \ \mathbb{C}) \ .
\end{align}
The action is given by
\begin{align}
(\tau\cdot f)(a)=a(\tau)f(a) \quad \mbox{for all $a\in \mathscr{A}$ and $f\in \mathbb{C}^{\mathscr{A}}$ .}
\end{align}
We let $\mathbf{e}_a$ denote the natural basis of $\mathbb{C}^{\mathscr{A}}$
\begin{align}
\mathbf{e}_a(b)= \delta_{ab} \ .
\end{align}
Therefore we may write
\begin{align}
f=\sum_{a\in \mathscr{A}}f(a)\mathbf{e}_a \quad \mbox{for all $f\in \mathbb{C}^{\mathscr{A}}$ .}
\end{align}
Proposition \ref{orbitcone} follows easily from its affine cousin.
\begin{proposition}\label{richardsontori}
\emph{Let $f \in \mathbb{C}^{\mathscr{A}}$. Given any boundary point
\begin{align}
f_{\infty}\in \overline{T\cdot f}\setminus T\cdot f 
\end{align}
there exists a $\tau\in T$ and a $u\in N_{\mathbb{Z}}$ such that
\begin{align}
\lim_{\alpha\ra 0}\lambda^u(\alpha)\cdot f=\tau\cdot f_{\infty}
\end{align}
}
\end{proposition}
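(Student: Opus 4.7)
The plan is to reduce the statement to a routine computation via two standard tools: the curve selection lemma, which produces an arc approaching $f_{\infty}$, and the Cartan (or ``Laurent'') decomposition for $T(\mathbb{C}((s)))$, which extracts a one-parameter subgroup from that arc. First I would reduce to the normalized situation $f(a)=1$ for every $a\in\mathscr{A}(f)$: because rescaling $\mathbf{e}_a$ by the nonzero scalar $f(a)$ commutes with the $T$-action on $\mathbb{C}^{\mathscr{A}}$, this is a $T$-equivariant change of coordinates and does not affect the statement. Set $\mathscr{A}:=\mathscr{A}(f)$.

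Next I would apply the curve selection lemma to the closed subvariety $\overline{T\cdot f}\subset\mathbb{C}^{\mathscr{A}}$ at the point $f_{\infty}$; this produces an analytic arc $c:\Delta\to\overline{T\cdot f}$ with $c(0)=f_{\infty}$ and $c(\Delta^{*})\subset T\cdot f$. Lifting $c|_{\Delta^{*}}$ through the quotient $T\to T\cdot f\cong T/T_{f}$ (after pulling back along a ramified cover $s\mapsto s^{n}$ if necessary, to trivialize the finite part of the stabilizer $T_{f}$) yields an element $\tau(s)\in T(\mathbb{C}((s)))$ with $\lim_{s\to 0}\tau(s)\cdot f=f_{\infty}$. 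This lifting step is the main technical obstacle; once past it the remainder is formal. The key algebraic input is then the decomposition for $T(\mathbb{C}((s)))$: choosing an isomorphism $T\cong(\mathbb{C}^{*})^{r}$, every component $\tau_{i}(s)\in\mathbb{C}((s))^{*}$ factors uniquely as $s^{u_{i}}\gamma_{i}(s)$ with $u_{i}\in\mathbb{Z}$ and $\gamma_{i}\in\mathbb{C}[[s]]^{*}$. Assembling these factorizations produces $\tau(s)=\gamma(s)\cdot\lambda^{u}(s)$ with $u=(u_{1},\dots,u_{r})\in N_{\mathbb{Z}}$ and $\gamma\in T(\mathbb{C}[[s]])$, so that $\gamma(0)\in T$ is a well-defined specialization.

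Finally I would substitute this decomposition into the action. The weight-$a$ coordinate of $\tau(s)\cdot f$ equals $a(\gamma(s))\,s^{(u,a)}$, so the existence of the finite limit $f_{\infty}$ forces $(u,a)\geq 0$ for every $a\in\mathscr{A}$; passing to the limit $s\to 0$ gives
\begin{equation*}
f_{\infty}=\sum_{(u,a)=0}a(\gamma(0))\,\mathbf{e}_{a}=\gamma(0)\cdot\Bigl(\sum_{(u,a)=0}\mathbf{e}_{a}\Bigr)=\gamma(0)\cdot\lim_{\alpha\to 0}\lambda^{u}(\alpha)\cdot f.
\end{equation*}
Setting $\tau:=\gamma(0)^{-1}\in T$ then yields $\tau\cdot f_{\infty}=\lim_{\alpha\to 0}\lambda^{u}(\alpha)\cdot f$, as required. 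The only delicate point in the whole argument is the lifting of the curve $c|_{\Delta^{*}}$ across $T\to T/T_{f}$; everything else is the formal manipulation of Laurent expansions in $s$.
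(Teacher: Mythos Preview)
Your argument is correct and follows a genuinely different route from the paper. The paper proceeds in the Richardson--Birkes style, using only sequences and convex geometry: from $\tau_j\in T$ with $\tau_j\cdot f\to f_\infty$ it reads off $\mathscr{B}:=\mathrm{supp}(f_\infty)$, shows via $|a(\tau_j)|\to 0$ (for $a\notin\mathscr{B}$) that the image of $\mathrm{conv}(\mathscr{A}\setminus\mathscr{B})$ in $M_{\mathbb{R}}/\mathbb{R}\mathscr{B}$ misses the origin, and then uses a separating hyperplane (followed by a rationality step via Cramer's rule) to manufacture $u\in N_{\mathbb{Z}}$ with $(u,\cdot)>0$ on $\mathscr{A}\setminus\mathscr{B}$ and $(u,\cdot)\equiv 0$ on $\mathscr{B}$. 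This forces $\lambda^u(0)\cdot f$ to have support exactly $\mathscr{B}$, but the identification of its $T$-orbit with that of $f_\infty$ is then made only \emph{indirectly}, through an irreducibility-and-dimension comparison of the two orbit closures. Your arc-theoretic approach is more structural and in one respect sharper: the Laurent factorization hands you the witness $\tau=\gamma(0)^{-1}$ explicitly, with no orbit-closure argument needed at the end. The trade-off is that you import heavier tools (curve selection, \'etale lifting) where the paper stays entirely elementary. One detail you should make explicit at the step you yourself flagged: the lift of $c|_{\Delta^*}$ is a priori only a holomorphic map $\Delta^*\to T$, not yet a point of $T(\mathbb{C}((s)))$. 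To obtain meromorphicity at $s=0$ you should split off the connected component $T_f^\circ$ (so that $\mathscr{A}$ generates the character lattice of the quotient up to finite index) and then observe that each $a(\tau(s))=c_a(s)$ extends holomorphically across $0$, which forces every character of $\tau(s)$ to be meromorphic there.
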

The argument is really due to Richardson (see \cite{birkes71}) . We take the opportunity here to provide complete details.  
\begin{proof}
We may assume that $\mbox{supp}(f)=\mathscr{A}$. Let $\mathscr{B}\subset \mathscr{A}$ denote the support of $f_{\infty}$. We write
\begin{align}
f=\sum_{a\in \mathscr{A}\setminus \mathscr{B}}f(a)\mathbf{e}_a+\sum_{b\in \mathscr{B}}f(b)\mathbf{e}_b \ .
\end{align}
Our assumption that $f_{\infty}$ lies in the boundary is equivalent to the existence of a sequence $\tau_j\in T$ satisfying
\begin{align}\label{limit}
\begin{split}
& a(\tau_j)\ra 0 \quad \mbox{for all $a\in \mathscr{A}\setminus \mathscr{B}$ and }\\
\ \\
& b(\tau_j)f(b)\ra f_{\infty}(b)  \ .
\end{split}
\end{align}
Let $ {\mathbb{R}}\mathscr{B}\subset M_{\mathbb{R}}$ denote the real subspace generated by the elements of $\mathscr{B}$. We consider the quotient space
\begin{align}
W:= M_{\mathbb{R}}/  {\mathbb{R}}\mathscr{B}\ .
\end{align}
We consider the projection
\begin{align}
\pi: M_{\mathbb{R}}\ra W \ .
\end{align}
Let $\Delta$ be the convex polytope ( in $W$ ) :
\begin{align}
\Delta := \mbox{convexhull}\{\pi(a)\ |\ a\in \mathscr{A}\setminus \mathscr{B}\} \ .
\end{align}
\begin{claim}
$0\notin \Delta \ $ .
\end{claim}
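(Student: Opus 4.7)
The plan is to argue by contradiction. Suppose that $0 \in \Delta$. Since $\Delta$ is the convex hull of finitely many rational points $\pi(a)$ (for $a \in \mathscr{A}\setminus \mathscr{B}$) in the rational quotient space $W = M_{\mathbb{R}}/\mathbb{R}\mathscr{B}$, and since $0$ is itself a rational point of $W$, I would first show that $0$ admits a \emph{rational} convex representation
\[
0 = \sum_{a \in \mathscr{A}\setminus\mathscr{B}} c_a\, \pi(a), \qquad c_a \in \mathbb{Q}_{\geq 0}, \ \sum_a c_a = 1.
\]
Lifting this back to $M_{\mathbb{R}}$ places $\sum_a c_a a$ in $\mathbb{R}\mathscr{B} \cap M_{\mathbb{Q}} = \mathbb{Q}\mathscr{B}$ (since $\mathscr{B}$ consists of integer vectors), so that one obtains $\sum_{a\in\mathscr{A}\setminus\mathscr{B}} c_a a = \sum_{b\in\mathscr{B}} d_b b$ with $d_b \in \mathbb{Q}$.

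Next, I would clear denominators. Choose a positive integer $N$ so that $n_a := N c_a \in \mathbb{Z}_{\geq 0}$ and $m_b := N d_b \in \mathbb{Z}$. Then $\sum_a n_a = N \geq 1$ and we obtain the integer character identity
\[
\sum_{a \in \mathscr{A}\setminus\mathscr{B}} n_a\, a \ = \ \sum_{b \in \mathscr{B}} m_b\, b \qquad \text{in } M_{\mathbb{Z}}.
\]
Evaluating both characters on the sequence $\tau_j \in T$ converts this additive identity into the multiplicative identity
\[
\prod_{a \in \mathscr{A}\setminus\mathscr{B}} a(\tau_j)^{n_a} \ = \ \prod_{b \in \mathscr{B}} b(\tau_j)^{m_b}.
\]

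The contradiction is now extracted from the asymptotics (\ref{limit}). Since at least one $n_a$ is strictly positive and $a(\tau_j) \to 0$ for every $a \in \mathscr{A}\setminus\mathscr{B}$, the left hand side tends to $0$. On the other hand, for each $b \in \mathscr{B}$ we have $f(b) \neq 0$ (by our reduction $\mathrm{supp}(f) = \mathscr{A}$) and $f_{\infty}(b) \neq 0$ (by definition of $\mathscr{B}$ as the support of $f_{\infty}$), so $b(\tau_j) \to f_{\infty}(b)/f(b)$ is a \emph{nonzero} complex number; each factor $b(\tau_j)^{m_b}$ therefore has a finite nonzero limit, and so does the right hand side. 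This proves the claim. I expect the main obstacle to be the initial rationality/denominator-clearing step, specifically the verification that the convex representation of $0$ and the coefficients $d_b$ on the $\mathbb{R}\mathscr{B}$ side can be taken rational simultaneously; everything after that is bookkeeping with the limit relations in (\ref{limit}).
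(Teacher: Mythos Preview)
Your argument is correct and follows the same overall strategy as the paper: assume $0\in\Delta$, translate the resulting convex relation into a multiplicative identity among characters, and evaluate along the sequence $\tau_j$ from (\ref{limit}) to get a contradiction.

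The one noteworthy difference is that the paper bypasses your rationality and denominator-clearing step entirely. It simply keeps the \emph{real} convex coefficients $r_a\geq 0$, $\sum r_a=1$ and real coefficients $c_b$, and then takes absolute values to obtain
\[
\prod_{a\in\mathscr{A}\setminus\mathscr{B}}|a(\tau)|^{r_a}=\prod_{b\in\mathscr{B}}|b(\tau)|^{c_b},
\]
which is perfectly well-defined for real exponents since $|a(\tau)|,|b(\tau)|>0$. The contradiction then follows exactly as you describe. So the step you flagged as the ``main obstacle'' is in fact unnecessary; your route through $\mathbb{Q}$-coefficients works, but the absolute-value trick is shorter.
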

If the claim is false, then there are real constants $\{r_a\}$ and $\{c_b\}$ where $a\in\mathscr{A}\setminus \mathscr{B}$ and $b\in \mathscr{B}$ such that
\begin{align}
r_a\geq 0 \ ,\ \sum_{a\in \mathscr{A}\setminus \mathscr{B}} r_a=1 
\end{align}
and
\begin{align}
\sum_{a\in \mathscr{A}\setminus \mathscr{B}}r_aa=\sum_{b\in \mathscr{B}}c_bb\ .
\end{align}
Hence for all $\tau\in T$ we have
\begin{align}\label{products}
\prod_{a\in \mathscr{A}\setminus \mathscr{B}}|a(\tau)|^{r_a}=\prod_{b\in\mathscr{B}}|b(\tau)|^{c_b} \ .
\end{align}
By substituting the sequence $\{\tau_j\}$ from (\ref{limit}) we derive a contradiction, as the left hand side of  (\ref{products}) tends to zero but the right hand side does not. This completes the proof of the claim.

Therefore (by the Hyperplane Separation Theorem) there is a linear functional
\begin{align}
l:W\ra\mathbb{R}
\end{align}
such that
\begin{align}\label{positive}
l(\pi(a))>0 \ \mbox{for all $a\in \mathscr{A}\setminus \mathscr{B}$ .}
\end{align}
Next let $L_1,L_2,\dots, L_N$ be a $\mathbb{Z}$ basis of $M_{\mathbb{Z}}$ such that $\pi(L_1) , \dots , \pi(L_k)$ is a basis of $W$. Let $\theta_1,\dots,\theta_k$ denote the dual basis. We define the \emph{rational dual} to $W$ by
\begin{align}
W^{\vee}_{\mathbb{Q}}=\mathbb{Q}\theta_1\oplus \mathbb{Q}\theta_2\oplus\dots \oplus \mathbb{Q}\theta_k \ .
\end{align}
By density of $\mathbb{Q}$ we have the following.
\begin{claim} \emph{There is a $g\in W^{\vee}_{\mathbb{Q}}$ such that
\begin{align}
g(\pi(a))>0 \ \mbox{for all $a\in \mathscr{A}\setminus \mathscr{B}$ .}
\end{align}
}
\end{claim}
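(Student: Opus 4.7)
The plan is to exploit the fact that positivity on a finite set is an open condition, so the already-constructed real functional $l$ can be perturbed slightly to a rational one without destroying the inequalities.

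First, I would expand the given real linear functional in the chosen dual basis, writing
\begin{align*}
l = \sum_{i=1}^{k} l_i\,\theta_i, \qquad l_i \in \mathbb{R}.
\end{align*}
For each $a \in \mathscr{A}\setminus\mathscr{B}$, the value $l(\pi(a)) = \sum_i l_i\,\theta_i(\pi(a))$ is a continuous function of the coefficients $(l_1,\dots,l_k) \in \mathbb{R}^k$, and by hypothesis (\ref{positive}) this value is strictly positive for every $a$ in the finite set $\mathscr{A}\setminus\mathscr{B}$.

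Next, since $\mathscr{A}\setminus\mathscr{B}$ is finite, the set
\begin{align*}
U := \bigl\{(x_1,\dots,x_k) \in \mathbb{R}^k \ :\ \sum_{i=1}^{k} x_i\,\theta_i(\pi(a)) > 0 \text{ for every } a \in \mathscr{A}\setminus\mathscr{B}\bigr\}
\end{align*}
is a finite intersection of open half-spaces, hence open; and it contains $(l_1,\dots,l_k)$. By density of $\mathbb{Q}^k$ in $\mathbb{R}^k$, I can pick rationals $g_1,\dots,g_k \in \mathbb{Q}$ with $(g_1,\dots,g_k)\in U$. Setting $g := \sum_{i=1}^{k} g_i\,\theta_i$ gives an element of $W^{\vee}_{\mathbb{Q}} = \mathbb{Q}\theta_1 \oplus \cdots \oplus \mathbb{Q}\theta_k$ satisfying $g(\pi(a)) > 0$ for all $a \in \mathscr{A}\setminus\mathscr{B}$, which is exactly the claim.

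There is no real obstacle here: the argument is a one-line density/openness observation, and the only thing to be careful about is that the inequalities are required on a finite set (so the intersection of open conditions stays open), which is automatic since $\mathscr{A}$ is the $T$-support of a finite-dimensional representation.
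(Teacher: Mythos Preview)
Your argument is correct and is precisely the density-of-$\mathbb{Q}$ observation the paper invokes: the paper simply writes ``By density of $\mathbb{Q}$ we have the following'' and states the claim, leaving the openness-plus-density step you have spelled out as implicit. There is nothing to add.
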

Next we look at the composition
\begin{align}
M_{\mathbb{R}}\xrightarrow{\pi}W\xrightarrow{g}\mathbb{R} \ .
\end{align}

\begin{claim}
\emph{ $g\circ \pi$ is a \emph{rational} linear functional on $M_{\mathbb{R}}$. Precisely
\begin{align}
g\circ \pi(L_i)\in \mathbb{Q} \quad \mbox{for all $1\leq i\leq N$ .}
\end{align}
}
\end{claim}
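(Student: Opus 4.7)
The plan is to observe that $\pi$ is in fact defined over $\mathbb{Q}$, so it carries the rational structure on $M_{\mathbb{R}}$ to a natural rational structure on $W$, and then to read off the rationality of $g\circ \pi(L_i)$ from the fact that $g$ was chosen rational with respect to a dual basis that actually arises from rational vectors.

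First I would note that $\ker(\pi)=\mathbb{R}\mathscr{B}$ is spanned by the integral set $\mathscr{B}\subset M_{\mathbb{Z}}$, so if we set $W_{\mathbb{Q}}:=M_{\mathbb{Q}}/\mathbb{Q}\mathscr{B}$, then $W_{\mathbb{Q}}\otimes_{\mathbb{Q}}\mathbb{R}\cong W$ naturally and $\pi$ restricts to a surjective $\mathbb{Q}$-linear map $\pi_{\mathbb{Q}}\colon M_{\mathbb{Q}}\twoheadrightarrow W_{\mathbb{Q}}$. In particular, since $L_i\in M_{\mathbb{Z}}\subset M_{\mathbb{Q}}$, we have $\pi(L_i)\in W_{\mathbb{Q}}$ for every $i$. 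Next, a dimension count gives $\dim_{\mathbb{Q}}W_{\mathbb{Q}}=N-\dim_{\mathbb{Q}}\mathbb{Q}\mathscr{B}=N-\dim_{\mathbb{R}}\mathbb{R}\mathscr{B}=k$, so the $\mathbb{R}$-basis $\pi(L_1),\ldots,\pi(L_k)$ of $W$, all of whose members already lie in $W_{\mathbb{Q}}$, must itself be a $\mathbb{Q}$-basis of $W_{\mathbb{Q}}$. Consequently the functionals $\theta_1,\ldots,\theta_k$, which are the $\mathbb{R}$-dual basis by definition, must coincide with the $\mathbb{Q}$-dual basis of $W_{\mathbb{Q}}$; hence each $\theta_j$ takes rational values on every element of $W_{\mathbb{Q}}$.

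Putting these together: by hypothesis $g=\sum_{j=1}^{k}q_j\theta_j$ with $q_j\in\mathbb{Q}$, and each $\pi(L_i)\in W_{\mathbb{Q}}$, so $g\circ\pi(L_i)=\sum_j q_j\,\theta_j(\pi(L_i))\in\mathbb{Q}$ for every $1\leq i\leq N$, which is exactly the assertion. There is no real obstacle; the one bookkeeping point worth checking is the equality $\dim_{\mathbb{Q}}\mathbb{Q}\mathscr{B}=\dim_{\mathbb{R}}\mathbb{R}\mathscr{B}$, which holds because both spaces are spanned by the same finite set of integral vectors $\mathscr{B}\subset M_{\mathbb{Z}}$ and the maximal $\mathbb{Q}$-linearly independent subsets of $\mathscr{B}$ are exactly the maximal $\mathbb{R}$-linearly independent ones.
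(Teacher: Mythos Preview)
Your argument is correct. Both your proof and the paper's rest on the same underlying fact, namely that $\mathscr{B}\subset M_{\mathbb{Z}}$ so the kernel of $\pi$ is defined over $\mathbb{Q}$; the packaging, however, is different. The paper proceeds concretely: it chooses elements $b_1,\dots,b_{N-k}\in\mathscr{B}$ so that $\{b_1,\dots,b_{N-k};L_1,\dots,L_k\}$ is a basis of $M_{\mathbb{R}}$, expands each $L_j$ ($j>k$) in this basis with rational coefficients via Cramer's rule, and then applies $g\circ\pi$, killing the $b$-terms. You instead pass to the rational quotient $W_{\mathbb{Q}}=M_{\mathbb{Q}}/\mathbb{Q}\mathscr{B}$, observe that $\pi(L_1),\dots,\pi(L_k)$ already form a $\mathbb{Q}$-basis there, and conclude that the dual functionals $\theta_j$ are rational on $W_{\mathbb{Q}}$. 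Your route is a bit cleaner and avoids the explicit change-of-basis computation; the paper's route is more elementary in that it never invokes the identification $W_{\mathbb{Q}}\otimes_{\mathbb{Q}}\mathbb{R}\cong W$ and works entirely inside $M_{\mathbb{R}}$.
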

We already know that $g\circ \pi(L_i)\in \mathbb{Q}$ when $1\leq i\leq k$. To get the remaining basis elements we observe that
the exact sequence
\begin{align}
0\ra  \mathbb{R}\mathscr{B}\ra M_{\mathbb{R}}\xrightarrow{\pi}W\ra 0
\end{align}
shows that we may find a basis of $M_{\mathbb{R}}$ of the shape
\begin{align}
\{b_1,\dots,b_{N-k}; L_1,\dots,L_k\}\quad \mbox{where $b_i\in\mathscr{B}\subset M_{\mathbb{Z}}$ .}
\end{align}
Let $j\in\{k+1,\dots, N\}$. By Cramer's rule there are rational numbers $q_{ij}$ such that
\begin{align}\label{cramer}
L_j=\sum_{i=1}^{k}q_{ij}L_i+\sum_{i=k+1}^{N}q_{ij}b_{i-k} \ .
\end{align}
Applying $g\circ \pi$ to both sides of (\ref{cramer}) proves the claim.

Therefore, there is a positive integer $m$ satisfying
\begin{align}
m (g\circ \pi)\in N_{\mathbb{Z}} \ .
\end{align}
Finally we define $u:=m (g\circ \pi)$ and we observe that the corresponding one parameter subgroup $\lambda^u$ satisfies
\begin{align}
\lambda^u(0)f:=\lim_{\alpha\ra 0}\lambda^u(\alpha)\cdot f= \sum_{b\in\mathscr{B}}f(b)\mathbf{e}_b  \ .
\end{align}
Recall from (\ref{limit}) that 
\begin{align}
b(\tau_i)f(b)\ra f_{\infty}(b) \ .
\end{align}
Therefore
\begin{align}
\overline{T\cdot f_{\infty}}\subset \overline{T\cdot \lambda^u(0)f} \ .
\end{align}
Since 
\begin{align}
 \mbox{supp}(f_{\infty}) =\mbox{supp}(\lambda^u(0)f)=\mathscr{B}
 \end{align} 
  these two toric\footnote{Normality is not essential here.} varieties \emph{have the same dimension\footnote{The (common) dimension is the rank of the lattice $\mathbb{Z}\mathscr{B}$ .}}. Since these varieties are irreducible \emph{they coincide}
\begin{align}
\overline{T\cdot f_{\infty}}= \overline{T\cdot \lambda^u(0)f} \ .
\end{align}
Since any two (nonempty) Zariski open subsets of an irreducible variety must meet we see that
\begin{align}
T\cdot f_{\infty}\cap T\cdot \lambda^u(0)f \neq \emptyset \ .
\end{align}
Since these sets are orbits we must have
\begin{align}
T\cdot f_{\infty}= T\cdot \lambda^u(0)f  \ .
\end{align}
This completes the proof of Proposition \ref{richardsontori} .
\end{proof}
 
 Now we come to the core result of the theory of semistable pairs. It is very closely related to, and functions in exactly the same way as, the ``one variable'' numerical criterion.
\begin{theorem}\label{numericalcriterion}
\emph{Let $\mathbb{V}$ and $\mathbb{W}$ be finite dimensional complex rational representations of $G$. Let $v$ and $w$ be (nonzero) elements of $\mathbb{V}$ and $\mathbb{W}$ respectively. Then the pair $(v,w)$ is semistable if and only if it is $T$ semistable for all maximal algebraic tori $T\leq G$.}
\end{theorem}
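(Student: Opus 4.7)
The ``if'' direction is trivial: for every maximal torus $T\leq G$ one has $\overline{T\cdot [(v,w)]}\subseteq \overline{\mathcal{O}}_{vw}$ and $\overline{T\cdot [(v,0)]}\subseteq \overline{\mathcal{O}}_v$, so disjointness of the $G$-closures immediately forces disjointness of the $T$-closures. All the content lies in the converse.

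Suppose $(v,w)$ is not $G$-semistable. Then $\overline{\mathcal{O}}_{vw}\cap \overline{\mathcal{O}}_v$ is a nonempty, closed, $G$-invariant subset of $\mathbb{P}(\mathbb{V}\oplus \mathbb{W})$, and as such contains at least one closed $G$-orbit $\mathcal{C}$. Because $\mathcal{C}\subseteq \overline{\mathcal{O}}_v\subseteq \mathbb{P}(\mathbb{V}\oplus \{0\})$, every point of $\mathcal{C}$ has vanishing $\mathbb{W}$-component. My plan is to produce a single one-parameter subgroup $\lambda:\mathbb{C}^*\ra G$ whose image lies in some maximal torus $T$ and which witnesses the failure of $T$-semistability of $(v,w)$.

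The key step is a Hilbert--Mumford--Kempf style claim: there exists a 1-PSG $\lambda$ of $G$ with $\lim_{\alpha\to 0}[\lambda(\alpha)(v,w)]\in \mathcal{C}$. Granting this, decompose $v=\sum_i v_i$ and $w=\sum_j w_j$ into $\lambda$-weight spaces, let $m(v,\lambda),m(w,\lambda)$ denote the minimal weights appearing, and set $m:=\min(m(v,\lambda),m(w,\lambda))$; a direct calculation gives $\lim_{\alpha\to 0}[\lambda(\alpha)(v,w)]=[(v_m,w_m)]$. Since this limit has zero $\mathbb{W}$-component, $w_m=0$, which forces $m=m(v,\lambda)<m(w,\lambda)$. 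Applying the same computation to $(v,0)$ then gives $\lim_{\alpha\to 0}[\lambda(\alpha)(v,0)]=[(v_{m(v,\lambda)},0)]$, exactly coinciding with the first limit. Consequently, for any maximal torus $T$ containing $\lambda(\mathbb{C}^*)$, the intersection $\overline{T\cdot [(v,w)]}\cap \overline{T\cdot [(v,0)]}$ is nonempty, violating $T$-semistability.

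The hard part is producing $\lambda$---this is a reductive-group extension of Proposition \ref{richardsontori}, the case where the target closed orbit belongs to the ambient $G$-action rather than to a torus. The route I would take is to lift the situation to the affine cone $\mathbb{V}\oplus\mathbb{W}$ acted on by the reductive group $\mathbb{C}^*\times G$; the affine orbit closures of $(v,w)$ and $(v,0)$ there still meet and share a closed orbit lying entirely in $\mathbb{V}\oplus\{0\}$. Choosing a representative in this closed orbit and applying Proposition \ref{richardsontori} inside a maximal torus of $\mathbb{C}^*\times G$---after conjugating by an element of $G$ so that the common closed orbit lies in the closure of a torus orbit of $(v,w)$---yields a toric 1-PSG; decomposing it into its $\mathbb{C}^*$ and $G$ factors and conjugating back produces the required 1-PSG $\lambda$ of $G$, completing the proof.
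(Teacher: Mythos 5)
Your overall reduction is the right one, and your second paragraph is correct: the weight computation showing that if $\lim_{\alpha\ra 0}[\lambda(\alpha)(v,w)]$ lies in $\mathbb{P}(\mathbb{V}\oplus\{0\})$ then $m(v,\lambda)<m(w,\lambda)$, so that this limit coincides with $\lim_{\alpha\ra 0}[\lambda(\alpha)(v,0)]$ and kills $T$-semistability for any maximal torus containing $\lambda(\mathbb{C}^*)$, is exactly how the paper passes from its key proposition to the theorem. The gap is in the key step, which you correctly flag as the hard part but do not actually establish. Two concrete objections. First, the affine-cone reduction founders at the cone point: the $(\mathbb{C}^*\times G)$-orbit closures of $(v,w)$ and of $(v,0)$ in $\mathbb{V}\oplus\mathbb{W}$ both contain the origin, so ``they share a closed orbit lying entirely in $\mathbb{V}\oplus\{0\}$'' is satisfied trivially by $\{0\}$. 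A one-parameter subgroup $(\mu,\lambda)$ of $\mathbb{C}^*\times G$ driving $(v,w)$ to $0$ only tells you $\mu+m(v,\lambda)>0$ and $\mu+m(w,\lambda)>0$; it gives no comparison between $m(v,\lambda)$ and $m(w,\lambda)$, and the projective limit $[(v_m,w_m)]$ then need not lie in $\mathbb{P}(\mathbb{V}\oplus\{0\})$ at all. Second, the clause ``after conjugating by an element of $G$ so that the common closed orbit lies in the closure of a torus orbit of $(v,w)$'' assumes precisely what must be proved: Proposition \ref{richardsontori} only reaches boundary points of a \emph{torus} orbit, and the entire difficulty of the reductive-to-torus reduction is to show that a point in the boundary of the $G$-orbit closure can be moved into the closure of a maximal-torus orbit. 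As written this step is circular, or at best a silent appeal to the full Hilbert--Mumford--Kempf/Birkes--Richardson theorem without proof.

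The paper sidesteps both difficulties by proving a weaker statement that still suffices, namely Proposition \ref{subspace}: if $\overline{\mathcal{O}}_u$ meets a proper $G$-invariant \emph{linear} subspace $L\subset\mathbb{P}(\mathbb{E})$, then $\overline{T\cdot[u]}$ meets $L$ for some maximal torus $T$. This is applied with $\mathbb{E}=\mathbb{V}\oplus\mathbb{W}$, $u=(v,w)$ and $L=\mathbb{P}(\mathbb{V}\oplus\{0\})\supseteq\overline{\mathcal{O}}_v$; no closed orbit and no affine cone are needed, since everything happens in projective space. Its proof is Richardson's compactness argument: cover $L$ by $T$-invariant affine opens, choose $T$-invariant regular functions vanishing on $L$ and equal to $1$ on the relevant $T$-orbit closures, patch them by a partition of unity into a positive continuous function $F$ on the maximal compact $K$, and use the Cartan decomposition $G=KTK$ together with $T$-invariance of the local functions to force $F$ to tend to $0$ along a sequence, contradicting its positive lower bound. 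To repair your argument you should either give a genuine proof of the projective Kempf-type statement you invoke (limit along a one-parameter subgroup landing in a closed $G$-stable subset), or replace the closed orbit $\mathcal{C}$ by the linear subspace $\mathbb{P}(\mathbb{V}\oplus\{0\})$ and supply the paper's Proposition \ref{subspace}, after which your concluding weight computation finishes the job.
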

Theorem \ref{numericalcriterion} follows at once from the next proposition.
\begin{proposition}\label{subspace}\emph{Let $G$ be a classical group. Let $\mathbb{E}$ be a finite dimensional rational representation of $G$. Let $L\subset \mathbb{P}(\mathbb{E})$ be a proper $G$ invariant linear subspace. Let $u\in \mathbb{E}\setminus\{0\}$. Suppose that
\begin{align}
\overline{\mathcal{O}}_u\cap L\neq \emptyset\ .
\end{align}
Then there is a maximal algebraic torus $T\leq G$ such that
\begin{align}
\overline{T\cdot [u]}\cap L\neq \emptyset \ .
\end{align}
}
\end{proposition}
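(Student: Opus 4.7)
The plan is to reduce Proposition \ref{subspace} to the torus case (Proposition \ref{richardsontori}) via the Cartan decomposition of the classical group $G$. First, set up the linear algebra: since $L \subset \mathbb{P}(\mathbb{E})$ is a proper $G$-invariant projective linear subspace, write $L = \mathbb{P}(L_0)$ for a proper $G$-submodule $L_0 \subset \mathbb{E}$, and by reductivity pick a $G$-invariant complement $M_0$, so that $\mathbb{E} = L_0 \oplus M_0$. Fix a maximal compact subgroup $K \leq G$ and a $K$-invariant Hermitian inner product on $\mathbb{E}$ (so the weight spaces of any maximal torus are mutually orthogonal). Fix a maximal torus $T \leq G$ with real split form $A \subset T$; the Cartan decomposition gives $G = K \cdot A \cdot K$.

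Choose $[y] \in \overline{\mathcal{O}}_u \cap L$ and a sequence $\sigma_i \in G$ with $\sigma_i \cdot [u] \to [y]$. Decompose $\sigma_i = k_i\, a_i\, k_i'$ via Cartan; by compactness of $K$ pass to a subsequence so that $k_i \to k$ and $k_i' \to k'$. Since $L$ is $K$-invariant, $k^{-1}[y] \in L$, and applying $k_i^{-1}$ to $\sigma_i \cdot [u] \to [y]$ yields
\begin{align*}
a_i \cdot (k_i' \cdot [u])\ \longrightarrow\ k^{-1}\cdot [y]\ \in\ L.
\end{align*}
Set $u' := k' \cdot u$, so $k_i' \cdot [u] \to [u']$. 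It is enough to show $\overline{T \cdot [u']} \cap L \neq \emptyset$: then the conjugate maximal torus $T'' := (k')^{-1} T k'$ satisfies $\overline{T'' \cdot [u]} \cap L \neq \emptyset$ by the $G$-invariance of $L$.

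If $\{a_i\} \subset A$ is bounded, pass to a further subsequence so $a_i \to a \in T$; then $a \cdot [u'] = k^{-1}[y] \in L$ and we are done. Otherwise $H_i := \log a_i$ is unbounded in $\mathfrak{a}$, and we extract a unit vector $\hat H \in \mathfrak{a}$ with $H_i/\|H_i\| \to \hat H$. The crux is then the following weight-dominance claim: among the weights $\phi$ in $\mathrm{supp}(u')$, those attaining $\max\{\psi(\hat H) : \psi \in \mathrm{supp}(u')\}$ correspond to weight-space components of $u'$ lying entirely in $L_0$. Granted this, approximate $\hat H$ by a rational $u \in N_{\mathbb{Z}}$ (density argument as in Proposition \ref{richardsontori}); the associated one-parameter subgroup $\lambda^u$ satisfies
\begin{align*}
\lim_{\alpha \to 0} \lambda^u(\alpha) \cdot [u']\ =\ \Big[\sum_{\substack{\phi \in \mathrm{supp}(u')\\ \phi(\hat H)\ \text{maximal}}} u'_\phi\Big]\ \in\ L,
\end{align*}
so $\overline{T \cdot [u']} \cap L \neq \emptyset$.

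The hard part is establishing the weight-dominance claim in the unbounded case. We have $c_i\, a_i (k_i' u) \to k^{-1}y \in L_0$ for suitable scalars $c_i > 0$, while $k_i' u \to u'$ converges weight-by-weight. The difficulty is that $k_i' u$ carries spurious weight components outside $\mathrm{supp}(u')$ which vanish in the limit but are amplified by the exponentials $e^{\phi(H_i)}$. The technical step is to pass to a further subsequence so that for every weight $\phi$ the quantity $c_i e^{\phi(H_i)} \|(k_i' u)_\phi\|$ has a limit in $[0, \infty]$, and then argue by orthogonality of weight spaces that any surviving contribution with a nonzero $M_0$-component would force a nonzero $M_0$-component in $k^{-1}y$, contradicting $k^{-1}[y] \in L$. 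Combined with the fact that the surviving weights must be those with $\phi(\hat H)$ maximal on $\mathrm{supp}(u')$, this furnishes the claim and completes the reduction to Proposition \ref{richardsontori}.
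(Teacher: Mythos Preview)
Your direct weight analysis along the Cartan sequence has a genuine gap at the ``weight-dominance claim.'' You correctly note that any weight $\psi$ surviving in $k^{-1}y$ (i.e.\ with $c_i e^{\psi(H_i)}\|(k_i'u)_\psi\|\not\to 0$) must have its limiting component in $L_0$. But your assertion that the surviving weights are precisely those $\phi\in\mathrm{supp}(u')$ maximizing $\phi(\hat H)$ is unjustified and can fail: for a spurious weight $\psi\notin\mathrm{supp}(u')$ the decay $\|(k_i'u)_\psi\|\to 0$ may be slow enough that $\psi$ alone survives while every weight of $\mathrm{supp}(u')$ is annihilated. In that event your analysis says nothing about which components of $u'$ lie in $L_0$, and the one-parameter subgroup approximating $\hat H$ need not drive $[u']$ into $L$. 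Concretely, nothing in your setup excludes $u'=u'_{\phi_1}+u'_{\phi_2}$ with $u'_{\phi_1}\in M_0$ and $\phi_1(\hat H)$ maximal on $\mathrm{supp}(u')$: then $c_ie^{\phi_1(H_i)}\to 0$ is forced (since the $M_0$-projection of the limit vanishes while $(k_i'u)_{\phi_1}^{M_0}\to u'_{\phi_1}\neq 0$), every $\phi\in\mathrm{supp}(u')$ is killed, and the entire limit $k^{-1}y$ is carried by some spurious $\psi$ with $\psi(\hat H)>\phi_1(\hat H)$. Your $\lambda^u$ then sends $[u']$ to $[u'_{\phi_1}]\in\mathbb{P}(M_0)$, not into $L$.

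The paper avoids this asymptotic analysis entirely by a different argument, adapted from Richardson. Assuming for contradiction that $\overline{H\cdot[u]}\cap L=\emptyset$ for \emph{every} maximal torus $H$, it fixes one $T$, covers $L$ by finitely many $T$-invariant affine opens $U_j$, and for each $\kappa\in K$ produces a $T$-invariant regular function $f_{ij}$ on the relevant $U_j$ separating $\overline{T\kappa\cdot[u]}$ from $L\cap U_j$. A partition of unity over the compact group $K$ assembles these into a continuous $F>0$ on $K$. The Cartan decomposition $G=KTK$ then furnishes $t_l\kappa_l\cdot[u]\to L$, and $T$-invariance of the $f_{ij}$ gives $F(\kappa_l)=\sum\varphi_{ij}(\kappa_l)|f_{ij}(t_l\kappa_l\cdot[u])|\to 0$, contradicting the positive lower bound. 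The point is that the unbounded torus factor is absorbed \emph{algebraically} by $T$-invariance of the separating functions, so no limiting direction $\hat H$ or weight comparison is needed.
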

We record an important corollary of Propositions \ref{subspace} and \ref{richardsontori}.
\begin{corollary}\label{lambda}\emph{Under the preceding hypotheses, there exists a one parameter subgroup $\lambda:\mathbb{C}^*\ra G$ satisfying}
\begin{align}
\lim_{\alpha\ra 0}\lambda(\alpha)\cdot [u]\in L \ .
\end{align}
\end{corollary}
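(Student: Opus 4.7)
The plan is to combine the two cited propositions in the expected way: Proposition \ref{subspace} reduces the question from an arbitrary $G$-orbit to a torus orbit, and Proposition \ref{richardsontori} then produces the one-parameter subgroup inside that torus. First I would apply Proposition \ref{subspace} to the hypothesis $\overline{\mathcal{O}}_u\cap L\neq\emptyset$ to obtain a maximal torus $T\leq G$ with $\overline{T\cdot[u]}\cap L\neq\emptyset$, and select some $[y]$ in the intersection. The trivial subcase $[y]\in T\cdot[u]$ gives $[u]\in L$ (by $T$-invariance of $L$), so the constant one-parameter subgroup $\lambda(\alpha)\equiv e$ works. In the interesting case, $[y]$ is a proper boundary point of the projective torus orbit $T\cdot[u]$.

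To apply Proposition \ref{richardsontori} I pass to the affine cone, lifting $[u]$ and $[y]$ to nonzero vectors $\hat{u},\hat{y}\in\mathbb{E}$. Setting $T':=\mathbb{C}^*\cdot T$ (the torus obtained by adjoining the scalar action on $\mathbb{E}$), the affine cone over $\overline{T\cdot[u]}$ coincides with $\overline{T'\cdot\hat{u}}$, so $\hat{y}$ lies in the proper affine boundary $\overline{T'\cdot\hat{u}}\setminus T'\cdot\hat{u}$. Applying Proposition \ref{richardsontori} to the $T'$-action on the finite-dimensional $T'$-subrepresentation spanned by $T'\cdot\hat{u}$ produces $\tau'\in T'$ and a one-parameter subgroup $\lambda':\mathbb{C}^*\ra T'$ with $\lim_{\alpha\ra 0}\lambda'(\alpha)\cdot\hat{u}=\tau'\cdot\hat{y}$. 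Projectivizing gives $\lim_{\alpha\ra 0}\lambda'(\alpha)\cdot[u]=\tau'\cdot[y]$ in $\mathbb{P}(\mathbb{E})$. Because scalars act trivially on $\mathbb{P}(\mathbb{E})$, $\lambda'$ descends to a one-parameter subgroup $\lambda:\mathbb{C}^*\ra T\leq G$ with the same projective limit. Since $L$ is $G$-invariant and $[y]\in L$, we conclude $\lim_{\alpha\ra 0}\lambda(\alpha)\cdot[u]=\tau'\cdot[y]\in L$, as desired.

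The one delicate point---really the main obstacle---is the bookkeeping between projective and affine orbit closures: the projective closure of $T\cdot[u]$ pulls back to the affine closure of $(\mathbb{C}^*\cdot T)\cdot\hat{u}$ rather than to that of $T\cdot\hat{u}$ itself, and for the groups considered here (e.g.\ $G=\slnc$) the scalars $\mathbb{C}^*$ do not lie inside $G$. The enlargement $T':=\mathbb{C}^*\cdot T$ together with the observation that scalars act trivially on $\mathbb{P}(\mathbb{E})$ is what allows the one-parameter subgroup produced by Proposition \ref{richardsontori} in $T'$ to be pushed down to a one-parameter subgroup of $G$. This is precisely the same translation that implicitly underlies Proposition \ref{orbitcone} and its derivation from Proposition \ref{richardsontori}, so no new ideas beyond those already in the proof of Richardson's lemma are required.
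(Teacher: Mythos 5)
Your argument is correct and takes essentially the same route the paper intends: Proposition \ref{subspace} supplies the maximal torus meeting $L$, and Proposition \ref{richardsontori} (equivalently its projective form, Proposition \ref{orbitcone}) supplies the one-parameter subgroup --- the paper records the corollary without a written proof precisely because it is this combination. Your extra care with the affine cone and the enlarged torus $\mathbb{C}^*\cdot T$ is the same bookkeeping that is implicit in the paper's passage from Proposition \ref{richardsontori} to Proposition \ref{orbitcone}.
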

The argument is an adaption of one originally due to R. Richardson (see \cite{birkes71} pgs. 464-465). 
\begin{proof} The proof is by contradiction. Therefore we assume that  
\begin{align}
L\cap \big(\overline{\mathcal{O}}_u\setminus {\mathcal{O}}_u\big) \neq \emptyset
\end{align}
and for \emph{every} algebraic torus $H$ in $G$ we have 
\begin{align}
\overline{H\cdot[u]}\cap L=\emptyset \ .
\end{align}

Fix any maximal algebraic torus $T$ of $G$. There is a finite collection 
\begin{align}
\mathscr{C}=\{ U_1 , U_2, \dots , U_m\}
\end{align}
of $T$ invariant affine open sets of  $\mathbb{P}(\mathbb{E})$ satisfying
\begin{enumerate}
 \item $L\cap U_i\neq \emptyset$ for all $i\in\{1,2,\dots,m\} $ .\\
 \ \\
 \item $L\subset \cup_{i=1}^mU_i $ . \\
 \ \\
 \item For all $\kappa\in K$ (a maximal compact of $G$) there is an $i=i(\kappa)$ such that\\
 \begin{align}
 T\kappa\cdot [u]\subset U_{i(\kappa)} \ .
 \end{align}
 \end{enumerate}
 
 The last assumption implies that for every $\kappa\in K$ there is a ball $\kappa\in B_{\delta(\kappa),i(\kappa)}\subset K$ (in any Riemannian metric on $K$ ) such that 
 \begin{align}
 x\cdot[u]\in U_{i(\kappa)} \ \mbox{for all $x\in B_{\delta(\kappa),i(\kappa)}$}\ .
 \end{align}
Furthermore we have that
\begin{align}
 \overline{T\kappa\cdot [u]} \cap L\cap U_{i(\kappa)}=\emptyset \ .
 \end{align}
 Therefore there exists a $T$ invariant
 \begin{align}
 f=f_{\kappa , i(\kappa)}\in \mathbb{C}[U_{i(\kappa)}]^T \ 
 \end{align}
 satisfying
 \begin{enumerate}
 \item $ f|_{ \overline{T\kappa\cdot [w]}\cap U_{i(\kappa)}}\equiv 1$ . \\
 \ \\
\item $f|_{L\cap U_{i(\kappa)}}\equiv 0 $ .
\end{enumerate}
  
 Therefore by choosing $r(\kappa)<\delta(\kappa)$ we may assume that
 \begin{align}
 |f_{\kappa , i(\kappa)}(x\cdot[u])|>0 \ \mbox{for all $x\in B_{r(\kappa),i(\kappa)}$} \ .
 \end{align}
 Now we extract a finite covering  $\{B_{ij}\}$ of $K$ by balls $B_{ij}$ satisfying
 \begin{enumerate}
 \item $x\cdot [u]\in U_{j}$  for all $x\in B_{ij}$ . \\
 \ \\
 \item $|f_{ij}(x\cdot [u])|>0$ for all $x\in B_{ij}$ .
 \end{enumerate}
 Let $\{\varphi_{ij}\}$ be a partition of unity subordinate to this cover. We define a continuous function $F$ on $K$ by
 \begin{align}
 F(x):= \sum_{i,j}\varphi_{ij}(x)|f_{ij}(x\cdot [u])| \ .
 \end{align}

 It is clear from the construction that $F$ is positive. Therefore by compactness $F$ has a positive lower bound on $K$.
 Our assumption at the outset was 
 \begin{align}
 \overline{\mathcal{O}}_u\cap L\neq \emptyset \ .
 \end{align}
 The Cartan decomposition
 \begin{align}
 G=KTK
 \end{align}
 and the fact that $L$ is closed and $G$-invariant imply that there is a sequence
 \begin{align}
\lim_{l\ra \infty} t_l\kappa_l\cdot[u]\in L\quad t_l\in T \ , \ \kappa_l\in K .
\end{align}
Then by $T$-invariance of the $f_{ij}$ we have
\begin{align}
\begin{split}
 F(\kappa_l)&=\sum_{i,j}\varphi_{ij}(\kappa_l)|f_{ij}(\kappa_l\cdot [u])| \\
\ \\
&=\sum_{i,j}\varphi_{ij}(\kappa_l)|f_{ij}(t_l\kappa_l\cdot [u])| \ .
\end{split}
\end{align}
Since the $f_{ij}$ vanish on $L\cap U_j$  we see that $F(\kappa_l)\ra 0$ as $l\ra \infty$. This contradicts the fact that $F$ has a positive lower bound on $K$ and we are done.
\end{proof}
We summarize the main results of this section as follows. For the ``weights'' $w_{\lambda}(w)$ with respect to a one parameter subgroup $\lambda$ of $G$ see Definition \ref{weight}.
\begin{theorem}\label{summary} 
 The following statements are equivalent.  
\emph{
\begin{enumerate}
\item   $(v,w)$ is {semistable}. \ \\
 \item $(v,w)$ is $T$-semistable for all $T\leq G$. \ \\
\item   $\mathcal{N}(v)\subset\mathcal{N}(w)$ for all maximal tori $T\leq G$ .\ \\
 \item $w_{\lambda}(w)\leq w_{\lambda}(v)$ for all one parameter subgroups $\lambda$.\ \\
 \item For every maximal algebraic torus $T\leq G$ and $\chi\in \mathscr{A}_T(v)$ there exists \newline an integer $d >0$ and a {\emph{\textbf{relative invariant}}} $f \in \mathbb{C}_d[\ \mathbb{V}\oplus \mathbb{W}\ ]^{T}_{d\chi}$ such that
\begin{align}
\begin{split}
&f(v\ , \ w)\neq 0 \ \mbox{and} \ f|_{\mathbb{V}}\equiv 0 \ .
\end{split}
\end{align}
\end{enumerate}}
\end{theorem}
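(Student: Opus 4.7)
My plan is to establish the cycle of equivalences $(1) \Leftrightarrow (2) \Leftrightarrow (3) \Leftrightarrow (4)$ first and then close the loop with $(3) \Leftrightarrow (5)$. The equivalence $(1) \Leftrightarrow (2)$ is exactly Theorem \ref{numericalcriterion} applied to the proper $G$-invariant linear subspace $L = \mathbb{P}(\mathbb{V} \oplus \{0\}) \subset \mathbb{P}(\mathbb{V} \oplus \mathbb{W})$. The direction $(1) \Rightarrow (2)$ is automatic since $\overline{T \cdot [(v,w)]} \subseteq \overline{\mathcal{O}}_{vw}$. For $(2) \Rightarrow (1)$, one first observes that $\overline{\mathcal{O}}_{vw} \cap \mathbb{P}(\mathbb{V} \oplus \{0\}) = \overline{\mathcal{O}}_{vw} \cap \overline{\mathcal{O}}_v$: any projective limit point of $\mathcal{O}_{vw}$ whose $\mathbb{W}$-component vanishes can be rescaled so that its $\mathbb{V}$-component lies in $\overline{G \cdot v}$. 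Proposition \ref{subspace} then converts a failure of $(1)$ into a failure of $T$-semistability for some maximal torus.

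For $(2) \Leftrightarrow (3) \Leftrightarrow (4)$, I would fix a maximal torus $T$ and analyze the projective one-parameter limits explicitly. For $u \in N_{\mathbb{Z}}$,
\[
\lambda^u(\alpha) \cdot (v,w) = \sum_{a \in \mathscr{A}_T(v)} \alpha^{(u,a)} v_a + \sum_{b \in \mathscr{A}_T(w)} \alpha^{(u,b)} w_b \ ,
\]
so after scaling out the minimal power of $\alpha$, the limit $\lambda^u(0)\cdot [(v,w)]$ retains exactly the weight components on which $(u,\cdot)$ attains its minimum. This limit lies in $\mathbb{P}(\mathbb{V})$ precisely when $\min_{b \in \mathscr{A}_T(w)}(u,b) > \min_{a \in \mathscr{A}_T(v)}(u,a)$, i.e.\ when $w_\lambda(w) > w_\lambda(v)$. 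Proposition \ref{richardsontori} upgrades this by asserting that every boundary point of $T \cdot [(v,w)]$ is $T$-conjugate to such a one-parameter limit. Hence $T$-semistability is exactly $w_\lambda(w) \leq w_\lambda(v)$ for all one-parameter subgroups of $T$. By the Hyperplane Separation Theorem the system of inequalities (as $u$ varies) is equivalent to the polytope inclusion $\mathcal{N}(v) \subseteq \mathcal{N}(w)$. Quantifying over all maximal tori yields the three-fold equivalence.

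The heart of the argument is $(3) \Leftrightarrow (5)$. For $(3) \Rightarrow (5)$: given $\chi \in \mathscr{A}_T(v) \subseteq \mathcal{N}(w)$, write $\chi = \sum_i r_i b_i$ as a rational convex combination of weights $b_i \in \mathscr{A}_T(w)$, clear denominators so that $d\chi = \sum_i n_i b_i$ with $n_i \in \mathbb{Z}_{\geq 0}$ and $\sum_i n_i = d$, pick linear functionals $\ell_{b_i}$ on $\mathbb{W}_{b_i}$ with $\ell_{b_i}(w) \neq 0$, and set $f := \prod_i \ell_{b_i}^{n_i}$. Then $f$ is a degree-$d$ polynomial on $\mathbb{V} \oplus \mathbb{W}$ depending only on $\mathbb{W}$-coordinates, so $f|_{\mathbb{V}} \equiv 0$; by construction it has $T$-weight $d\chi$ and satisfies $f(v,w) \neq 0$. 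For $(5) \Rightarrow (3)$, suppose $(3)$ fails. Choose $u \in N_{\mathbb{Z}}$ and a vertex $\chi \in \mathscr{A}_T(v)$ of $\mathcal{N}(v)$ with $(u,\chi) = \min_{a \in \mathscr{A}_T(v)}(u,a) < \min_{b \in \mathscr{A}_T(w)}(u,b)$. A relative invariant $f$ satisfying $(5)$ for this $\chi$ must contain a monomial $\prod z_a^{i_a} \prod z_b^{j_b}$ with $\sum j_b \geq 1$ (because $f|_{\mathbb{V}} \equiv 0$) and $\sum i_a a + \sum j_b b = d\chi$. Pairing with $u$ and using $(u,a) \geq (u,\chi)$ together with $(u,b) > (u,\chi)$ yields the strict inequality $d(u,\chi) > d(u,\chi)$, a contradiction.

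The main technical obstacle I anticipate is precisely this second direction of $(3) \Leftrightarrow (5)$: the hypothesis $f|_{\mathbb{V}} \equiv 0$ is considerably weaker than asserting that $f$ uses only $\mathbb{W}$-coordinates, so one must carefully handle mixed monomials. The argument succeeds only because $\chi$ is selected as a vertex of $\mathcal{N}(v)$ minimizing $(u,\cdot)$; this is what guarantees $(u,a) \geq (u,\chi)$ on the support of $v$ and drives the contradiction. Once this is in hand, the full equivalence $(1) \Leftrightarrow \cdots \Leftrightarrow (5)$ follows by concatenating the above implications.
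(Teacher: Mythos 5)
Your argument is correct and follows essentially the route the paper intends: Theorem \ref{summary} is stated as a summary of Section \ref{equivexts}, and your assembly --- $(1)\Leftrightarrow(2)$ from Proposition \ref{subspace} applied to the invariant subspace $L=\mathbb{P}(\mathbb{V}\oplus\{0\})$ (together with the rescaling observation identifying $\overline{\mathcal{O}}_{vw}\cap L$ with $\overline{\mathcal{O}}_{vw}\cap\overline{\mathcal{O}}_{v}$), and $(2)\Leftrightarrow(3)\Leftrightarrow(4)$ from the explicit one-parameter limit computation combined with Proposition \ref{richardsontori}, Theorem \ref{polytope}, and support-function duality --- is exactly the intended one. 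The only item the paper leaves entirely unproved is $(5)$, and your relative-invariant argument (constructing $f$ as a product of linear functionals on the weight spaces of $w$ realizing $d\chi$ as a lattice convex combination, and for the converse pairing a necessarily mixed monomial of weight $d\chi$ against a separating functional $u$ at a minimizing vertex $\chi\in\mathscr{A}_T(v)$) correctly supplies that missing piece.
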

 We provide a simple application of the numerical criterion.
  \begin{example}
\emph{Let $\mathbb{V}_e$ and  $\mathbb{V}_d$ be irreducible $SL(2,\mathbb{C})$ modules with highest weights $e , d \in\mathbb{N}$. These are well known to be spaces of homogeneous polynomials in two variables. Let $f$ and $g$ be two such polynomials in $\mathbb{V}_e\setminus\{0\}$ and $\mathbb{W}_d\setminus\{0\}$
respectively. Theorem \ref{numericalcriterion} shows that the pair $(f,g)$ is semistable if and only if the following two conditions are satisfied
\begin{align}\label{d-e/2}
\begin{split}
& 1) \ e\leq d \ . \\
\ \\
& 2)\ \mbox{For all $p\in \mathbb{P}^1$}\ \mbox{ord}_p(g)-\mbox{ord}_p(f)\leq \frac{d-e}{2} \ .
\end{split}
\end{align}
This can be proved by factoring the polynomials.
In particular when $e=0$ and $f=1$ we see that $(1,g)$ is semistable if and only if 
\begin{align}
 \mbox{ord}_p(g) \leq \frac{d}{2} \ \mbox{for all $p\in \mathbb{P}^1$}\ .
\end{align}
 Assume that $e=d-1$. Suppose that $(f,g)$ is a semistable pair . Then by (\ref{d-e/2}) we get
\begin{align}\label{d=e}
\mbox{ord}_p(g)\leq \mbox{ord}_p(f) \quad \mbox{for all $p\in \mathbb{P}^1$} \ .
\end{align}
Let $\{p_1,p_2,\dots,p_d\}$ be the zeros of $g$ on $\mathbb{P}^1$ counted with multiplicity. Then by (\ref{d=e}) we have
\begin{align}
d=\sum_{1\leq i\leq d}\mbox{ord}_{p_i}(g)\leq \sum_{1\leq i\leq d}\mbox{ord}_{p_i}(f)\leq d-1 \ .
\end{align}
Therefore there is no  semistable pair in $\mathbb{V}_{d-1}\oplus\mathbb{V}_d$.
When $d=e$ the pair $(f,g)$ is semistable if and only if
\begin{align}
\mathbb{C}f=\mathbb{C}g \ . 
\end{align}}
\end{example}

 \subsection{A Kempf-Ness type functional}\label{kempfness} In this section we study semistability in terms of a Kempf-Ness type functional. In fact it is from this point of view that the author arrived at the definition of semistability. As always $(\mathbb{V}, v)$ and $(\mathbb{W}, w)$ are finite dimensional complex rational representations of $G$ together with a pair of nonzero vectors. We equip $\mathbb{V}$ and $\mathbb{W}$ with Hermitian norms .  We are interested in the function on $G$ which we call the \emph{energy of the pair} $(v,w)$ :
 \begin{align}\label{energy}
 G\ni \sigma\ra p_{vw}(\sigma):=\log||\sigma\cdot w||^2-\log||\sigma\cdot v||^2 \ .
 \end{align}
 Then we have the following fact.
 \begin{proposition}\label{vwlowerbound}
\emph{  $p_{vw}$ is bounded from below on $G$ if and only if $(v,w)$ is semistable.}
 \end{proposition}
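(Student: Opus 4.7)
The plan is to reformulate semistability as a cleaner geometric condition and then exploit compactness of the orbit closure together with continuity of a natural function comparing the $\mathbb{V}$- and $\mathbb{W}$-components. The key preliminary step is a lemma identifying two a priori different subsets:
\begin{align*}
\overline{\mathcal{O}}_{vw}\cap \overline{\mathcal{O}}_v \;=\; \overline{\mathcal{O}}_{vw}\cap \mathbb{P}(\mathbb{V}),
\end{align*}
where $\mathbb{P}(\mathbb{V})\hookrightarrow \mathbb{P}(\mathbb{V}\oplus\mathbb{W})$ via $[u]\mapsto [(u,0)]$. One direction is immediate since $\overline{\mathcal{O}}_v\subseteq \mathbb{P}(\mathbb{V})$. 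For the reverse, given $[u]\in \overline{\mathcal{O}}_{vw}\cap \mathbb{P}(\mathbb{V})$, pick a sequence $\sigma_n\in G$ and positive scalars $\lambda_n$ such that $\lambda_n^{-1}\sigma_n\cdot(v,w)\to (u,0)$ with $u\neq 0$. Then $\lambda_n^{-1}\sigma_n\cdot v\to u$ is eventually nonzero, so $[\sigma_n\cdot v]\to [u]$, placing $[u]\in \overline{\mathcal{O}}_v$. Hence semistability is equivalent to $\overline{\mathcal{O}}_{vw}\cap \mathbb{P}(\mathbb{V})=\emptyset$.

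Next I would introduce the continuous function
\begin{align*}
F:\mathbb{P}(\mathbb{V}\oplus\mathbb{W})\longrightarrow [0,1],\qquad F([(a,b)])=\frac{\|a\|^2}{\|a\|^2+\|b\|^2},
\end{align*}
well-defined since $(a,b)\neq 0$, and satisfying $F^{-1}(1)=\mathbb{P}(\mathbb{V})$. Assume first that $(v,w)$ is semistable. Then $\overline{\mathcal{O}}_{vw}$ is a compact (in the classical topology) subset of $\mathbb{P}(\mathbb{V}\oplus\mathbb{W})$ disjoint from $\mathbb{P}(\mathbb{V})$, so $F$ attains a maximum $M<1$ on $\overline{\mathcal{O}}_{vw}$. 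For every $\sigma\in G$ this gives $\|\sigma\cdot v\|^2\leq M(\|\sigma\cdot v\|^2+\|\sigma\cdot w\|^2)$, which rearranges to $\|\sigma\cdot w\|^2/\|\sigma\cdot v\|^2\geq (1-M)/M$, and taking logarithms yields the uniform lower bound $p_{vw}(\sigma)\geq \log((1-M)/M)$.

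For the converse, suppose $p_{vw}\geq -C$ on all of $G$. Then $\|\sigma\cdot w\|^2\geq e^{-C}\|\sigma\cdot v\|^2$, so $F([\sigma\cdot(v,w)])\leq 1/(1+e^{-C})<1$ on the entire orbit $\mathcal{O}_{vw}$; by continuity the same strict bound survives on $\overline{\mathcal{O}}_{vw}$, which is therefore disjoint from $F^{-1}(1)=\mathbb{P}(\mathbb{V})$. By the equivalence established in the first paragraph, $(v,w)$ is semistable. The only subtle point in the whole argument is the identification $\overline{\mathcal{O}}_{vw}\cap \overline{\mathcal{O}}_v=\overline{\mathcal{O}}_{vw}\cap\mathbb{P}(\mathbb{V})$; once this is in hand the rest is a standard compactness argument in the spirit of Kempf-Ness.
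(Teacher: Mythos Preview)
Your argument is correct. The approach is close in spirit to the paper's but packaged differently. The paper proves the identity $p_{vw}(\sigma)=\log\tan^2 d_g(\sigma\cdot[(v,w)],\sigma\cdot[(v,0)])$ in the Fubini--Study metric, then uses the pointwise inequality $d_g(\sigma\cdot[(v,w)],\sigma\cdot[(v,0)])\leq d_g(\sigma\cdot[(v,w)],\tau\cdot[(v,0)])$ for all $\tau\in G$ to conclude the exact formula $\inf_G p_{vw}=\log\tan^2 d_g(\overline{\mathcal{O}}_{vw},\overline{\mathcal{O}}_v)$, from which the proposition is immediate. Your function $F$ is nothing but $\cos^2$ of this same Fubini--Study distance to $\mathbb{P}(\mathbb{V})$, so the analytic content is identical; what differs is that in place of the paper's distance inequality you prove the set-theoretic lemma $\overline{\mathcal{O}}_{vw}\cap\overline{\mathcal{O}}_v=\overline{\mathcal{O}}_{vw}\cap\mathbb{P}(\mathbb{V})$, which lets you test disjointness against the whole linear subspace rather than the smaller orbit closure. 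Your route is marginally more elementary (no metric geometry needed), while the paper's route yields the sharper statement of the precise infimum, which it later exploits. One small point worth making explicit in your write-up: you pass freely between Zariski and classical closures of the orbits; this is legitimate because orbits of algebraic group actions are locally closed, hence constructible, and over $\mathbb{C}$ the two closures of a constructible set agree.
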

 The proposition is a consequence of the following observation.
 \begin{lemma}\label{distance}
 \begin{align*}
 p_{vw}(\sigma)=\log\tan^2d_g(\sigma\cdot [(v,w)] , \sigma\cdot [(v,0)]) \ ,
 \end{align*}
 \emph{where $d_g$ denotes the distance in the Fubini-Study metric on} $\mathbb{P}(\mathbb{V}\oplus\mathbb{W})$ .
 \end{lemma}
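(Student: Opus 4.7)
The plan is to compute the Fubini--Study distance on the right-hand side directly and read off that it equals $p_{vw}(\sigma)$. The key input is the standard formula for the Fubini--Study distance on $\mathbb{P}(\mathbb{U})$ induced by a Hermitian inner product $\langle\cdot,\cdot\rangle$ on $\mathbb{U}$: for nonzero $x,y\in\mathbb{U}$,
\begin{align*}
\cos d_g([x],[y]) \;=\; \frac{|\langle x,y\rangle|}{\|x\|\,\|y\|}.
\end{align*}
I would take the Hermitian structure on $\mathbb{V}\oplus\mathbb{W}$ to be the orthogonal direct sum of the given Hermitian norms on $\mathbb{V}$ and $\mathbb{W}$, which is the canonical choice making the splitting $G$-stable in the unitary sense and which is implicit in the statement.

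The core computation is then straightforward. Set $v' := \sigma\cdot v$ and $w' := \sigma\cdot w$, so that $\sigma\cdot(v,w) = (v',w')$ and $\sigma\cdot(v,0) = (v',0)$, since $G$ acts diagonally on $\mathbb{V}\oplus\mathbb{W}$. Using the direct-sum inner product,
\begin{align*}
\langle (v',w'),(v',0)\rangle \;=\; \|v'\|^2, \qquad \|(v',w')\|^2 \;=\; \|v'\|^2+\|w'\|^2, \qquad \|(v',0)\| \;=\; \|v'\|.
\end{align*}
Therefore
\begin{align*}
\cos^2 d_g\bigl(\sigma\cdot[(v,w)],\sigma\cdot[(v,0)]\bigr) \;=\; \frac{\|v'\|^2}{\|v'\|^2+\|w'\|^2},
\end{align*}
and hence
\begin{align*}
\tan^2 d_g\bigl(\sigma\cdot[(v,w)],\sigma\cdot[(v,0)]\bigr) \;=\; \frac{\|w'\|^2}{\|v'\|^2} \;=\; \frac{\|\sigma\cdot w\|^2}{\|\sigma\cdot v\|^2}.
\end{align*}
Taking logarithms gives $\log\tan^2 d_g = \log\|\sigma\cdot w\|^2 - \log\|\sigma\cdot v\|^2 = p_{vw}(\sigma)$, which is the claim.

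There is essentially no obstacle; the only thing to watch is the well-definedness of the formula, i.e.\ that $\sigma\cdot v\neq 0$ and $\sigma\cdot(v,w)$ is not proportional to $\sigma\cdot(v,0)$ along the orbit. The first is automatic because $\sigma\in G$ acts invertibly on $\mathbb{V}$ and $v\neq 0$, so $d_g$ stays strictly less than $\pi/2$ and $\tan^2 d_g$ is finite; the second is automatic because $w\neq 0$ forces $\sigma\cdot w\neq 0$ and hence the two lines in $\mathbb{P}(\mathbb{V}\oplus\mathbb{W})$ are distinct, so $d_g>0$ and $\log\tan^2 d_g$ is well defined. With Lemma~\ref{distance} in hand, Proposition~\ref{vwlowerbound} follows immediately: $p_{vw}$ is bounded below on $G$ iff the $G$-orbits of $[(v,w)]$ and $[(v,0)]$ stay a uniformly positive Fubini--Study distance apart, which by compactness of $\mathbb{P}(\mathbb{V}\oplus\mathbb{W})$ is equivalent to the disjointness of their Zariski closures, i.e.\ to semistability of the pair in the sense of Definition~\ref{semistable}.
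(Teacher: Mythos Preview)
Your proof is correct and is essentially identical to the paper's: both use the orthogonal direct-sum Hermitian form, the standard formula $\cos d_g([x],[y])=|\langle x,y\rangle|/(\|x\|\,\|y\|)$, compute $\cos^2 d_g=\|\sigma\cdot v\|^2/(\|\sigma\cdot v\|^2+\|\sigma\cdot w\|^2)$, convert to $\tan^2$, and take logarithms. Your added remarks on well-definedness and the deduction of Proposition~\ref{vwlowerbound} are fine extras but not part of the paper's proof of the lemma itself.
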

\begin{proof}
 Let $u,v\in \mathbb{V}$ and let $(\cdot,\cdot)$ be any Hermitian inner product on $\mathbb{V}$ with associated Fubini-Study metric $g$ on $\mathbb{P}(\mathbb{V})$. Recall the distance formula
\begin{align*}
\cos d_g([u],[v])=\frac{|(u,v)|}{||u||||v||}\ .
\end{align*}
With the orthogonal direct sum Hermitian form on $\mathbb{V}\oplus\mathbb{W}$ we have for any $\sigma\in G$
\begin{align*}
\cos d_g(\sigma\cdot [(v,w)] , \sigma\cdot [(v,0)])&= \frac{|(\sigma\cdot [(v,w)] , \sigma\cdot [(v,0)])|}{\sqrt{||\sigma\cdot v||^2+||\sigma\cdot w||^2}||\sigma\cdot v||}\\
 \ \\
 &=\frac{||\sigma\cdot v||}{\sqrt{||\sigma\cdot v||^2+||\sigma\cdot w||^2}} \ .
\end{align*}
Since
\begin{align*}
\sec^2d_g(\sigma\cdot [(v,w)] , \sigma\cdot [(v,0)])&=1+\tan^2d_g(\sigma\cdot [(v,w)] , \sigma\cdot [(v,0)]) \ ,
\end{align*}
we may conclude that
\begin{align*} 
\tan^2d_g(\sigma\cdot [(v,w)] , \sigma\cdot [(v,0)])  = \frac{||\sigma\cdot w||^2}{||\sigma\cdot v||^2}\ .
\end{align*}
Now take the log of both sides.
\end{proof}
Observe that for any $\sigma,\tau \in G$ we have the inequality
\begin{align}\label{sigma-tau}
d_g(\sigma\cdot [(v,w)] , \sigma\cdot [(v,0)])\leq d_g(\sigma\cdot [(v,w)] , \tau\cdot [(v,0)])\ .
\end{align}
 As a corollary of (\ref{sigma-tau}) and Lemma \ref{distance} we have the much more refined version of Proposition \ref{vwlowerbound}.
\begin{corollary}\label{infi}\emph{ The infimum of the energy of the pair $(v,w)$ is as follows}
 \begin{align}\label{inf}
 \inf_{\sigma\in G}p_{vw}(\sigma)=\log\tan^2d_g(\overline{\mathcal{O}}_{vw},\overline{\mathcal{O}}_{v}) \ .  
 \end{align}  
 \end{corollary}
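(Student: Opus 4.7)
The plan is to combine Lemma \ref{distance} with the inequality (\ref{sigma-tau}) to obtain a pointwise identity for $p_{vw}$, and then take the infimum using compactness of projective space together with the fact that $\mathcal{O}_{vw}$ is dense in $\overline{\mathcal{O}}_{vw}$.

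First, I would upgrade (\ref{sigma-tau}) from an inequality to an equality: since $d_g(\sigma\cdot [(v,w)],\tau\cdot [(v,0)])\geq d_g(\sigma\cdot [(v,w)],\sigma\cdot [(v,0)])$ holds for every $\tau\in G$, we may take the infimum over $\tau$ to conclude
\begin{align*}
d_g(\sigma\cdot [(v,w)],\sigma\cdot [(v,0)]) \;=\; d_g\bigl(\sigma\cdot [(v,w)],\,\mathcal{O}_{v}\bigr) \;=\; d_g\bigl(\sigma\cdot [(v,w)],\,\overline{\mathcal{O}}_{v}\bigr),
\end{align*}
the last equality by continuity of distance to a set under closure. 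Combining with Lemma \ref{distance} this gives the pointwise identity
\begin{align*}
p_{vw}(\sigma)\;=\;\log\tan^2 d_g\bigl(\sigma\cdot [(v,w)],\,\overline{\mathcal{O}}_{v}\bigr)\qquad \text{for all $\sigma\in G$.}
\end{align*}

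Second, I would take the infimum over $\sigma\in G$. Since $\log\tan^2$ is strictly increasing on $[0,\pi/2)$ (and the Fubini–Study distance takes values in $[0,\pi/2]$), the identity above gives
\begin{align*}
\inf_{\sigma\in G}p_{vw}(\sigma)\;=\;\log\tan^2\Bigl(\inf_{\sigma\in G} d_g\bigl(\sigma\cdot [(v,w)],\,\overline{\mathcal{O}}_{v}\bigr)\Bigr)\;=\;\log\tan^2 d_g\bigl(\mathcal{O}_{vw},\,\overline{\mathcal{O}}_{v}\bigr),
\end{align*}
with the convention that both sides are $+\infty$ when the distance equals $\pi/2$.

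Finally, I would replace $\mathcal{O}_{vw}$ by its closure on the right. Since the map $p\mapsto d_g(p,\overline{\mathcal{O}}_{v})$ is $1$-Lipschitz (hence continuous) on $\mathbb{P}(\mathbb{V}\oplus\mathbb{W})$, and since $\mathcal{O}_{vw}$ is dense in $\overline{\mathcal{O}}_{vw}$,
\begin{align*}
d_g\bigl(\mathcal{O}_{vw},\,\overline{\mathcal{O}}_{v}\bigr)\;=\;d_g\bigl(\overline{\mathcal{O}}_{vw},\,\overline{\mathcal{O}}_{v}\bigr),
\end{align*}
and compactness of projective space ensures this infimum is actually attained. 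Substituting yields the claimed formula $(\ref{inf})$. I do not anticipate any real obstacle here: the entire content of the corollary is encoded in Lemma \ref{distance} and (\ref{sigma-tau}), and the remaining steps are soft topological facts about closed subsets of a compact metric space.
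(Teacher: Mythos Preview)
Your proposal is correct and follows exactly the route the paper indicates: the paper offers no detailed proof, merely stating that the corollary follows from Lemma \ref{distance} together with (\ref{sigma-tau}), and your argument is precisely the natural unpacking of that hint. The only minor remark is that your aside about the $+\infty$ convention is unnecessary here, since $\sigma\cdot[(v,0)]\in\overline{\mathcal{O}}_v$ forces $d_g(\sigma\cdot[(v,w)],\overline{\mathcal{O}}_v)<\pi/2$ for every $\sigma$, so the infimum distance is automatically strictly below $\pi/2$.
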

 Our whole approach to the standard conjectures in K\"ahler Geometry is based on this identity. 
 
Corollary \ref{infi} and Corollary \ref{lambda}  imply the following result.
 \begin{proposition} 
\emph{Assume that
\begin{align}
\inf_{\sigma\in G}p_{vw}(\sigma)=-\infty \ .
\end{align}
Then there exists an algebraic one parameter subgroup $\lambda$ of $G$ such that}
\begin{align}
\lim_{\alpha\ra 0}p_{vw}( {\lambda(\alpha)})= -\infty    \ .
\end{align}
\end{proposition}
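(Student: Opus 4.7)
The plan is to read the unboundedness of $p_{vw}$ as a statement about the geometry of orbit closures in $\mathbb{P}(\mathbb{V}\oplus\mathbb{W})$, apply the subspace version of the Hilbert-Mumford criterion (Proposition \ref{subspace} and Corollary \ref{lambda}) to produce an explicit one parameter subgroup, and then verify by a weight decomposition that this same one parameter subgroup already drives $p_{vw}$ itself to $-\infty$.

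First I would use Corollary \ref{infi} to translate the hypothesis $\inf_{\sigma\in G}p_{vw}(\sigma)=-\infty$ into the metric statement $d_g(\overline{\mathcal{O}}_{vw},\overline{\mathcal{O}}_{v})=0$. Because the two Zariski closures are compact subsets of $\mathbb{P}(\mathbb{V}\oplus\mathbb{W})$ in the classical topology, vanishing of the distance forces
\begin{align*}
\overline{\mathcal{O}}_{vw}\cap\overline{\mathcal{O}}_{v}\neq\emptyset.
\end{align*}
Next I would observe that $L:=\mathbb{P}(\mathbb{V}\oplus\{0\})$ is a proper $G$-invariant linear subspace of $\mathbb{P}(\mathbb{V}\oplus\mathbb{W})$ that contains $\overline{\mathcal{O}}_{v}$, so $\overline{\mathcal{O}}_{vw}\cap L\neq\emptyset$. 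Applying Proposition \ref{subspace} with $\mathbb{E}=\mathbb{V}\oplus\mathbb{W}$ and $u=(v,w)$ yields a maximal algebraic torus $T\leq G$ with $\overline{T\cdot[(v,w)]}\cap L\neq\emptyset$, and Corollary \ref{lambda} then produces an algebraic one parameter subgroup $\lambda:\mathbb{C}^{*}\to G$ with
\begin{align*}
\lim_{\alpha\to 0}\lambda(\alpha)\cdot[(v,w)]\in L.
\end{align*}

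The remaining step is to see that this $\lambda$ also makes $p_{vw}(\lambda(\alpha))\to-\infty$. I would decompose into $\lambda$-weight spaces, $v=\sum_{a}v_{a}$ and $w=\sum_{b}w_{b}$, and set $m:=\min\{a:v_{a}\neq 0\}$, $n:=\min\{b:w_{b}\neq 0\}$. Then $\|\lambda(\alpha)\cdot v\|^{2}\sim C_{v}|\alpha|^{2m}$ and $\|\lambda(\alpha)\cdot w\|^{2}\sim C_{w}|\alpha|^{2n}$ as $\alpha\to 0$, with $C_{v},C_{w}>0$. The projective limit $\lim_{\alpha\to 0}\lambda(\alpha)\cdot[(v,w)]$ is obtained by dividing $\lambda(\alpha)\cdot(v,w)$ by $\alpha^{\min(m,n)}$; this limit lies in $L=\mathbb{P}(\mathbb{V}\oplus\{0\})$ if and only if the $\mathbb{W}$-component is killed in the process, i.e.\ if and only if $m<n$. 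Consequently
\begin{align*}
p_{vw}(\lambda(\alpha))=\log\|\lambda(\alpha)\cdot w\|^{2}-\log\|\lambda(\alpha)\cdot v\|^{2}=2(n-m)\log|\alpha|+O(1),
\end{align*}
and since $n-m>0$ and $\log|\alpha|\to-\infty$ we conclude $p_{vw}(\lambda(\alpha))\to-\infty$.

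The only nontrivial input is Proposition \ref{subspace} (and its companion Corollary \ref{lambda}), which has already been established; once it is in hand, the rest is bookkeeping with weights. The main point requiring care is the strict inequality $m<n$: I expect the mild obstacle to be verifying that the limit lying in the \emph{projective} subspace $L$ really translates into $m<n$ rather than $m\leq n$, which one gets by noting that if $m=n$ the normalized limit has a nonzero $\mathbb{W}$ component coming from $w_{n}$, contradicting membership in $L$.
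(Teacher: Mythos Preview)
Your proposal is correct and follows exactly the route the paper indicates: the paper simply states that the proposition follows from Corollary \ref{infi} and Corollary \ref{lambda}, and you have filled in the details (compactness of orbit closures, the choice $L=\mathbb{P}(\mathbb{V}\oplus\{0\})$, and the weight computation showing the limit in $L$ forces $w_\lambda(v)<w_\lambda(w)$). The final weight computation you give is precisely the asymptotic expansion recorded later as Proposition \ref{fdkenergy}, so nothing here goes beyond what the paper intends.
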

\begin{definition}\label{properenergy}
\emph{$ p_{vw}$ is \textbf{\emph{proper}} if and only if there is an $m\in\mathbb{Z}_{>0}$ and a constant $B$ such that
\begin{align}
 p_{vw}(\sigma)+ \frac{1}{m+1}\log ||\sigma \cdot v||^2 \geq \frac{\deg(\mathbb{V})}{m+1}\log||\sigma||_{hs}^2 +B\ .
\end{align}
  $||\sigma||_{hs}$ denotes the Hilbert-Schmidt norm of $\sigma $ with respect to some Hermitian metric on the standard representation.}
\end{definition}  
\begin{theorem}
\emph{$(v,w)$ is a stable pair if and only if the energy $p_{vw}$ is proper.}
\end{theorem}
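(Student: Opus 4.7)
The plan is to reduce the theorem to Proposition \ref{vwlowerbound} applied to the ``perturbed'' pair $(\mathbb{I}^{q}\otimes v^m,\ w^{m+1})$ that appears in Definition \ref{stable}, by a direct computation of the energy of that pair.

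First I would unpack how the standard Hermitian norms behave on the relevant tensor products. Equip $\mathcal{GL}(N+1,\mathbb{C})$ with the Hilbert--Schmidt form, $\mathbb{V}$ and $\mathbb{W}$ with the chosen Hermitian forms, and take the induced norms on the tensor powers $\mathbb{V}^{\otimes m}$, $\mathbb{W}^{\otimes (m+1)}$ and $\mathcal{GL}(N+1,\mathbb{C})^{\otimes q}\otimes\mathbb{V}^{\otimes m}$. Because $\sigma\cdot(x\otimes y)=(\sigma\cdot x)\otimes(\sigma\cdot y)$ and the left regular action satisfies $\sigma\cdot\mathbb{I}=\sigma$, one obtains the multiplicative identities
\begin{align*}
\|\sigma\cdot w^{m+1}\|^{2}&=\|\sigma\cdot w\|^{2(m+1)},\\
\|\sigma\cdot(\mathbb{I}^{q}\otimes v^{m})\|^{2}&=\|\sigma\|_{hs}^{2q}\,\|\sigma\cdot v\|^{2m}.
\end{align*}
Substituting these into the definition \eqref{energy} of the energy of a pair yields the key identity
\begin{align*}
p_{\mathbb{I}^{q}\otimes v^{m},\,w^{m+1}}(\sigma)
=(m+1)\log\|\sigma\cdot w\|^{2}-m\log\|\sigma\cdot v\|^{2}-q\log\|\sigma\|_{hs}^{2}.
\end{align*}

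Next I would apply Proposition \ref{vwlowerbound} to the pair $(\mathbb{I}^{q}\otimes v^{m},\,w^{m+1})$: this pair is semistable if and only if $p_{\mathbb{I}^{q}\otimes v^{m},\,w^{m+1}}$ is bounded below on $G$, say by a constant $B'$. By Definition \ref{stable}, the existence of an $m$ for which this holds is precisely the stability of $(v,w)$. On the other hand, writing the above identity as
\begin{align*}
(m+1)\bigl(\log\|\sigma\cdot w\|^{2}-\log\|\sigma\cdot v\|^{2}\bigr)+\log\|\sigma\cdot v\|^{2}-q\log\|\sigma\|_{hs}^{2}\geq B'
\end{align*}
and dividing through by $m+1$ yields exactly the inequality of Definition \ref{properenergy} with $B=B'/(m+1)$ and $\deg(\mathbb{V})=q$. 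Running the computation backwards (multiply the properness inequality by $m+1$) gives a lower bound for $p_{\mathbb{I}^{q}\otimes v^{m},\,w^{m+1}}$, hence semistability of the perturbed pair by Proposition \ref{vwlowerbound}. Thus the two conditions are logically equivalent, with the \emph{same} exponent $m$ on each side.

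There is really no hard step: all the geometric content sits in Proposition \ref{vwlowerbound} (and ultimately in the numerical criterion, Theorem \ref{summary}). The only point requiring a small amount of care is the compatibility of the chosen Hermitian structures with tensor products and with the left regular representation, so that the formulas $\|\sigma\cdot w^{m+1}\|^{2}=\|\sigma\cdot w\|^{2(m+1)}$ and $\|\sigma\cdot(\mathbb{I}^{q}\otimes v^{m})\|^{2}=\|\sigma\|_{hs}^{2q}\|\sigma\cdot v\|^{2m}$ hold on the nose; any other natural choice of norms would only alter $B$ by an additive constant, which is harmless. With these identifications in place the proof is essentially a one-line algebraic rearrangement.
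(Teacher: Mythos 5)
Your argument is correct and is exactly the computation the paper intends: the paper leaves the proof implicit, but it is precisely the combination of Proposition \ref{vwlowerbound} applied to the perturbed pair $(\mathbb{I}^{q}\otimes v^{m},w^{m+1})$ with the multiplicativity of the induced tensor norms, which turns the lower bound on $p_{\mathbb{I}^{q}\otimes v^{m},\,w^{m+1}}$ into the coercive inequality of Definition \ref{properenergy} after dividing by $m+1$. The exponent $m$ matches on both sides, as you note, so nothing further is needed.
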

An application of Theorem \ref{summary} shows that there is a constant $C$ such that
\begin{align} 
   \deg(\mathbb{V}) \log||\sigma||_{hs}^2 \geq   \log ||\sigma \cdot v||^2 +C\ .
  \end{align}
\begin{remark}\label{exponent}
\emph{This inequality shows that if $(v,w)$ is stable with ``exponent $m$'', then it is ``$l$-stable'' for all $l\geq m$.}
\end{remark}
  \subsection{K-Energy Maps and (Semi)stable Pairs} Let $P$ be a numerical polynomial
 \begin{align}
 P(T)=c_n\binom{T}{n}+c_{n-1}\binom{T}{n-1}+\dots \ .
 \end{align}
 Let $\mathscr{H}^P_{\cpn}$denote the ``Hilbert Scheme'' of smooth, linearly normal subvarieties of $\cpn$ with Hilbert polynomial $P$.
 We require the following Theorem, recently obtained by the author.  
 \begin{theorem}\label{theorema}(Theorem A , Paul \cite{paul2012})
\emph{ There is a constant $M$ depending only on $ c_n , c_{n-1}$ and the Fubini Study metric such that for all points $[X]\in \mathscr{H}^P_{\cpn}$ and all $\sigma\in G$ we have
  \begin{align}
   \Bigg |\ d^2(n+1) \nu_{{\om_{FS}}|_{X}}(\vps)-  
  2  \log\frac { ||\sigma\cdot \Delta(X)^{\deg(R)}||}{||\sigma\cdot R(X)^{\deg(\Delta)}||} \ \Bigg| \leq M   \ .
 \end{align}
   $R$ and $\Delta$ have been scaled to unit length. }
 \end{theorem}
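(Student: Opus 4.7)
The plan is to prove the estimate by showing that the logarithmic norm ratio is a CM-line-bundle avatar of the Mabuchi functional, differing from $d^2(n+1)\nu_\om$ by a uniformly bounded error over the Hilbert scheme. The key inputs are two asymptotic identities, each relating a logarithmic norm evaluated at $\sigma\in G$ to a classical energy functional of the Bergman potential $\vps$. The first, Zhang's variant of Tian's formula for the Chow line,
\begin{align*}
-2\log\|\sigma\cdot R(X)\|^2 = d(n+1)(I_\om-J_\om)(\vps) + O_R(1),
\end{align*}
identifies the Chow-form norm with the Aubin--Yau functional up to an $O(1)$ error uniform on $\mathscr{H}^P_{\cpn}$. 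The second identity would express the hyperdiscriminant norm as a specific linear combination of $(I_\om-J_\om)(\vps)$ and the Ricci/entropy part of the Mabuchi energy, with the coefficient of $(I-J)$ dictated by the formula $\deg(\Delta)=d(n(n+1)-\mu)$ of Proposition \ref{degreeofhyp}.

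I would establish the first identity by applying the Knudsen--Mumford / Deligne pairing to the universal family $\mathscr{X}\to\mathscr{H}^P$ and identifying, via an arithmetic Riemann--Roch computation, the Fubini--Study norm on the Chow CM-line bundle with the Aubin--Yau functional composed with the Bergman parametrization $\sigma\mapsto\vps$. For the second, I would use the Cayley trick (Proposition \ref{cayleytrick}) to realize $\Delta_{\xhyp}$ as the discriminant of the smooth $(2n-1)$-fold $\xhyp$, and then perform an equivariant Bismut--Gillet--Soul\'e expansion on that auxiliary space. This is where the remark that $\Delta_{\xhyp}$ ``encodes only the Ricci curvature'' becomes operational: the expansion produces exactly the scalar-curvature and Ricci contributions absent from the Chow asymptotic.

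Combining the two asymptotics with the specific weights $\deg(\Delta)$ and $\deg(R)$ appearing in the theorem, and invoking the Chen entropy-plus-energy decomposition of the Mabuchi functional, the $(I-J)_\om$ contributions recombine into the $-\mu(I_\om-J_\om)/(n+1)$ summand of $\nu_\om$, while the Ricci contribution from the hyperdiscriminant delivers the entropy and Ricci--Aubin--Yau summands, yielding
\begin{align*}
2\deg(R)\log\|\sigma\cdot\Delta(X)\| - 2\deg(\Delta)\log\|\sigma\cdot R(X)\| = d^2(n+1)\nu_\om(\vps) + O(1).
\end{align*}
Uniformity over $[X]\in\mathscr{H}^P$ then follows because the norms on the two CM-line bundles are continuous in $[X]$, bounded on the (quasi-projective) locus of smooth linearly normal embeddings with Hilbert polynomial $P$, with bounds depending only on $c_n,c_{n-1}$ and $\om_{FS}$.

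The hardest step will be the hyperdiscriminant asymptotic. The Chow/resultant case essentially reduces to classical Riemann--Roch; in contrast, the hyperdiscriminant asymptotic requires tracking an extra $SL(n,\mathbb{C})$-equivariance and extracting Ricci and scalar-curvature data from a dual-variety divisor class via the Cayley trick. Demonstrating that the error is bounded uniformly in $[X]$ further requires checking that the assignment $[X]\mapsto[\Delta_{\xhyp}]$ factors through a fixed projective space carrying a fixed Fubini--Study-type norm, so that the error terms reduce to continuous functions of the bounded geometric data determined by $(c_n,c_{n-1},\om_{FS})$.
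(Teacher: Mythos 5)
First, a structural point: the paper does not actually prove Theorem \ref{theorema} --- it is imported verbatim as Theorem A of \cite{paul2012} (``We require the following Theorem, recently obtained by the author''), so there is no internal argument to measure your proposal against. Judged on its own terms, your outline does capture the correct architecture of the external proof (a Chow-norm asymptotic for $R(X)$, a second asymptotic for $\Delta_{\xhyp}$ carrying the Ricci/entropy data, recombination via the degree normalization $\deg(\Delta)\deg(R)$), but it has concrete gaps. The most serious is the uniformity claim: you argue that the $O(1)$ errors are bounded over $\mathscr{H}^P_{\cpn}$ because the relevant norms are ``continuous in $[X]$, bounded on the (quasi-projective) locus'' of smooth linearly normal embeddings. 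Continuity on a non-compact quasi-projective parameter space gives no bound whatsoever; the entire content of the phrase ``$M$ depends only on $c_n$, $c_{n-1}$ and the Fubini--Study metric'' is that the error terms must be estimated explicitly (they reduce to integrals over $X$ of fixed curvature quantities of $\om_{FS}$, controlled by the degree and the subdominant Hilbert coefficient), not inferred from a compactness that is absent.

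Second, your first identity is misstated: $-2\log\|\sigma\cdot R(X)\|^2$ is not $d(n+1)(I_\om-J_\om)(\vps)+O(1)$. The Chow-norm asymptotic recorded in the paper (quoting \cite{gacms}) is $d^2(n+1)J_{\om}(\vps)=d(n+1)\int_X\vps\,\om^n-d\log\|\sigma\cdot R\|^2$, which contains the unbounded linear term $\int_X\vps\,\om^n$; that term is not $O(1)$ and only disappears from the final estimate because it cancels against the identical term produced by the $\Delta$-asymptotic after weighting by $\deg(\Delta)$ and $\deg(R)$ --- this cancellation is precisely why the degrees are normalized, and your bookkeeping must exhibit it rather than absorb the term into an error. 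Finally, the step you yourself flag as hardest --- that an equivariant Bismut--Gillet--Soul\'e expansion on $\xhyp$ ``produces exactly'' the entropy and Ricci contributions --- is asserted, not argued; in \cite{paul2012} this is the entire technical content of the theorem and is carried out by an explicit analysis of the conormal/incidence geometry of $\xhyp$ and the resulting Monge--Amp\`ere-type integrals rather than by analytic torsion, and one must also justify that $[X]\mapsto\Delta_{\xhyp}$ lands in a single fixed representation with a fixed norm. As it stands the proposal is a plausible roadmap, not a proof.
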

We now can give a proof of Theorem \ref{mainresult}. Curiously, the harder direction of the equivalence is
\begin{align}
\mbox{PROPER $\implies$ STABLE  }
\end{align}
This requires Theorem \ref{summary}. The other direction is easy.
Theorem \ref{theorema} and the assumption that $\nu_{\om}$ is proper on $\mathcal{B}$  give the inequality  
\begin{align}
    \log\frac { ||\sigma\cdot \Delta(X)^{\deg(R)}||^2}{||\sigma\cdot R(X)^{\deg(\Delta)}||^2}\geq Ad^2(n+1)J_{\om}(\vps)+C \ 
\end{align}
where $A>0$ and $C$ are constants . 
Recall from \cite{gacms} that
\begin{align}
\begin{split}
d^2(n+1)J_{\om}(\vps)&=d(n+1)\int_X\vps\om^n-d\log||\sigma\cdot R||^2 \\
\ \\
&=\frac{1}{n(n+1)-\mu}\Big(-\log||\sigma\cdot R^{\deg(\Delta)}||^2 + \deg(\elam)\int_X\vps\frac{\om^n}{d}\Big) \ .
\end{split}
\end{align}
Choose $m\in \mathbb{Z}>0$ so that
\begin{align}
\frac{A}{n(n+1)-\mu}\geq \frac{1}{m+1} \ .
\end{align}
Now we have
\begin{align}\label{inequality}
(m+1) \log\frac { ||\sigma\cdot \Delta(X)^{\deg(R)}||^2}{||\sigma\cdot R(X)^{\deg(\Delta)}||^2}\geq  \deg(\elam)\int_X\vps\frac{\om^n}{d}-\log||\sigma\cdot R^{\deg(\Delta)}||^2 +C \ .
\end{align}
The trick to deal with the mean is to use the numerical criterion. Let $\lambda$ be any one parameter subgroup of $G$. Then the inequality (\ref{inequality}) implies that
\begin{align}
(m+1)\big(w_{\lambda}(\Delta(X)^{\deg(R)})-w_{\lambda}(R(X)^{\deg(\Delta)})\big) \leq \deg(\elam)w_{\lambda}(\mathbb{I})-w_{\lambda}(R(X)^{\deg(\Delta)}) \ .
\end{align}
By Theorem \ref{summary} item $4)$ this is equivalent to the stability of $X\ra\cpn$ with exponent $m$.
\subsection{The Classical Futaki Character}\label{futaki}
 As in the preceding sections  $G$ denotes a classical linear algebraic group. $\mathbb{V}$, $\mathbb{W}$ are finite dimensional complex rational representations of $G$. Let $v\in \mathbb{V}\setminus \{0\}$ and $w\in \mathbb{W}\setminus \{0\}$ . As usual $[v]$ denotes the corresponding point in the projective space $\mathbb{P}(\mathbb{V})$ and $G_{[v]}$ denotes the stabilizer of the line through $v$. Therefore there is a \emph{character}
\begin{align*}
\chi_{v}:G_{[v]}\ra \mathbb{C}^* \ , \ \sigma\cdot v= \chi_{v}(\sigma)\cdot v \ .
\end{align*}
\begin{definition} \emph{Let $v\in \mathbb{V}\setminus \{0\}$ and $w\in \mathbb{W}\setminus \{0\}$ . Then the \textbf{\emph{automorphism group}} of the pair $(v,w)$ is
the algebraic subgroup of $G$ given by 
\begin{align*}
\mbox{Aut}(v,w):=G_{[v]}\cap G_{[w]} \ .
\end{align*}}
\end{definition}
Let $\mathfrak{aut}(v,w)$ denote the Lie algebra of  $\mbox{Aut}^o(v,w)$ .
\begin{example}
\emph{Let $X\ra \cpn$ be a smooth subvariety of $\cpn$ satisfying our usual hypotheses. Then
\begin{align}
 \mbox{Aut}(X , \mathcal{O}(1)|_X)\cong  \mbox{Aut}(R(X),\Delta(X)) \ .
\end{align}
}
\end{example}
 \begin{definition} \emph {Let $\mathbb{V} , \mathbb{W}$ be finite dimensional complex rational representations of $G$. Let $v,w$ be two nonzero vectors in $\mathbb{V} , \mathbb{W}$ respectively. Then the \textbf{\emph{Futaki character}} of the pair $(v,w)$ is  the algebraic homomorphism
\begin{align}
F:=\chi_{w}\chi_{v}^{-1}: \mbox{Aut}(v,w) \ra \mathbb{C}^* \ 
\end{align}
induced by the one dimensional representation $\mathbb{C}w\otimes (\mathbb{C}v )^{\vee}$ . We set $ {F}_*$ to be the corresponding Lie algebra character
\begin{align*}
{F}_{*}:={d\chi_{w}}-{d\chi_{v}}:\mathfrak{aut}(v,w)\ra \mathbb{C} \ 
\end{align*}
where ${d\chi_{v}}$ denotes the differential of $\chi_{v}$ at the identity.}
\end{definition}
\begin{remark}
\emph{At this point the order is not important. That is, we could equally well consider $\chi_{w}^{-1}\chi_{v}$ .}
\end{remark}
Let $\tau\in G$ and $\sigma\in G_{[v]}\cap G_{[w]}$, then the diagram below is commutative.
\begin{align}
\xymatrix{
 \mathbb{C}w\otimes (\mathbb{C}v)^{\vee} \ar[d] ^{\chi_{w}\chi_{v}^{-1}(\sigma) } \ar[r]^{\alpha_{\tau}}&\mathbb{C}\tau\cdot w\otimes (\mathbb{C}\tau\cdot v)^{\vee} \ar[d]^{\chi_{\tau\cdot w}\chi_{\tau\cdot v}^{-1}(Ad_{\tau^{-1}}(\sigma))} \\
\mathbb{C}w\otimes (\mathbb{C}v)^{\vee} \ar[r] ^{\alpha_{\tau}}&\mathbb{C}\tau\cdot w\otimes (\mathbb{C}\tau\cdot v)^{\vee} 
 }  
 \end{align}
 This shows that the Futaki character only depends on the \emph{orbit} of the pair $(v,w)$ .

We can decompose the identity component of $\mbox{Aut}(v,w)$
\begin{align}
\mbox{Aut}^o(v,w)=S\rtimes U \ ,
\end{align} 
 $S$ is reductive and $U$ is unipotent . Then we have that
 \ \\
 \begin{center}\emph{ $F$ is completely determined  on $S$}.  \end{center}
 \ \\
 Let $T\leq S$ be any maximal algebraic torus in $S$. Then we have that 
 \ \\
\begin{center}\emph{ $F $ is completely determined  on $T$}.  \end{center}

Recall that
 \begin{align}
 N_{\mathbb{Z}}(T)\cong \{\mbox{algebraic one parameter subgroups $\lambda$ of $T$ }\} \ .
 \end{align}
Finally we have that
\begin{center}\emph{ $F $ is completely determined  on $N_{\mathbb{Z}}(T)$.}\end{center}
\begin{align}
F : N_{\mathbb{Z}}(T)\ra \mathbb{Z} \ , \ F (\lambda)=<\chi_{w},\lambda>- <\chi_{v},\lambda> \ .
\end{align}
In many cases one has that $\mbox{Aut}^o(v,w)$ is {trivial}. In such a situation we  introduce a generalization of $F$. In what follows $H$ denotes a maximal algebraic torus of $G$.
\begin{definition}\label{weight}
\emph{Let $\mathbb{V}$ be a rational representation of $G$. Let $\lambda$ be any degeneration in $H$  . The \textbf{\emph{weight}}  $w_{\lambda}(v)$  of $\lambda$ on $v\in \mathbb{V}\setminus\{0\}$ is the integer}
\begin{align}
w_{\lambda}(v):= \mbox{\emph{min}}_{ \{ x\in \mathcal{N}(v)\}}\ l_{\lambda}(x)= \mbox{\emph{min}} \{ <\chi,\lambda>| \chi \in \mbox{\emph{supp}}(v)\}\ .
\end{align}
 \noindent\emph{ Alternatively, $w_{\lambda}(v)$ is the unique integer such that}
\begin{align}
\lim_{|t|\rightarrow 0}t^{-w_{\lambda}(v)}\lambda(t)v \  \mbox{ {exists in $\mathbb{V}$ and is \textbf{not} zero}}.
\end{align}
\end{definition}
Let $\Delta(G)$ denote the space of algebraic one parameter subgroups of $G$.   
\begin{definition} \emph{The \textbf{\emph{generalized Futaki character}} of the pair $(v,w)$ is the map }
\begin{align}
F_{gen} :\Delta (G)\ra \mathbb{Z}\ , \ F_{gen} (\lambda):=w_{\lambda}(w)-w_{\lambda}(v) \ .
\end{align}
\end{definition}

The following is a restatement of Theorem \ref{numericalcriterion}. 
\begin{proposition}\emph{$F_{gen}(\lambda)\leq 0$ for all $\lambda \in \Delta(G)$ if and only if $(v,w)$ is  semistable.  }
\end{proposition}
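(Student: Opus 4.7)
The plan is to observe that this proposition is a pure translation of the numerical criterion (Theorem \ref{summary}) into the language of the generalized Futaki character, so the proof should consist entirely of unwinding the two definitions.

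First, I would unpack the definition. By construction,
\begin{align*}
F_{gen}(\lambda)=w_{\lambda}(w)-w_{\lambda}(v)\quad \text{for every } \lambda\in\Delta(G).
\end{align*}
Consequently the condition ``$F_{gen}(\lambda)\leq 0$ for all $\lambda\in\Delta(G)$'' is logically identical to the condition
\begin{align*}
w_{\lambda}(w)\leq w_{\lambda}(v)\quad\text{for every algebraic one parameter subgroup }\lambda\leq G.
\end{align*}

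Next, I would invoke Theorem \ref{summary} directly. The equivalence of items (1) and (4) in that theorem asserts exactly that $(v,w)$ is semistable if and only if $w_{\lambda}(w)\leq w_{\lambda}(v)$ for every such $\lambda$. Chaining this with the previous display yields the desired biconditional.

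There is no genuine obstacle here: the entire weight of the argument is already carried by Theorem \ref{summary}, whose proof reduces semistability to $T$-semistability for all maximal tori via Proposition \ref{subspace}, and then translates $T$-semistability into the weight inequality via the Richardson-style limit argument of Proposition \ref{richardsontori}. The content of the present proposition is therefore only the remark that the resulting numerical criterion is precisely the statement that the generalized Futaki character is non-positive on every one parameter subgroup.
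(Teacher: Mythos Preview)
Your proposal is correct and matches the paper's approach: the paper itself simply declares the proposition to be ``a restatement of Theorem \ref{numericalcriterion}'' without giving a separate proof. Your observation that the equivalence is literally item (1)$\Leftrightarrow$(4) of Theorem \ref{summary} after unwinding the definition of $F_{gen}$ is exactly right, and arguably a slightly sharper citation than the paper's own pointer.
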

 
The energy of the pair $(v,w)$ and the generalized character $F_{gen}$ are related as follows (see also \cite{paul2012} (2.30) pg. 269 ) .
 \begin{proposition}\label{fdkenergy}
\emph{ Let $\lambda\in\Delta(G)$ . Then there is an asymptotic expansion as $|t|\ra 0$
 \begin{align}
 p_{vw}(\lambda(t))=F_{gen}(\lambda)\log|t|^2+O(1) \ .  
 \end{align}}
\end{proposition}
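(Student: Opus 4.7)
The plan is to specialize each term in $p_{vw}(\sigma) = \log\|\sigma\cdot w\|^2 - \log\|\sigma\cdot v\|^2$ to $\sigma = \lambda(t)$ and read off the leading-order behaviour using the weight-space decomposition of $\mathbb{V}$ and $\mathbb{W}$ under a maximal torus containing the image of $\lambda$.

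First I would fix a maximal algebraic torus $T\le G$ with $\lambda(\mathbb{C}^*)\subset T$ and decompose
\begin{align*}
v \;=\; \sum_{\chi \in \mathscr{A}_T(v)} v_\chi,\qquad v_\chi \in \mathbb{V}_\chi\setminus\{0\}.
\end{align*}
Setting $m := w_\lambda(v) = \min\{(\chi,\lambda)\ :\ \chi \in \mathscr{A}_T(v)\}$, a direct computation gives
\begin{align*}
t^{-m}\lambda(t)\cdot v \;=\; \sum_{\chi \in \mathscr{A}_T(v)} t^{(\chi,\lambda) - m}\, v_\chi,
\end{align*}
in which every exponent is a nonnegative integer and at least one exponent vanishes. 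As $|t|\to 0$, each term with $(\chi,\lambda)>m$ tends to zero, while the zero-exponent terms sum to the nonzero vector $v_m := \sum_{(\chi,\lambda)=m} v_\chi$. Using $|t^{-m}|=|t|^{-m}$, this yields
\begin{align*}
|t|^{-m}\bigl\|\lambda(t)\cdot v\bigr\| \;\longrightarrow\; \|v_m\| \;>\; 0 \quad \text{as } |t|\to 0,
\end{align*}
which is equivalent to $\log\|\lambda(t)\cdot v\|^2 = w_\lambda(v)\log|t|^2 + O(1)$.

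The identical argument applied to $w$ produces $\log\|\lambda(t)\cdot w\|^2 = w_\lambda(w)\log|t|^2 + O(1)$. Subtracting the two expansions and invoking the definition $F_{gen}(\lambda) = w_\lambda(w) - w_\lambda(v)$ yields $p_{vw}(\lambda(t)) = F_{gen}(\lambda)\log|t|^2 + O(1)$, which is the claim.

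There is no substantive obstacle: the argument is essentially a restatement of the alternative characterization of $w_\lambda$ recorded in Definition \ref{weight}. The only point that merits brief attention is confirming that the convergence $|t|^{-m}\|\lambda(t)\cdot v\|\to \|v_m\|$ holds uniformly as $|t|\to 0$ independently of $\arg t$; this is immediate because the only dependence on $\arg t$ in $t^{-m}\lambda(t)\cdot v$ enters through bounded unit-modulus factors $e^{i(\arg t)((\chi,\lambda) - m)}$, which cannot affect the limiting norm.
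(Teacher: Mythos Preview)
Your argument is correct. The paper does not actually supply a proof of this proposition; it is simply stated with a cross-reference to equation (2.30) of \cite{paul2012}, so there is nothing to compare against beyond noting that your weight-space computation is exactly the standard verification of the alternative description of $w_\lambda(v)$ given in Definition \ref{weight}. The one point worth making explicit (and you essentially do) is that $v_m=\sum_{(\chi,\lambda)=m}v_\chi$ is genuinely nonzero because the $v_\chi$ lie in linearly independent weight spaces of $T$, so the limit $|t|^{-m}\|\lambda(t)\cdot v\|\to\|v_m\|>0$ indeed yields a two-sided bound and hence the $O(1)$ remainder.
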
 
 In particular for $\sigma\in \mbox{Aut}^o(v,w)$ we have
 \begin{align}
 p_{vw}(\sigma) =\log|\chi_{w}(\sigma)|^2-\log|\chi_{v}(\sigma)|^2 \ . 
 \end{align}

We study the relationship between the generalized and classical Futaki invariants.  We have, as in the previous section, the Levi decomposition of $\mbox{Aut}^o(v,w)$
\begin{align}
\mbox{Aut}^o(v,w)=S\ltimes U \ ,
\end{align}
$S$ is reductive and $U$ is the unipotent radical of $\mbox{Aut}^o(v,w)$. Let $T \leq S$ be any maximal algebraic torus (possibly trivial). Since
$S\leq G$ there is a maximal algebraic torus $H$ in $G$ containing $T$. Fix any such $H$.   Then we have the short exact sequence of lattices
 \begin{align}
 0\ra L_{\mathbb{Z}}\xrightarrow{\iota}M_{\mathbb{Z}}(H)\xrightarrow{\pi_T}M_{\mathbb{Z}}(T)\ra 0 \ . 
   \end{align}
 
Recall that $\sigma\in \mbox{Aut}^o(v,w)$ acts on $w$ (resp. $v$)  via a character $\chi_w$ (resp. $\chi_v$ ) . We have the following.
\begin{proposition}\emph{ All the characters in the $H$ support of $w$ (or $v$) coincide upon restriction to $T$
\begin{align}
\begin{split}
&\chi\in\mbox{supp}(w)\Rightarrow \pi_T(\chi)=\chi_w \\
\ \\
&\eta\in\mbox{supp}(v)\Rightarrow \pi_T(\eta)=\chi_v \ .
\end{split}
\end{align}
Consequently, the difference of any two characters in supp($w$) (or supp($v$))  lies in $ {L_{\mathbb{Z}}}$ .  }
\end{proposition}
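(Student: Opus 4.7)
The plan is to extract the result directly from the $H$-weight decomposition of $w$ (respectively $v$) combined with the defining property of the character $\chi_w$ (respectively $\chi_v$).

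First, I would write the $H$-weight decomposition
\begin{align*}
w=\sum_{\chi\in\mathrm{supp}(w)} w_{\chi}, \qquad w_\chi\in \mathbb{W}_\chi\setminus\{0\},
\end{align*}
so that for $h\in H$ one has $h\cdot w=\sum_\chi \chi(h)\,w_\chi$. The key observation is that for $t\in T$, the value $\chi(t)$ depends only on the image $\pi_T(\chi)\in M_{\mathbb{Z}}(T)$, since $L_{\mathbb{Z}}=\ker(\pi_T)$ is precisely the lattice of $H$-characters that vanish on $T$.

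Next, because $T\leq \mathrm{Aut}^o(v,w)\leq G_{[w]}$, the torus $T$ acts on $w$ through the character $\chi_w$, giving $t\cdot w=\chi_w(t)\,w$ for all $t\in T$. Comparing this to the weight decomposition yields
\begin{align*}
\sum_{\chi\in\mathrm{supp}(w)}\bigl(\pi_T(\chi)(t)-\chi_w(t)\bigr)\,w_\chi=0 \qquad \text{for all } t\in T.
\end{align*}
Since the $w_\chi$ lie in distinct $H$-eigenspaces they are linearly independent, so each coefficient must vanish. This forces $\pi_T(\chi)(t)=\chi_w(t)$ for all $t\in T$, i.e.\ $\pi_T(\chi)=\chi_w$ for every $\chi\in\mathrm{supp}(w)$. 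The identical argument, with $v$ and $G_{[v]}$ in place of $w$ and $G_{[w]}$, gives $\pi_T(\eta)=\chi_v$ for every $\eta\in\mathrm{supp}(v)$.

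For the final sentence, if $\chi_1,\chi_2\in \mathrm{supp}(w)$ then $\pi_T(\chi_1-\chi_2)=\chi_w-\chi_w=0$, so $\chi_1-\chi_2\in\ker(\pi_T)=L_{\mathbb{Z}}$, and likewise for $v$. There is no real obstacle here; the only subtlety is to remember that $L_{\mathbb{Z}}$ was defined as $\ker(\pi_T)$ precisely so that ``two $H$-characters have the same restriction to $T$'' is equivalent to ``their difference lies in $L_{\mathbb{Z}}$'', and to use the linear independence of the $H$-weight components to pass from the scalar identity $t\cdot w=\chi_w(t)w$ to an identity among characters.
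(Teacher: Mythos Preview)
Your argument is correct. The paper actually states this proposition without proof, treating it as an immediate observation; your write-up supplies exactly the standard justification (compare the $H$-weight expansion of $t\cdot w$ with the scalar action $\chi_w(t)w$ and use linear independence of the distinct $H$-weight components), which is precisely the reasoning the paper is tacitly invoking.
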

 
Extending scalars to $\mathbb{R}$ gives the sequence
\begin{align}
0\ra {L_{\mathbb{R}}}\xrightarrow{ {\iota}} M_{\mathbb{R}}(H)&\xrightarrow{ {\pi_T}} M_{\mathbb{R}}(T)\ra 0\ .
\end{align}

Then $\mathcal{N}(w)$ and $\mathcal{N}(v)$ both lay in affine subspaces of ${\mathbb{R}}^N$ modeled on $ {L_{\mathbb{R}}}$ . 
Now we suppose that 
\begin{align}
\mathcal{N}(v)\subseteq \mathcal{N}(w) \ .
\end{align}

Since ${ {\pi_T}}$ is {linear} we have that
\begin{align}
\{\chi_v\}={ {\pi_T}}\big(\mathcal{N}(v)\big)\subseteq {{\pi_T}}\big(\mathcal{N}(w)\big)=\{\chi_w\} \ .
\end{align}
We conclude 
\begin{align}
\chi_w=\chi_v \ . 
\end{align} 

We summarize the relationship between the character of the pair and $T$-semistability.
\begin{proposition}
\emph{ \begin{align}
 \begin{split}
&a)\ \mbox { $\mathcal{N}(v)$ and $\mathcal{N}(w)$ both lie in parallel affine subspaces of $M_{\mathbb{R}}$ modeled on $L_{\mathbb{R}}$ . }\\
&\mbox{Precisely, for any choice of $\chi\in \mbox{supp}(v)$ and $\eta\in \mbox{supp}(w)$ we have }\\
& \mathcal{N}(v)\subset \mathbb{A}_v:=\chi + L_{\mathbb{R}} \ \mbox{and}\ \mathcal{N}(w)\subset \mathbb{A}_w:=\eta + L_{\mathbb{R}} \ . \\
\ \\
&b) \ F_*\equiv 0 \ \mbox{if and only if} \ \mathbb{A}_v=\mathbb{A}_w \ . \\
\ \\
&c)\ F_*\not\equiv 0 \ \mbox{if and only if}\ \mathbb{A}_v\cap\mathbb{A}_w=\emptyset \ .\\
\ \\
& d)\ \mbox{ $F_*\equiv 0$  whenever the pair $(v,w)$ is semistable .}
 \end{split}
 \end{align}} 
 \end{proposition}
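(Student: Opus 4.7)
The plan is to derive all four items from two inputs already at hand: the proposition immediately preceding, which pins down the image under $\pi_T$ of each $H$-support, and the numerical criterion Theorem~\ref{summary}, which furnishes $\mathcal{N}(v)\subseteq\mathcal{N}(w)$ whenever $(v,w)$ is semistable. Items (a)--(c) are essentially affine linear algebra, and (d) will be obtained by transporting the semistability inclusion through $\pi_T$.

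For (a) I would start from the preceding proposition, which asserts $\pi_T(\chi)=\chi_v$ for every $\chi\in\mbox{supp}(v)$; hence any two elements of $\mbox{supp}(v)$ differ by an element of $\ker(\pi_T)=L_{\mathbb{Z}}\subset L_{\mathbb{R}}$. Thus $\mbox{supp}(v)\subseteq\mathbb{A}_v$ for any fixed $\chi\in\mbox{supp}(v)$, and taking convex hulls yields $\mathcal{N}(v)\subseteq\mathbb{A}_v$. The same argument applied to $w$ gives $\mathcal{N}(w)\subseteq\mathbb{A}_w$, and both affine subspaces are translates of $L_{\mathbb{R}}$, hence parallel by construction. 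For (b), the key reduction is from $F_{*}\equiv 0$ on the full Lie algebra $\mathfrak{aut}(v,w)$ to the equality $\chi_w=\chi_v$ in $M_{\mathbb{Z}}(T)$. I would invoke the Levi decomposition $\mbox{Aut}^o(v,w)=S\ltimes U$: characters of the unipotent radical $U$ are trivial, and a character of a connected reductive group $S$ is determined by its restriction to any maximal torus $T\leq S$, so $F$ is trivial on $\mbox{Aut}^o(v,w)$ iff $\chi_w|_T=\chi_v|_T$. Combined with $\chi_w=\pi_T(\eta)$ and $\chi_v=\pi_T(\chi)$ from the preceding proposition, this equality translates into $\eta-\chi\in L_{\mathbb{R}}$, i.e.\ $\mathbb{A}_v=\mathbb{A}_w$. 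Item (c) is then immediate from elementary affine geometry: two translates of a common linear subspace either coincide or are disjoint, so $\mathbb{A}_v\neq \mathbb{A}_w$ forces $\mathbb{A}_v\cap\mathbb{A}_w=\emptyset$, and combining with (b) gives the stated equivalence.

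For (d), rather than route through (c), I would argue directly by pushing forward with $\pi_T$: if $(v,w)$ is semistable, then Theorem~\ref{summary}(3) applied to the maximal torus $H$ gives $\mathcal{N}(v)\subseteq\mathcal{N}(w)$, whence $\{\chi_v\}=\pi_T(\mathcal{N}(v))\subseteq \pi_T(\mathcal{N}(w))=\{\chi_w\}$, so $\chi_v=\chi_w$ and the reduction in (b) delivers $F_{*}\equiv 0$. The only step that is not purely formal is the reduction in (b) from vanishing of $F_{*}$ on $\mathfrak{aut}(v,w)$ to equality of restrictions $\chi_w|_T=\chi_v|_T$; this rests on standard structure theory for characters of connected algebraic groups (triviality of characters of unipotent groups, and determination of characters of a connected reductive group by their restriction to a maximal torus). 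Everything else follows mechanically from the preceding proposition, the numerical criterion, and the parallel-translate structure of $\mathbb{A}_v$ and $\mathbb{A}_w$.
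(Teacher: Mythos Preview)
Your proposal is correct and follows essentially the same route as the paper. The paper does not give a separate formal proof of this proposition; it derives (d) inline just before the statement by applying $\pi_T$ to the inclusion $\mathcal{N}(v)\subseteq\mathcal{N}(w)$ (your exact argument), and leaves (a)--(c) to follow from the preceding proposition together with the earlier remarks that $F$ is determined on $T$ via the Levi decomposition and that parallel affine translates of $L_{\mathbb{R}}$ either coincide or are disjoint --- precisely the ingredients you invoke.
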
 
 \begin{proposition}\label{finiteauto}
\emph{A stable pair $(v,w)$ has finite automorphism group.}
\end{proposition}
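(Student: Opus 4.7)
The plan is to argue by contradiction using the characterization \emph{stable} $\Longleftrightarrow$ \emph{energy proper} recorded immediately above in Section \ref{kempfness}. Because $\mbox{Aut}(v,w) = G_{[v]} \cap G_{[w]}$ is an algebraic subgroup of $G$, it has only finitely many connected components, so it suffices to prove that its identity component $\mbox{Aut}^o(v,w)$ is trivial. I would therefore assume $\dim \mbox{Aut}^o(v,w) > 0$ and use the Levi decomposition $\mbox{Aut}^o(v,w) = S \ltimes U$ to extract either a nontrivial algebraic torus (if $\dim S > 0$) or a copy of $\mathbb{G}_a$ (if $\dim S = 0$, so that $U$ is positive-dimensional). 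In both cases the goal is to plug the resulting one-parameter family into the properness inequality of Definition \ref{properenergy} and compare orders of growth in $\log|t|$.

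In the torus case, fix a nontrivial algebraic one-parameter subgroup $\lambda:\mathbb{C}^* \to \mbox{Aut}^o(v,w) \subset G$. Since $\lambda(t) \in G_{[v]} \cap G_{[w]}$, there are integers $a,b$ with $\lambda(t)\cdot v = t^a v$ and $\lambda(t)\cdot w = t^b w$, so the left-hand side of the properness inequality at $\sigma = \lambda(t)$ equals $\bigl(2b - \tfrac{2am}{m+1}\bigr)\log|t| + O(1)$. Diagonalizing $\lambda$ on the standard $(N{+}1)$-dimensional representation with weights $r_1,\dots,r_{N+1}$, one has $\log\|\lambda(t)\|_{hs}^2 \sim 2r^+\log|t|$ as $|t|\to\infty$ and $\log\|\lambda(t)\|_{hs}^2 \sim 2r^-\log|t|$ as $|t|\to 0$, where $r^+ := \max_i r_i$ and $r^- := \min_i r_i$. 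For any classical $G$, the tracelessness (for $SL$) or skew-symmetry (for $SO,Sp$) of the Cartan combined with $\lambda \neq 1$ forces $r^+ > 0 > r^-$. Matching leading coefficients of $\log|t|$ in the two limits $|t|\to\infty$ and $|t|\to 0$ in the properness inequality yields simultaneously $b(m+1)-am \geq qr^+$ and $b(m+1)-am \leq qr^-$, hence $qr^+ \leq qr^-$, which (since $q=\deg(\mathbb{V})\geq 1$) contradicts $r^+ > r^-$.

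For the unipotent case, every algebraic character $G\to\mathbb{C}^*$ restricts trivially to any unipotent subgroup, because any algebraic morphism $\mathbb{A}^1 \to \mathbb{G}_m$ is a nonzero constant. Thus a nontrivial morphism $\sigma:\mathbb{G}_a \to \mbox{Aut}^o(v,w)$ satisfies $\sigma(t)\cdot v = v$ and $\sigma(t)\cdot w = w$, so both $p_{vw}(\sigma(t))$ and $\log\|\sigma(t)\cdot v\|^2$ are constant in $t$. However some matrix entry of $\sigma(t)$ is a nonconstant polynomial in $t$, so $\log\|\sigma(t)\|_{hs}^2 \to \infty$ as $|t|\to\infty$, and the properness inequality is violated.

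The main obstacle I anticipate is the combinatorial bookkeeping in the torus case: one must evaluate the properness inequality in both limits $|t|\to 0$ and $|t|\to\infty$ and observe that the common value $b(m+1)-am$ is sandwiched between $qr^+$ and $qr^-$ the wrong way around. The auxiliary fact $r^+ > 0 > r^-$ also needs a uniform verification across the classical groups $SL$, $SO$, $Sp$. By comparison the unipotent subcase is essentially a one-line consequence of the vanishing of characters on $\mathbb{G}_a$.
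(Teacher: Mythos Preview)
Your proof is correct and follows essentially the same strategy as the paper's: Levi-decompose $\mbox{Aut}^o(v,w)=S\ltimes U$, kill $U$ by observing that characters vanish on unipotents so the left side of the properness inequality is constant while $\|\sigma\|_{hs}\to\infty$, and kill any torus in $S$ by feeding a one-parameter subgroup $\lambda$ into the properness inequality and comparing growth rates. Your torus step is in fact slightly more streamlined than the paper's: the paper first invokes semistability to get $\chi_v=\chi_w$ (Futaki vanishing) and then combines the properness inequality at $\lambda$ with the a priori bound $w_\lambda(v)\geq \deg(\mathbb{V})\,w_\lambda(\mathbb{I})$ and a pass to $\lambda^{-1}$, whereas you bypass this by reading off the sandwich $qr^+\leq b(m{+}1)-am\leq qr^-$ directly from the two limits $|t|\to\infty$ and $|t|\to 0$.
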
 
\begin{proof}
Decompose the automorphism group
\begin{align}
\mbox{Aut}^o(v,w)=S\rtimes U \ ,
\end{align} 
where $S$ is reductive and $U$ is unipotent. Let $u\in U$. Recall that stability is equivalent to the coercive estimate
\begin{align}
 p_{vw}(\sigma)+ \frac{1}{m+1}\log ||\sigma \cdot v||^2 \geq \frac{\deg(\mathbb{V})}{m+1}\log||\sigma||_{hs}^2 +B\ .
\end{align}
Since $\chi_v$  and $\chi_w$ are trivial on $U$ we see that there is a constant $R$ (independent of $u$) such that
\begin{align}
 ||u||_{hs}^2\leq R \ .
\end{align}
Since a compact algebraic group is finite $U$ reduces to $\{1\}$. Now we deal with $S$.
Stability implies semistability, therefore the Futaki character $F$ is trivial. In particular for any degeneration $\lambda:\mathbb{C}^*\ra S$ we have the inequality
\begin{align}\label{weightineq}
<\chi_v,\lambda>\leq \deg(\mathbb{V})w_{\lambda}(\mathbb{I})\ .
\end{align}
Since the reverse inequality always holds we have \emph{equality} in (\ref{weightineq}). Observe that
\begin{align}
\deg(\mathbb{V})w_{\lambda^{-1}}(\mathbb{I})=<\chi_v,\lambda^{-1}>=-\deg(\mathbb{V})w_{\lambda}(\mathbb{I})
\end{align}
Therefore
\begin{align}
w_{\lambda^{-1}}(\mathbb{I})=-w_{\lambda}(\mathbb{I}) \ .
\end{align}
This forces $\lambda\equiv 1$. The conclusion is that $S$ contains no nontrivial algebraic tori. This forces $S=\{1\}$.
\end{proof}
 \begin{remark}
 \emph{In the conclusion of the proof we used that $G$ sits inside $SL$. If instead we let $G\leq GL$ then we find that $\lambda$ lies in the \emph{center}. }
 \end{remark}
\section{Appendix I : Background from K\"ahler Geometry}\label{append1}
In this paper we consider closed K\"ahler manifolds
\begin{align}  
(X^n,\om)  \ , \ n=\dim_{\mathbb{C}}(X) \ .
\end{align}
Recall that the K\"ahler form $\om$ is given locally by a Hermitian positive definite matirx of functions
  \begin{align}
  \om=\frac{\sqrt{-1}}{2\pi}\sum_{ i,j }g_{i\overline{j}}dz_{i}\wedge d\overline{z}_j  \ .
  \end{align}
The Ricci form of $\om$ is the smooth $(1,1)$ form on $X$ given by
\begin{align}
\begin{split}
 Ric(\om)&:= \frac{1}{2\pi\sqrt{-1}}\dl\dlb\log\det(g_{i\overline{j}}) \\
 \ \\
 &=\sum_{i,j}\frac{1}{2\pi\sqrt{-1}}R_{i\overline{j}}dz_{i}\wedge d\overline{z}_j \in \Gamma(\Lambda^{1,1}_X)
\end{split}
\end{align}
The scalar curvature is then the contraction of the Ricci curvature 
 \begin{align}
S(\om): =\sum_{i,j}g^{i\overline{j}}R_{i\overline{j}}\in C^{\infty}(X)\ .
 \end{align}  
\begin{align}
 V=\int_X\om^n \ , \ \mu=\frac{1}{V}\int_XS(\om)\om^n   \ .
 \end{align}
The space of K\"ahler metrics in the class $[\om]$ is given by
\begin{align}
\begin{split}
& \mathcal{H}_{\om}:=\{\vp\in C^{\infty}(X)\ |\ \om_{\vp}:=\om+\frac{\sqrt{-1}}{2\pi}\dl\dlb\vp  >0 \}  \ .
\end{split}
\end{align} 
  \begin{definition} (Mabuchi \cite{mabuchi}) 
 \emph{The \textbf{\emph{K-energy map}} $\nu_{\om}:\mathcal{H}_{\om}\ra \mathbb{R}$ is given by
\begin{align*}
\nu_{\om}(\varphi):=-\frac{1}{V}\int_0^1\int_{X}\dot{\varphi_{t}}(S(\om_{\varphi_{t}})-\mu)\omega_{t}^{n}dt 
\end{align*}
 $\vp_{t}$ is a $C^1$ path in $\mathcal{H}_{\om}$ satisfying $\vp_0=0$ , $\vp_1=\vp$ .}
 \end{definition}
\begin{remark} 
\emph{$\vp$ is a critical point for $\nu_{\om}$ iff  $ S(\om_{\varphi})\equiv \mu $ (a constant).}
\end{remark}

\begin{theorem}(Bando-Mabuchi \cite{bando-mabuchi87}) \emph{Assume that $X$ admits a K\"ahler Einstein metric. Then the K-energy map is bounded below on $\mathcal{H}_{\om}$.}
\end{theorem}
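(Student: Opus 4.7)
The plan is to follow Bando–Mabuchi via the continuity method, splitting the argument by the sign of $c_1(X)$. When $c_1(X)\le 0$ and $[\omega]$ is a rational multiple of $\pm c_1(X)$ (including the Calabi–Yau case $c_1=0$), existence of a KE is due to Aubin–Yau, and standard identities decompose $\nu_\omega$ into a non-negative relative entropy plus a controlled $I-J$ piece, yielding not merely a lower bound but properness. So the substantive case is $[\omega]\in c_1(X)$, i.e.\ the Fano case.

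In the Fano case, let $\omega_{KE}=\omega+\frac{\sqrt{-1}}{2\pi}\partial\bar\partial\psi_\infty$ be the given K\"ahler–Einstein metric, normalized so $\mathrm{Ric}(\omega_{KE})=\omega_{KE}$, and let $h_\omega\in C^\infty(X)$ be the Ricci potential, i.e.\ $\mathrm{Ric}(\omega)-\omega=\frac{\sqrt{-1}}{2\pi}\partial\bar\partial h_\omega$ with the normalization $\int_X(e^{h_\omega}-1)\omega^n=0$. Introduce Ding's functional $\mathcal{D}_\omega:\mathcal{H}_\omega\to\mathbb{R}$ defined by
\begin{align*}
\mathcal{D}_\omega(\varphi):=-\frac{1}{V(n+1)}\sum_{j=0}^{n}\int_X\varphi\,\omega^j\wedge\omega_\varphi^{n-j}-\log\!\left(\frac{1}{V}\int_X e^{h_\omega-\varphi}\omega^n\right).
\end{align*}
A one-line Jensen computation (applied to the measure $e^{h_\omega-\varphi}\omega^n$) shows that $\nu_\omega(\varphi)\ge\mathcal{D}_\omega(\varphi)$ for every $\varphi\in\mathcal{H}_\omega$, with equality precisely when $\omega_\varphi$ is KE.

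I would then show $\mathcal{D}_\omega$ attains its minimum at $\psi_\infty$ via the Aubin continuity path: for $t\in[0,1]$ solve $(\omega_{\psi_t})^n=e^{h_\omega-t\psi_t}\omega^n$, so that $\psi_1=\psi_\infty$ up to the Aut-orbit. Differentiating along the path, a direct computation using the continuity equation gives $\frac{d}{dt}\mathcal{D}_\omega(\psi_t)\le 0$, and hence $\mathcal{D}_\omega(\psi_\infty)\le\mathcal{D}_\omega(\psi_0)=0$. For an arbitrary $\varphi\in\mathcal{H}_\omega$, run the same continuity family after replacing the reference $\omega$ by $\omega_\varphi$: this connects $\varphi$ to some KE potential $\psi'_\infty$ lying in the $\mathrm{Aut}^o(X)$-orbit of $\psi_\infty$, and since both $\nu_\omega$ and $\mathcal{D}_\omega$ are invariant under that orbit after normalizing the additive constant, one concludes $\mathcal{D}_\omega(\varphi)\ge\mathcal{D}_\omega(\psi'_\infty)=\nu_\omega(\psi_\infty)$. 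Chaining this with the Jensen inequality gives $\nu_\omega(\varphi)\ge\nu_\omega(\psi_\infty)$ on all of $\mathcal{H}_\omega$, which is the required lower bound.

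The main obstacle is handling $\mathrm{Aut}^o(X)$ when it is positive-dimensional: non-uniqueness of the KE means the continuity method must be run in a slice transverse to the Aut-orbit of $\psi_\infty$ to guarantee uniform $C^0$ bounds as $t\to 1$. This is the technical heart of \cite{bando-mabuchi87}, relying on Matsushima reductivity together with a Lichnerowicz-type eigenvalue estimate along the path; but once that technical point is cleared, the lower-bound conclusion $\nu_\omega\ge\nu_\omega(\psi_\infty)$ is unaffected.
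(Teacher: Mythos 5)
This statement appears in the paper's Appendix I purely as quoted background, attributed to \cite{bando-mabuchi87}; the paper gives no proof of it, so there is nothing internal to compare your argument against. Judged on its own terms, your sketch follows the standard Bando--Mabuchi/Ding--Tian route (reduce to the Fano case, bound $\nu_\om$ below by the Ding functional via Jensen, then minimize the Ding functional at the K\"ahler--Einstein potential by running the Aubin continuity path backwards from $t=1$), and it correctly isolates the genuinely hard step, namely backward solvability at $t=1$ in the presence of a positive-dimensional $\mbox{Aut}^o(X)$, which is the bifurcation analysis occupying most of the original paper.

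Two endpoint details in your continuity-method step are stated imprecisely, though both are repairable. First, $\mathcal{D}_\om(\psi_0)=0$ is not correct as written: the $t=0$ solution of the path is the Calabi--Yau potential solving $\omega_{\psi_0}^n=e^{h_\om}\om^n$, not $\psi_0=0$. What is true, and what you actually need, is $\mathcal{D}_\om(\psi_0)\leq 0$, which follows from Jensen together with the elementary chain $\frac{1}{V}\int_X\vp\,\om_{\vp}^n\leq\frac{1}{(n+1)V}\sum_j\int_X\vp\,\om^j\wedge\om_\vp^{n-j}$. Second, for arbitrary $\vp$ the path based at $\om_\vp$ does not literally ``connect $\vp$ to a KE potential'': its $t=0$ endpoint is again a Calabi--Yau solution relative to $\om_\vp$, so you must insert the comparison $\mathcal{D}_{\om_\vp}(\psi_0)\leq 0=\mathcal{D}_{\om_\vp}(0)$ and then invoke the cocycle property of $\mathcal{D}$ to transfer the bound back to the base point $\om$. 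With those two repairs the chain $\nu_\om(\vp)\geq\mathcal{D}_\om(\vp)\geq\mathcal{D}_\om(\psi_\infty)=\nu_\om(\psi_\infty)$ closes, modulo the deferred technical core that you have explicitly flagged.
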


\begin{definition}\label{proper}(Tian \cite{tian97})
\emph{Let $(X,\om) $ be a K\"ahler manifold . The Mabuchi energy is \emph{\textbf{proper}} provided there exists constants $A>0$ and $B$ such that for all $\varphi\in \mathcal{H}_\om$ we have }
\begin{align}
\begin{split}
&\nu_{\om}(\vp)\geq A J_{\omega}(\varphi)+B  \\
 \ \\
 &  J_{\omega}(\varphi):= \frac{1}{V}\int_{X}\sum_{i=0}^{n-1}\frac{\sqrt{-1}}{2\pi}\frac{i+1}{n+1}\dl\varphi \wedge \dlb
\varphi\wedge \omega^{i}\wedge {\omega_{\varphi} }^{n-i-1}\ .
 \end{split}
\end{align}
\end{definition}

A deep analytic result in K\"ahler Einstein theory is the following theorem of Tian \footnote{One also needs the refinement from \cite{pssw2008} .}.

\begin{theorem}(Tian \cite{tian97}) \emph{Let $(X,\om) $ be a Fano manifold with $[\om]=C_1(X)$. Assume that $\mbox{Aut}(X)$ is finite. Then $X$ admits a K\"ahler Einstein metric if and only if the Mabuchi energy is proper.}
 \end{theorem}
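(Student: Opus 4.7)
The plan is to prove the two implications separately: the easy direction uses Bando--Mabuchi uniqueness together with a spectral gap, while the hard direction uses Aubin's continuity method driven by the properness hypothesis.

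For ($\Rightarrow$), suppose $X$ admits a K\"ahler--Einstein metric $\om_{\vp_{KE}}$ in the class $[\om]$. Since $\mbox{Aut}(X)$ is finite, $X$ carries no non-zero holomorphic vector fields, so by the Matsushima--Lichnerowicz theorem the first non-zero eigenvalue of $-\Delta_{\om_{\vp_{KE}}}$ is strictly greater than $1$. This gives strict positivity of the Hessian of $\nu_\om$ at $\vp_{KE}$, and combined with the Bando--Mabuchi uniqueness of the K\"ahler--Einstein potential in $\mathcal{H}_\om$ it yields a local coercive estimate $\nu_\om(\vp)\geq AJ_\om(\vp)+B$ in a $C^2$-neighborhood of $\vp_{KE}$. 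Globalization is then carried out via convexity of $\nu_\om$ along weak Mabuchi geodesics, or equivalently via the normalized Ricci flow convergence argument of Phong--Song--Sturm--Weinkove \cite{pssw2008}.

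For ($\Leftarrow$), assuming $\nu_\om$ is proper, I would apply Aubin's continuity method: solve
\[
\bigl(\om + \tfrac{\sqrt{-1}}{2\pi}\dl\dlb\vpt\bigr)^n = e^{h_\om - t\vpt}\om^n,\qquad t\in T\subseteq [0,1],
\]
where $h_\om$ is the Ricci potential of $\om$ normalized so that $\frac{1}{V}\int_X e^{h_\om}\om^n=1$. The set $T$ contains $0$ by Yau's solution of the Calabi conjecture. It is open in $[0,1)$: the linearization of the equation at $\vpt$ is $\Delta_{\om_{\vpt}}+t$, and the equation forces $\mathrm{Ric}(\om_{\vpt})\geq t\om_{\vpt}$, so by a Bochner estimate the first non-zero eigenvalue of $-\Delta_{\om_{\vpt}}$ strictly exceeds $t$ for $t<1$. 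The heart of the matter is closedness as $t\to 1^-$. Closedness reduces to a uniform $C^0$-bound on $\vpt$ (say under the normalization $\sup_X \vpt = 0$): given such a bound, Yau's $C^2$-estimate, Calabi's third-order estimate, and Schauder theory bootstrap to smooth convergence of a subsequence. To obtain the $C^0$-bound I would combine properness with a standard identity along Aubin's path, of the shape
\[
\nu_\om(\vpt) + (1-t)\bigl(J_\om(\vpt) - I_\om(\vpt)\bigr) = O(1),
\]
which, using the universal comparison $I_\om - J_\om \leq nJ_\om$, gives an upper bound $\nu_\om(\vpt)\leq C_0$ uniform in $t\in[0,1)$. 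Together with $\nu_\om(\vpt)\geq AJ_\om(\vpt)+B$ this produces a uniform bound on $J_\om(\vpt)$, which controls the oscillation of $\vpt$ via Kolodziej's $L^\infty$-estimate for the complex Monge--Amp\`ere equation with $L^p$ right-hand side. Passing to a smooth limit yields $\vp_1\in\mathcal{H}_\om$ solving $\mathrm{Ric}(\om_{\vp_1})=\om_{\vp_1}$.

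The main obstacle will be the delicate passage $t\to 1^-$: ensuring the $J_\om$-bound (and hence the $C^0$-bound) is uniform up to the boundary of the invertibility range of the linearization, rather than just on compact subsets of $[0,1)$. This is exactly the content of Tian's analysis in \cite{tian97}, sharpened by the Phong--Song--Sturm--Weinkove refinement \cite{pssw2008} invoked by the paper. Once those inputs are granted, the limit $\vp_1$ produced above is the desired K\"ahler--Einstein potential.
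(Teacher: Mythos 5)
The paper does not prove this statement: it is quoted verbatim in Appendix I as a theorem of Tian \cite{tian97}, with a footnote that the refinement of \cite{pssw2008} is also required, and it is used downstream as a black box (e.g.\ for Corollary \ref{existence}). So there is no internal proof to compare against; I am measuring your proposal against the cited literature. Your ($\Leftarrow$) direction is the standard continuity-method argument and is essentially sound, except for a sign slip in the pivotal energy identity. Along Aubin's path one has
\begin{align*}
\nu_{\om}(\vpt)\ \le\ -(1-t)\bigl(I_{\om}(\vpt)-J_{\om}(\vpt)\bigr)\ \le\ 0 ,
\end{align*}
since $I_{\om}-J_{\om}\ge 0$; this gives the uniform upper bound on $\nu_{\om}(\vpt)$ outright. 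Your version, $\nu_{\om}(\vpt)+(1-t)(J_{\om}-I_{\om})(\vpt)=O(1)$, has the $(I_{\om}-J_{\om})$ term on the wrong side, and the claimed $O(1)$ remainder (which in truth is $-\int_0^t(I_{\om}-J_{\om})(\vp_s)\,ds$) is bounded below only once one already knows the bound being sought; as written your display does not yield $\nu_{\om}(\vpt)\le C_0$. With the sign corrected, properness bounds $J_{\om}(\vpt)$, the $C^0$ and higher-order estimates bootstrap as you describe, and the limit is K\"ahler--Einstein. Note that the hypothesis that $\mbox{Aut}(X)$ is finite is not used in this direction at all.

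The genuine gap is in ($\Rightarrow$), which is the hard half. Strict positivity of the Hessian of $\nu_{\om}$ at $\vp_{KE}$ (from $\lambda_1>1$) together with Bando--Mabuchi uniqueness gives only a coercive estimate in a $C^2$-neighborhood of the K\"ahler--Einstein potential, and your proposed globalization ``via convexity of $\nu_{\om}$ along weak Mabuchi geodesics'' is not a step you can take for free: it requires the $C^{1,1}$ regularity of geodesics and the convexity of $\nu_{\om}$ along them, plus a compactness argument to convert a neighborhood estimate into $\nu_{\om}\ge AJ_{\om}+B$ on all of $\mathcal{H}_{\om}$ --- machinery that postdates, and is logically independent of, the theorem as attributed. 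Tian's actual argument runs the continuity method backwards from the K\"ahler--Einstein metric using a uniform spectral gap, and even so it produces only the weaker coercivity $\nu_{\om}(\vp)\ge c\,J_{\om}(\vp)^{\delta}-C$ for some $\delta\in(0,1)$; the \emph{linear} coercivity asserted in the statement (and needed elsewhere in the paper) is exactly the content of the \cite{pssw2008} refinement, proved there by a Moser--Trudinger/flow argument rather than by localizing at $\vp_{KE}$. Your outline names correct ingredients but does not assemble into a proof of this direction; you should either reproduce Tian's global continuity-method estimate plus the PSSW upgrade, or explicitly cite both as inputs rather than sketching a local-to-global argument that is not available.
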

 
  In this paper we are interested in the case where $[\om]$ is an \emph{integral} class, i.e. there is an ample line bundle $\mathbb{L}$ on $X$ and a Hermitian metric $h$ on $\mathbb{L}$ such that

 \begin{center} $ -\frac{\sqrt{-1}}{2\pi}\dl\dlb\log(h )=\om$ \ .\end{center}

In this situation Yau and Tian have conjectured that the existence of KE metrics should be related to a certain ``stability condition '' in the spirit of geometric invariant theory. In \cite{tian97}, Tian introduced the concept of K-stability and showed how it relates to the critical points of the Mabuchi energy through the projective models of $X$ furnished by high powers of the line bundle 
 \begin{align}
 X \xrightarrow{ }\mathbb{P}(H^0(X, \mathbb{L}^k)^{\vee}) \ .
\end{align}
To make this more precise one proceeds as follows. Given a basis $B:=< S_0,S_1,\dots, S_{N_k}>$  of $H^0(X, \mathbb{L}^k)$  it is not hard to see that
\begin{align}
\frac{1}{k}\rho_{k;B}:= \frac{1}{k}\log\Big(\sum_j|S_j|_{h^k}^2\Big) \in \mathcal{H}_{\om} \ .
\end{align}
 
Define the Bergman space ``at level $k$''
\begin{align}
\mathcal{B}_k:=\{\  \rho_{k,B}\ |\ \mbox{  $B$ is a basis of  $H^0(X, \mathbb{L}^k)$}\} \ .
\end{align}
 Fixing any basis $B_0$ we have that
\begin{align}
\mathcal{B}_k=\{\ \vps:=\rho_{k,\sigma\cdot B_0}\ |\ \sigma\in G:= SL(N_k+1,\mathbb{C}) \} \ .
\end{align}
Therefore there is a natural map 
\begin{align}
G \ra\mathcal{B}_k \ .
\end{align}
In this way we view the Mabuchi energy of $(X , {\om_{FS}}|_X)$ as a function on $G$
\begin{align}
\nu_{\om}(\sigma)=\nu_{\om}(\frac{1}{k}\vps) \ .
\end{align}
 In this paper we have simply let $k=1$, that is, we assume $\mathbb{L}$ is very ample and coincides with $\mathcal{O}(1)|_X$. 
 The importance of these spaces is brought out in the following result, due to G.Tian.
\begin{theorem}(Tian \cite{tian1990})
 \begin{align*}  
 \overline{\bigcup_{k\geq 1}\frac{1}{k}\mathcal{B}_k}=\mathcal{H}_{\om}\quad \mbox{(closure in $C^2$ topology on metrics)}\ .
\end{align*}
\emph{Precisely, given any $\varphi\in\mathcal{H}_{\om}$ there is a sequence of $\frac{1}{k}\rho_{k}(\varphi) \in \frac{1}{k}\mathcal{B}_k$ converging to $\varphi$ in the $C^2$ topology\footnote{It is now known that the convergence takes place in the smooth topology \cite{ruan}, \cite{zelditch} , \cite{catlin}.}.}
\end{theorem}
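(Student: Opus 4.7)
The plan is to construct an explicit approximating sequence via Bergman kernels associated to the metric we want to approximate, and then read off $C^2$ convergence from the asymptotic expansion of the Bergman function on the diagonal. Fix $\vp\in\mathcal{H}_{\om}$. Twist the reference Hermitian metric on $\mathbb{L}$ by $e^{-\vp}$ to obtain $h_{\vp}:=he^{-\vp}$, whose curvature is exactly $\om_{\vp}>0$. For each $k\geq 1$, endow $H^0(X,\mathbb{L}^k)$ with the $L^2$ inner product
\begin{align*}
\langle S,T\rangle_k:=\int_X \langle S,T\rangle_{h_{\vp}^k}\,\frac{\om_{\vp}^n}{n!},
\end{align*}
and let $B_k(\vp)=\{S_0^{(k)},\dots,S_{N_k}^{(k)}\}$ be an orthonormal basis. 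The candidate approximant is $\vp_k:=\frac{1}{k}\rho_{k,B_k(\vp)}$, possibly normalized by a constant.

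The key step is the on-diagonal Bergman kernel expansion associated to $(\mathbb{L},h_{\vp})$:
\begin{align*}
\mathcal{B}_k(\vp)(x):=\sum_{j=0}^{N_k}|S_j^{(k)}(x)|^2_{h_{\vp}^k}=k^n+a_1(\om_{\vp})(x)\,k^{n-1}+a_2(\om_{\vp})(x)\,k^{n-2}+\cdots,
\end{align*}
where the coefficients $a_i$ are universal polynomials in the curvature of $\om_{\vp}$ and its derivatives, and the expansion holds in the $C^\ell$ topology for every $\ell$. Since $|S|^2_{h^k}=|S|^2_{h_{\vp}^k}e^{k\vp}$, we have
\begin{align*}
\rho_{k,B_k(\vp)}(x)=k\vp(x)+\log\mathcal{B}_k(\vp)(x),
\end{align*}
hence
\begin{align*}
\vp_k(x)-\tfrac{n\log k}{k}=\vp(x)+\tfrac{1}{k}\log\!\Big(1+\tfrac{a_1(\om_{\vp})(x)}{k}+O(k^{-2})\Big)=\vp(x)+O(k^{-2})
\end{align*}
uniformly on $X$, with the $O(k^{-2})$ controlled in $C^2$ (in fact $C^\ell$) by the $C^{\ell+2}$ version of the expansion. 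Subtracting the constant $\frac{n\log k}{k}$ does not alter the associated Kähler form, so the conclusion is that $\om_{\vp_k}\to\om_{\vp}$ in the $C^2$ topology on metrics.

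The main obstacle is establishing the Bergman kernel expansion with uniform $C^2$ (really $C^{2+\alpha}$) control of the remainder. Tian's original argument proceeds via \emph{peak sections}: for each point $x\in X$ and each large $k$, one uses Hörmander's $L^2$ $\dlb$-estimates, applied with the strictly plurisubharmonic weight $k\vp+(n+2)\log|z-x|^2$ against local holomorphic trivializations, to produce sections of $\mathbb{L}^k$ that approximate Gaussian peaks centered at $x$ up to errors of order $k^{-\infty}$. Orthogonalizing these peak sections against the rest of $H^0(X,\mathbb{L}^k)$ and summing their pointwise norms yields the leading $k^n$ term; the subleading $a_i$ coefficients come from expanding the Kähler potential $\vp$ to higher order in Bochner coordinates at $x$. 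The uniformity in $x\in X$ needed for $C^2$ convergence is the delicate part, and it is here that one must invoke either Tian's original estimates, the subsequent refinements of Ruan, Catlin, and Zelditch, or the off-diagonal parametrix construction via the $\dlb$-Neumann problem. Granted this input, the proof reduces to the elementary Taylor expansion of the logarithm displayed above.
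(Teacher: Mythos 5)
The paper offers no proof of this statement: it is quoted as Tian's theorem with a citation to \cite{tian1990}, and the footnote points to \cite{ruan}, \cite{zelditch}, \cite{catlin} for the $C^\infty$ refinement. Your outline is a correct account of the standard argument and is essentially the proof you would find by unwinding those references, so there is no gap in substance; the one point to be careful about is attribution and what strength of input you actually need. The full on-diagonal expansion $\mathcal{B}_k(\varphi)=k^n+a_1k^{n-1}+a_2k^{n-2}+\cdots$ with $C^\ell$ control of the remainder is the later Catlin--Zelditch theorem (with Lu's computation of the $a_i$ and Ruan's $C^\infty$ convergence), not what Tian proved in 1990; his peak-section construction via H\"ormander's $L^2$ estimates with the weight $k\varphi+(n+2)\log|z-x|^2$ yields only the leading behaviour $\mathcal{B}_k(\varphi)=k^n\bigl(1+O(k^{-1/2})\bigr)$ with enough uniformity in $x$ to conclude $C^2$ convergence of the metrics, which is exactly the statement being proved here --- so your reduction to the Taylor expansion of the logarithm goes through with the weaker input, and you do not need the $a_i$ at all for the $C^2$ claim. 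Two small bookkeeping remarks: your identity $\rho_{k,B_k(\varphi)}=k\varphi+\log\mathcal{B}_k(\varphi)$ is right and is the crux of why the orthonormal basis for the $\varphi$-twisted $L^2$ product is the correct choice of $B$; and the subtraction of the constant $\tfrac{n\log k}{k}$ is actually unnecessary for the convergence of the potentials since that term already tends to $0$, though you are correct that in any case additive constants do not change the associated K\"ahler forms (if one insists on the paper's normalization $\sigma\in SL(N_k+1,\mathbb{C})$, one absorbs the constant by rescaling the basis, which again only shifts $\rho_{k,B}$ by a constant).
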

It is not hard to see that as $k\ra \infty$ we have
\begin{align}
\begin{split}
& \nu_{\om_k}(\rho_{k}(\varphi))\ra \nu_{\om}(\varphi) \\
\ \\
&J_{\om_k}(\rho_{k}(\varphi))\ra  J_{\om}(\varphi) \ .
\end{split}
\end{align}
Where we have defined $\om_k$ by
\begin{align}
\om_k=k\om_{FS}+\frac{\sqrt{-1}}{2\pi}\dl\dlb\log\Big(\sum_j|S_j|_{h^k}^2\Big) \qquad \{S_j\}_{j=0}^{N_k} \ \mbox{a unitary basis}\ .
\end{align}

This justifies Corollary \ref{existence} .
\section{Appendix II: Hilbert-Mumford vs. Pairs}\label{append2}
We close this paper with a direct comparison of the Hilbert-Mumford (Semi)stability and the (Semi)stability of Pairs. We hope that the table below makes the relationship between the two theories completely transparent. 

 \begin{center} \begin{tabular}{l|l}
  \textbf{Hilbert-Mumford Semistability}& \textbf{Dominance}/\textbf{Semistability of Pairs} \\ \\
\hline \\
 For all $T\leq G$ $\exists\ d\in \mathbb{Z}_{>0}$ and & For all $T\leq G$ and $\chi\in\mathscr{A}_T(v)$\\
 $f\in \mathbb{C}_{\leq d}[\ \mathbb{W}\ ]^T$ such that   & $\exists\ d\in \mathbb{Z}_{>0}$ and $f\in  \mathbb{C}_d[\ \mathbb{V}\oplus\mathbb{W}\ ]^T_{d\chi}$  \\
   $f(w)\neq 0$ and $f(0)=0$ & such that $f((v,w))\neq 0$ and $f|_{ \mathbb{V}}\equiv 0$ \\  \\
 \hline \\
 $0\notin\overline{G\cdot w}$ & \ $\overline{\mathcal{O}}_{vw}\cap\overline{\mathcal{O}}_{v}=\emptyset$  \\ \\
 \hline \\
 $w_{\lambda}(w)\leq 0$   &\ $w_{\lambda}(w)-w_{\lambda}(v)\leq 0$   \\
 for all 1psg's $\lambda$ of $G$ & \ for all 1psg's $\lambda$ of $G$\\ \\
 \hline \\
 $0\in \mathcal{N}(w)$ all $T\leq G$ &\ $\mathcal{N}(v)\subset \mathcal{N}(w)$ {all $T\leq G$} \\ \\
 \hline \\
 $\exists$ $C\geq 0$ such that &\ $\exists$ $C\geq 0$ such that \\
 $\log||\sigma\cdot w||^2\geq -C$   &\ $\log {||\sigma\cdot w||^2}-\log{||\sigma\cdot v||^2}  \geq -C $\\  
  all $\sigma\in G$ &\ all $\sigma\in G$ \\ \\
  \hline\\
  $G\cdot$ \emph{closed} and $G_{w}$ finite & $\exists m\in\mathbb{N}$ such that $(\mathbb{I}^q\otimes v^m,w^{m+1})$ is semistable \\
 \end{tabular} 
  \end{center} 
 \ \\
 \begin{center}\small{Table 1.}\end{center}
 \begin{center}\textbf{Acknowledgements}\end{center}
The author thanks Gang Tian, whose influence is present on virtually every page.
This work was carried out while the author was visiting the Mathematics Department of the Massachusetts Institute of Technology. The author thanks his host, Professor Tomasz Mrowka, for the kind invitation and for many stimulating discussions on the topic of the paper. The key concept of a \emph{strictly stable} pair was inspired by a remark of Dr. Song Sun's. This work was supported in part by a National Science Foundation DMS grant no. 1104448.

 \bibliography{ref}
  \end{document}